\documentclass{amsart}

\usepackage[english]{babel}
\usepackage{hyperref}
\usepackage{enumitem}
\usepackage{graphicx}
\usepackage{xcolor} 

\newtheorem{theorem}{Theorem}[section]
\newtheorem{definition}[theorem]{Definition}
\newtheorem{proposition}[theorem]{Proposition}
\newtheorem{corollary}[theorem]{Corollary}
\newtheorem{lemma}[theorem]{Lemma}
\newtheorem{example}[theorem]{Example}
\newtheorem{step}{Step}
\theoremstyle{remark}
\newtheorem{remark}[theorem]{Remark}

\numberwithin{equation}{section}

\DeclareMathOperator{\f}{\mathcal{F}} 
\DeclareMathOperator{\bb}{B} 
\DeclareMathOperator{\ga}{\mathsf{a}} 
\DeclareMathOperator{\g}{\mathsf{g}} 
\DeclareMathOperator{\tub}{\mathrm{Tub}} 
\DeclareMathOperator{\vt}{\mathcal{V}} 
\DeclareMathOperator{\hz}{\mathcal{H}} 
\DeclareMathOperator{\e}{\mathbf{e}} 
\DeclareMathOperator{\vol}{vol} 

\begin{document}

\title[Foliations of $(\alpha,\beta)$-spaces]{On Singular Finsler Foliations of $(\alpha,\beta)$-spaces}

\author[M. M. Alexandrino]{Marcos M. Alexandrino}
\address{Marcos M. Alexandrino \hfill\break\indent Instituto de Matem\'{a}tica e Estat\'{\i}stica, Universidade de S\~ao Paulo \hfill\break\indent Rua do Mat\~ao, 1010, 05508-090 S\~ao Paulo, S\~ao Paulo, Brazil}
\email{malex@ime.usp.br}

\thanks{This project was partially funded by FAPESP (S\~ao Paulo Research Foundation); namely, Alexandrino is supported by grant No. 2022/16097-2, and Mar\c cal by grant No. 2024/03446-4.}

\author[B. O. Alves]{Benigno O. Alves}
\address{Benigno O. Alves \textup{(corresponding author)} \hfill\break\indent Instituto de Matem\'{a}tica e Estat\'{\i}stica, Universidade Federal da Bahia \hfill\break\indent Rua Bar\~ao de Jeremoabo, 40170-115 Salvador, Bahia, Brazil}
\email{benignoalves@ufba.br}

\author[P. Mar\c cal]{Patr\'icia Mar\c cal}
\address{Patr\'icia Mar\c cal \hfill\break\indent Instituto de Matem\'{a}tica e Estat\'{\i}stica, Universidade de S\~ao Paulo \hfill\break\indent Rua do Mat\~ao, 1010, 05508-090 S\~ao Paulo, S\~ao Paulo, Brazil}
\email{pmarcal@usp.br}

\date{\today}

\keywords{$(\alpha,\beta)$-metrics, Finsler foliations, equifocality}

\begin{abstract}
We investigate singular Finsler foliations (SFFs) on a manifold equipped with an $(\alpha,\beta)$-metric. To be precise, we verify that any SFF of an $(\alpha, \beta)$-space is, under some hypotheses on the metric, a singular Riemannian foliation (SRF). This gives a partial answer to the general question ``under which conditions a SFF is a SRF with respect to some Riemannian metric''. Moreover, we extend the proof of Molino's conjecture to SFFs whenever they are also a SRFs. Finally, we prove equifocality of the regular leaves for a SFF under the same condition.
\end{abstract}

\maketitle
\bibliographystyle{abbrv}


\section{Introduction}

Finsler metrics on a smooth manifold $M$ correspond to positive Lagrangians on $TM$ that are smooth away from zero section, positively homogeneous of degree one, and whose fundamental tensor is positive definite. They include the norm associated to any Riemannian metric. However, while Riemannian metrics model the dynamics of isotropic conservative mechanical systems \cite{Jacobi1837reduction, Godinho2014introduction}, Finsler metrics have the potential to describe anisotropic systems, with applications to several areas such as physics, biology, and robotics \cite{Javaloyes2023general, Ratliff2021generalized, Brody2015solution, Yajima2009finsler, Antonelli1993theory}.

Among the most widely studied examples of Finsler metrics are $(\alpha,\beta)$-metrics, i.e. those that can be written as
\begin{equation*}
F=\alpha\,\phi\!\left(\frac{\beta}{\alpha}\right) ,
\end{equation*}
for a Riemannian norm $\alpha$, a $1$-form $\beta$, and some positive smooth function $\phi$. They stand out for their computability, and can be geometrically characterized as the most symmetric non-Riemannian Finsler metrics (pointwise); see \cite[Theorem~2.2]{Yu2011new} or Proposition~\ref{AlphaBetaSymmetry}. In fact, the isometries of $(\alpha,\beta)$-spaces have been classified; see \cite{Voicu2023finsler} and references therein. By means of a coordinate-free approach, we recover the classification of isometries for $(\alpha,\beta)$-spaces under the assumption that $\phi^{\prime}$ never vanishes; vide Corollary~\ref{AlphaBetaIsometry}. The key point, from Proposition~\ref{AlphaBeta!Phi}, is that an $(\alpha,\beta)$-metric is uniquely determined by the pair $(\alpha,\beta)$  when $\phi$ is strictly monotone.

The class of $(\alpha, \beta)$-metrics trivially includes Riemannian norms (e.g. $\phi$ constant), but it encompasses several other important families of Finsler metrics, for instance, Randers metrics \cite{Randers1941asymmetrical}, and Matsumoto (or slope) metrics \cite{Matsumoto1989slope}. Historically, $(\alpha, \beta)$-metrics were proposed by Matsumoto as direct generalization of Randers metrics \cite{Matsumoto1972creducible}, which are considered the simplest non-Riemannian Finsler metrics. These are of the form $R = \alpha +\beta$, and their geodesics naturally appear as solutions of the \emph{Zermelo navigation problem} \cite{Bao2004zermelo}. In particular, the navigation model for Randers spaces has been related to the modeling of wildfire spread \cite{Markvorsen2016finsler}. In addition, the influence of the slope on fire-growth can be modeled by a Matsumoto metric \cite{Javaloyes2023general}.

These geometric and physical motivations led us to examine the global geometry, in which foliation theory plays a central role; thereupon our main subject. Besides, foliations provide a useful framework for studying wave fronts. Case in point, a wave propagating at constant speed along its fronts in a Riemann-Finsler space is governed by an isoparametric function, whose level sets form a foliation with leaves of constant mean curvature; refer to \cite{Alves2024isoparametric} and bibliographical sources cited within.

Singular Riemannian foliations (SRFs) encode many symmetry phenomena: they unify isometric group actions, Riemannian submersions, and partitions of a fiber bundle by the action of an holonomy groupoid of a metric connection. Their analogues, singular Finsler foliations (SFFs), are groundwork for orbits of Finsler actions, fibers of Finsler submersions, and level sets of transnormal functions \cite{Alexandrino2019singular}.

A SFF $\{ L_p \}_{p \in M}$ on a (complete) Finsler manifold $(M,F)$ is a singular foliation whose leaves are locally equidistant, i.e. geodesics starting orthogonally at one leaf must be orthogonal to all leaves they meet. Associated with the question posed by Ghys in \cite[Appendix~E]{Molino1988riemannian}, there is the problem to determine whether a singular Finsler foliation (SFF) is a singular Riemannian foliation (SRF) with respect to some Riemannian metric. The affirmative answer has already been given for regular foliations \cite{Jozefowicz2008finsler, Miernowski2006lift, Popescu2019lagrangians} and for SFFs of Randers spaces \cite{Alexandrino2019singular}. In the present article, we characterize SFFs on $(\alpha,\beta)$-spaces such that the reply is positive for the Riemannian norm $\alpha$; c.f. \cite[Theorem~1.1]{Alexandrino2019singular}. Moreover, we find sufficient conditions for the same response with respect to a Riemannian norm conformal to $\alpha$.

\begin{theorem}
\label{AlphaBetaSFF}
Let $\f$ be a singular Finsler foliation with (locally) closed leaves of an $(\alpha,\beta)$-space $(M,F)$, where $F = \alpha \phi \big( \tfrac{\beta}{\alpha} \big)$ for a Riemannian norm $\alpha(\cdot) = \sqrt{\ga(\cdot,\cdot)}$ and a $1$-form $\beta$ on $M$. Let $\bb$ denote the $\alpha$-dual vector field to $\beta$.
\begin{enumerate}[label=\textup{(\roman*)}]
\item\label{f1} If $\phi: (-b_0 , b_0) \to \mathbb R$ has nowhere zero derivative and $\bb$ is $\alpha$-orthogonal to $\f$, then $\mathcal F$ is a SRF with respect to $\ga$, and the vector field $\bb$ is foliated (i.e. its flow locally maps one leaf to another) and horizontal (i.e. $F$-orthogonal to the leaves).

\item\label{f2} If $\bb$ is never zero and tangent to the leaves of $\mathcal F$, then there exists a smooth function $\kappa: M\to \mathbb R$ such that $\mathcal F$ is SRF with respect to $\kappa\ga$.
\end{enumerate}
\end{theorem}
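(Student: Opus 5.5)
A SFF is a SRF for a Riemannian metric $\g$ precisely when the leaves are locally equidistant for $\g$. Equivalently, on the regular stratum each $\g$-geodesic that is $\g$-orthogonal to one leaf stays $\g$-orthogonal to all leaves it meets. By density of regular leaves and the standard closure argument, this extends to the singular stratum. The plan is to translate the known properties of $F$-horizontal geodesics into properties of $\g$-horizontal geodesics, using the explicit Legendre transform of an $(\alpha,\beta)$-metric. Recall that the fundamental tensor gives
\[
g_v(v,w)=\tfrac12\,\partial_t F^2(v+tw)|_{t=0}=\phi(s)\big(\phi(s)-s\phi'(s)\big)\ga(v,w)+\alpha(v)\phi(s)\phi'(s)\,\beta(w),\qquad s=\beta(v)/\alpha(v).
\]
So $v$ is $F$-orthogonal to $T_pL$ iff $\big(\phi-s\phi'\big)\ga(v,\cdot)+\alpha(v)\phi'\beta$ annihilates $T_pL$.

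\textbf{Case (i).} Since $\bb\perp_{\ga}\f$, $\beta$ annihilates $T\f$. Hence $\ga$-orthogonality of $v$ to $T_pL$ implies $F$-orthogonality. Note $\phi-s\phi'>0$ for any Finsler $(\alpha,\beta)$-metric. The converse holds as well, so the normal cones coincide: $\nu_pL^{F}=\nu_pL^{\ga}$, which contains $\bb$. In particular $\bb$ is horizontal.

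Next I show that $\bb$ is foliated. By Proposition~\ref{AlphaBeta!Phi}, when $\phi'\ne0$ the pair $(\alpha,\beta)$ is determined by $F$. Local isometries of $F$ that preserve the foliation therefore preserve $\ga$ and $\beta$. The more robust route is to use the local transverse description of a SFF: locally $\f$ is given by the fibers of a Finsler submersion (on the regular part, via a local quotient $\pi:U\to \Sigma$ carrying a transverse Finsler metric $\check F$). The horizontal norms $F|_{\nu L}$ project to $\check F$. The restriction of $F$ to the normal space $\nu_pL$ is the $(\alpha,\beta)$-norm built from $\ga|_{\nu}$ and $\beta|_{\nu}$, with $\bb\in\nu_pL$. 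Applying Proposition~\ref{AlphaBeta!Phi} on the quotient, $\ga|_\nu$ and $\beta|_\nu$ are determined by $\check F$. Hence they are basic: independent of the point along the leaf.

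Basicness of $\ga|_{\nu}$ says $\pi$ is a Riemannian submersion for $\ga$, so $\f$ is a Riemannian foliation on the regular part. Basicness of $\beta|_\nu$ says $\bb$ projects to a vector field on $\Sigma$, which is exactly the statement that $\bb$ is foliated. Density of the regular stratum and the closure results for SRFs then extend the conclusion to all of $M$.

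The main obstacle is justifying that an $F$-Finsler submersion restricts horizontally to the $(\alpha,\beta)$-structure in a way that lets Proposition~\ref{AlphaBeta!Phi} apply. This needs $\phi$ to be defined on the full range of $\beta/\alpha$ over $\nu$, which holds because $\bb\in\nu$.

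\textbf{Case (ii).} Now $\bb$ is tangent to the leaves, so $\beta(v)=0$ for every $\ga$-horizontal $v$. The formula then gives that $g_v(v,\cdot)=\phi(0)^2\ga(v,\cdot)$ on $T_pL$ is zero. Hence $\ga$-horizontal vectors are $F$-horizontal, with $F(v)=\phi(0)\alpha(v)$. Running the same dimension argument gives equality of the normal cones, and on them $F=\phi(0)\alpha$. The transverse metric of $\f$, computed from $F$ on horizontal vectors, is therefore $\phi(0)^2\ga|_\nu$, which is basic. Thus $\f$ is a SRF for $\ga$ itself, and I can take $\kappa\equiv\phi(0)^2$ (any positive constant works); the only subtlety is the singular stratum, handled as before.
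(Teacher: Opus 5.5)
\textbf{Case (ii) contains an error that invalidates its conclusion.} For a $\ga$-horizontal $v$ you do get $s=0$. But then $\ell^F_v(w)=\phi(0)^2\ga(v,w)+\alpha(v)\phi(0)\phi'(0)\beta(w)$, and the second term does not vanish for $w\in T_pL$. The form $\beta$ annihilates the $\ga$-horizontal space $\hz_p$, not $T_pL$; indeed $\beta(\bb)=\alpha(\bb)^2\neq0$ with $\bb\in T_pL$. So $\ga$-horizontal vectors are $F$-horizontal only when $\phi'(0)=0$.

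In general, write $v=y+v^{\top}$ with $y\in\hz_p$ and $v^{\top}\in T_pL$. Lemma~\ref{AlphaBetaLegendre} shows that $v$ is $F$-horizontal iff $v^{\top}=-\tfrac{\alpha(v)\phi'(s)}{\lambda(s)}\,\bb$. This forces $s=\beta(v)/\alpha(v)$ to satisfy $s\lambda(s)+\alpha(\bb)^2\phi'(s)=0$. Lemma~\ref{BetaOverAlphaCte} guarantees that this value $h(p)$ is the same on the whole cone, and then $F^2(v)=\rho(h(p))\,\alpha(y)^2$. Since $h(p)$ depends on $\alpha(\bb(p))$, which may vary along a leaf, the transverse metric is $\rho(h)\,\ga|_{\hz}$ and $\kappa=\rho\circ h$ is genuinely non-constant. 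This is item (iii) of Proposition~\ref{AlphaBetaSubm}.

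Here is a concrete counterexample to your claim. On $\mathbb R^2$ take the Randers metric ($\phi(s)=1+s$) with $\ga=dx^2+(1-b^2)^{-1}dy^2$ and $\beta=b\,dx$, where $b=b(x)$ is non-constant with $\tfrac14\le b\le\tfrac34$.
\begin{itemize}
\item $\bb=b\,\partial_x$ is nonvanishing and tangent to the lines $y=c$.
\item Minimizing $F$ over the lifts of $w\,\partial_y$ gives $\sqrt{1-b^2}\,(1-b^2)^{-1/2}|w|=|w|$. So $(x,y)\mapsto y$ is a Finsler submersion onto $(\mathbb R,|dy|)$, and the lines form a Finsler foliation.
\item The $\ga$-distance from $(x_0,0)$ to the line $y=c$ is, to first order in $c$, $|c|(1-b(x_0)^2)^{-1/2}$, which depends on $x_0$.
\end{itemize}
So this foliation is not Riemannian for $\ga$ or any constant multiple of it. It is Riemannian for $(1-b^2)\ga=\rho(h)\,\ga$, with $h=-\alpha(\bb)^2$.

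\textbf{Separately, the passage from the regular stratum to $M$ is unjustified.} In both cases you argue by ``density and the standard closure argument''. Density alone does not give this. The orbits of $y\,\partial_x$ on Euclidean $\mathbb R^2$ form a singular foliation that is Riemannian on the regular stratum $\{y\neq0\}$. Yet the line of slope $1$ through the origin leaves a point-leaf orthogonally and then crosses the lines $y=c$ at $45^\circ$. So the SFF structure must be used.

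The paper does this through Proposition~\ref{SFFstratCriterion}, which requires every stratum, not only the regular one:
\begin{itemize}
\item It applies Proposition~\ref{AlphaBetaSubm} to $\f|_{\Sigma}$ on each stratum $\Sigma$, with the restricted $(\alpha,\beta)$-metric.
\item In case (i) it uses Lemma~\ref{LinearTangencyMinStrat} and the Slice Theorem~\ref{SliceThm} to get $\bb$ tangent to the strata. This makes $\bb$ itself, not only its projection to $T\Sigma$, foliated.
\item In case (ii), Proposition~\ref{AlphaBetaSFFVert} shows that $h$, hence $\kappa$, is smooth across strata, where the dimension of the normal cone jumps.
\end{itemize}

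Apart from this, your case (i) argument is exactly the paper's proof of item (ii) of Proposition~\ref{AlphaBetaSubm}. The $F$- and $\ga$-normal cones coincide because $\beta$ kills $T\f$, and Proposition~\ref{AlphaBeta!Phi} on the quotient then makes $\ga|_{\nu}$ and $\beta|_{\nu}$ basic.
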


The proof of Theorem \ref{AlphaBetaSFF} is based on understanding 
Finsler submersions of $(\alpha,\beta)$-spaces; vide Proposition~\ref{AlphaBetaSubm}.

\begin{remark}
\label{TangencyToStrata}
Under the hypothesis of nonvanishing $\phi^{\prime}$ alone, the vector field $\bb$ is necessarily tangent to the strata of $\f$; from Lemma~\ref{LinearTangencyMinStrat} and the Slice Theorem~\ref{SliceThm}.
\end{remark}

We now disclose an example on how item~\ref{f1} of Theorem~\ref{AlphaBetaSFF} fails when the nonvaninshing of $\phi^{\prime}$ is dropped as hypothesis.

\begin{example}
\label{AlphaBetaSFnotR}
Let $\alpha$ be a Euclidean norm and $\beta$ a linear form on $\mathbb R^n$. If $\alpha^{\ast}(\beta) < 1$, then $F = \tfrac{\alpha^2 + \beta^2}{\alpha}$ is a well-defined $(\alpha,\beta)$-metric on $\mathbb R^n$, where $\phi(s) = s^2 + 1$. Once the metric is reversible, the set of indicatrices, $\f = \{ \mathcal I^F (r) \, \vert \, r \geq 0 \}$, is a singular Finsler foliation for $(\mathbb R^n,F)$. Whenever $\beta$ is nonvanishing, its $\alpha$-dual vector $\bb$ fails to be tangent to the minimum stratum, which is a point (the origin). In this case, the metric $F$ is not Riemannian and, moreover, the indicatrix $\mathcal I^F (1)$ is not quadratic. Hence, $\f$ is not a singular Riemannian foliation for $(\mathbb R^n,\alpha)$.
\end{example}

Next, we should stress that a converse for Theorem~\ref{AlphaBetaSFF}, item~\ref{f1}, is true.

\begin{remark}
\label{AlphaBetaConverse}
Consider a Riemannian manifold $(M,\g)$.
\begin{enumerate}[label=\textup{(\roman*)}]
\item Let $\f$ be a SRF on $(M,\g)$, and

\item \label{PhiCond} let $\phi: (-b_0 , b_0) \to \mathbb R$ be a positive smooth function that has nowhere zero derivative and satisfies 
\begin{equation*}
(\phi(s) - s\phi^{\prime}(s)) + (b^2 - s^2)\phi^{\prime\prime}(s) > 0 , 
\end{equation*}
for all $s$ and $b$ with $\vert s \vert \leq b < b_0$.

\item Suppose $\bb$ is a foliated vector field $\g$-orthogonal to $\f$ with $\sqrt{\g(\bb,\bb)} < b_0$.
\end{enumerate}
Set  $\alpha(\cdot) := \sqrt{\g(\cdot,\cdot)}$ and $\beta(\cdot) := \g(\bb,\cdot)$. Then, $\f$ is a SFF with respect to the $(\alpha,\beta)$-metric $F = \alpha \phi \big( \tfrac{\beta}{\alpha} \big)$.
\end{remark}

The observation above allows us to directly adapt the argument presented in \cite{Alexandrino2019singular} to construct infinitely many non-homogeneous SFFs with respect to an $(\alpha,\beta)$-metric on the sphere. For clarity, let us recall the argument here.

\begin{example}
\label{SphereSFF}
In \cite{Radeschi2014clifford}, Radeschi constructed a polynomial map $\pi_{C}:\mathbb{S}^{2l-1}\to \mathbb{R}^{m+1}$ such that, for $l > m+1$ and $m \neq 1,2,4$, the fibers of $\pi_{C}$ are (non-homogeneous)
leaves of a SRF $\f$ on the round sphere $\mathbb{S}^{2l-1}$, whose leaf space is the disk $\mathbb{D}_{C}$. Let $\phi$ be a positive smooth function that satisfies the conditions of item~\ref{PhiCond} in Remark~\ref{AlphaBetaConverse}, and $\bar{\bb} \in \mathfrak{X}(\mathbb{D}_{C})$ a small radial vector field on the disk $\mathbb{D}_{C}$ that is zero at the center and zero near the border $\partial \mathbb{D}_{C}$. Consider its orthogonal lifted vector field $\bb$, i.e. the orthogonal foliated vector field on $\mathbb{S}^{ 2 l-1}$ that projects to $\bar{\bb}$. It follows from Remark~\ref{AlphaBetaConverse} that $\f$ is SFF with respect to the Finsler norm $F=\alpha \phi \big( \tfrac{\beta}{\alpha} \big)$, where $\beta$ is the (Riemannian) dual of $\bb$.
\end{example}

SRFs possess several interesting dynamical and topological properties. At the present, we concern ourselves with the following: \emph{closing the leaves of a SRF results in a partition that is still a SRF}. In other words, if $\f = \{ L \}$ is a SRF, then the partition $\overline{\f} = \{ \overline{L} \}$, given by the closures of the leaves of $\f$, is again a SRF. This property was described in the theory of SRFs as \emph{Molino's conjecture} \cite{Molino1988riemannian}, and proved in \cite{Alexandrino2017closure}. It is not only elegant, and helpful in understanding the dynamics of a foliation, but also useful in analysis. As a matter of fact, the basic functions (i.e. smooth functions that are constant on the leaves) of $\f$ coincide with the basic functions of $\overline{\f}$. In general, it is much easier to deal with a foliation whose leaves are closed, among other reasons, because the leaf space is Hausdorff. The proof of Molino's conjecture \cite{Alexandrino2017closure} yields the next result on singular Finsler foliations.

\begin{proposition}
\label{MolinoSFF}
Let $\f=\{L\}$ be a singular Finsler foliation with locally closed leaves  on a complete Finsler manifold $(M,F)$. If $\f$ is a singular Riemannian foliation with respect to a complete Riemannian norm $\alpha$ on $M$, then the partition by closures $\overline{\f}=\{\overline{L}\}$ is a singular Finsler foliation.
\end{proposition}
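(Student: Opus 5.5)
Since $\f$ is a singular Riemannian foliation for the complete metric $\alpha$, the result of \cite{Alexandrino2017closure} applies and $\overline{\f}$ is a singular Riemannian foliation for $\alpha$. In particular, $\overline{\f}$ is a singular foliation: its leaves $\overline{L}$ are (locally closed) embedded submanifolds, and smooth vector fields tangent to them span their tangent spaces. What is left is the Finsler transnormality of $\overline{\f}$ with respect to $F$.

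I will use the characterization of SFFs by local equidistance of leaves, or equivalently by the condition that $F$-geodesics orthogonal to one leaf stay orthogonal to all leaves they meet (see \cite{Alexandrino2019singular}). By this characterization, it suffices to show the following: for every $q\in M$ and every small $\epsilon$, the $F$-distance tube $\tub_\epsilon(\overline{L}_q)$ is a union of closures of leaves. Equivalently, the function $x\mapsto d_F(\overline{L}_q,x)$ is constant on each $\overline{L}_x$ near $\overline{L}_q$.

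\textbf{Step 1.} Local equidistance for $\f$ under $F$ gives $d_F(L_q,\cdot)$ constant on leaves near $L_q$. I extend this to closures through the identity
\[
d_F(\overline{L}_q,x)=\inf_{y\in \overline{L}_q}d_F(y,x),
\]
since every $y\in \overline L_q$ lies in the closure of $L_q$. I expect to show that $d_F(\overline{L}_q,x)=d_F(L_q,x)$ and that this value is constant along $L_x$, because $\f$ is an SFF for $F$. Continuity of $d_F$ then makes it constant along $\overline{L}_x$.

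\textbf{Step 2.} Being constant along closures is not enough by itself. I also need the distance to be uniform, that is, equidistance from a tube of uniform radius. This requires a uniform $\epsilon$ along $\overline{L}_q$ on which $F$-normal geodesics minimize distance to the leaf. Here I use that $\overline{L}_q$ is a locally closed embedded submanifold, a consequence of the Riemannian result, so it has a positive normal injectivity radius locally for $F$. Completeness of $F$ lets me argue with minimizing segments to the leaves.

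\textbf{Main obstacle.} The hardest step is this uniformity. The leaves $L$ are only locally closed and not closed, so $F$-equidistance holds in plaques of possibly nonuniform size along $L$. I would try first to transfer the uniform plaque structure from the Riemannian side. The closure $\overline{\f}$ admits slices and tubular neighborhoods for $\alpha$, and on a relatively compact piece of $\overline{L}_q$ the metrics $F$ and $\alpha$ are uniformly comparable. That comparability should give uniform $F$-tubes in which the orthogonal geodesics of $\f$ and of $\overline{\f}$ can be compared, finishing the argument.
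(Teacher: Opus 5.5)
\textbf{There is a genuine gap: the step that does the real work is asserted without a valid argument, and the step you flag as hardest is left open.}

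\emph{Step~1 (the real content).} Local equidistance of $\f$ controls the future and past distance to a \emph{plaque} $P_q$, and only for points of a trivializing neighborhood $U_q$. It says nothing directly about $f:=d_F(\overline{L}_q,\cdot)$, which is an infimum over all of $\overline{L}_q$. For a nonclosed $L_q$ that set accumulates on itself and contains other leaves, and ``leaves near $L_q$'' has no meaning. The identity $d_F(\overline{L}_q,x)=d_F(L_q,x)$ is trivial and is not the issue. What is missing is a global argument, for instance:
\begin{enumerate}
\item By completeness, take a minimizing $F$-geodesic $\gamma$ from $\overline{L}_q$ to $x$, with foot point $y$.
\item Since $\f$ is an SRF, $\overline{L}_q$ is a union of leaves. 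So $\gamma$ minimizes the distance from $L_y\subset\overline{L}_q$, is $F$-orthogonal to $L_y$, and is therefore horizontal.
\item Start from $f\equiv 0$ on $L_y$ and push the plaque-wise future equidistance along $\gamma$ using future metric projections. This gives $f\le f(x)$ on a neighborhood of $x$ in $L_x$.
\item Every point of $L_x$ is then a local maximum of the continuous function $f\vert_{L_x}$, so $f$ is constant on the connected leaf.
\end{enumerate}
You also need the same statement for the past distance $x\mapsto d_F(x,\overline{L}_q)$. Apply the argument to the reversed metric $v\mapsto F(-v)$, for which $\f$ is still an SFF. For nonreversible $F$ the equidistance characterization requires both future and past cylinders, and you only treat the forward one.

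\emph{Step~2 (not a real obstacle).} The uniformity problem does not arise, and comparing $F$ with $\alpha$ plays no role. The characterization you invoke, \cite[Lemma~3.7]{Alexandrino2019singular}, is local and phrased with plaques in trivializing neighborhoods, so no tube of uniform radius around $\overline{L}_q$ is needed. Since $\overline{L}_q$ is a closed embedded submanifold, for $x$ close to $q$ the infimum defining $d_F(\overline{L}_q,x)$ is attained in a small plaque of $\overline{L}_q$ around $q$. Constancy on closures of both the future and past distances therefore gives plaque-wise equidistance of $\overline{\f}$ directly.

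Equivalently, argue along a geodesic. Let $\gamma$ be unit-speed with $\gamma'(0)$ $F$-orthogonal to $\overline{L}_{\gamma(0)}$, and set $f:=d_F(\overline{L}_{\gamma(0)},\cdot)$. For small $t>0$ we have $f(\gamma(t))=t$ and $df_{\gamma(t)}=\ell^F_{\gamma'(t)}$, so constancy of $f$ on $\overline{L}_{\gamma(t)}$ forces orthogonality there. An open--closed argument along $\gamma$ then finishes, with closedness coming from vector fields spanning the leaves.

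\emph{Comparison with the paper.} With these repairs your route is valid and genuinely different from the paper's. The paper reduces to strata via Proposition~\ref{SFFstratCriterion} and describes $\f\vert_\Sigma$ by Finsler submersions. It then uses averaging and Salem's theorem to show that the closure of the holonomy pseudogroup acts by $F_2$-isometries. Your argument would use the Riemannian hypothesis only to know that $\overline{\f}$ is a singular foliation whose leaves are closed, embedded, and saturated by leaves of $\f$.
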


The above proposition and Theorem\ref{AlphaBetaSFF} imply our second result on singular foliations of $(\alpha,\beta)$-spaces.

\begin{corollary}
Let $\f$ be a singular Finsler foliation with locally closed leaves of a complete $(\alpha,\beta)$-space $(M,F)$, where $F = \alpha \phi \big( \tfrac{\beta}{\alpha} \big)$ for a complete Riemannian norm $\alpha$ and a $1$-form $\beta$ on $M$.
Assume one of the following conditions:
\begin{enumerate}[label=\textup{(\roman*)}]
\item $\phi$ has nowhere zero derivative, and $\bb$ is $\alpha$-horizontal to the leaves of $\f$; or

\item $\bb$ is vertical (nonvanishing vector field).
\end{enumerate}
Then $\overline{\f}=\{\overline{L}\}$ is again a singular Finsler foliation.
\end{corollary}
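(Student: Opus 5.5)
The plan is to combine Theorem~\ref{AlphaBetaSFF} with Proposition~\ref{MolinoSFF}. In both cases we produce a complete Riemannian norm with respect to which $\f$ is a singular Riemannian foliation; Proposition~\ref{MolinoSFF} then gives directly that $\overline{\f}$ is a singular Finsler foliation for $F$.

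\textbf{Case (i).} The hypotheses are exactly those of Theorem~\ref{AlphaBetaSFF}\ref{f1}: $\phi'$ never vanishes and $\bb$ is $\alpha$-orthogonal to $\f$. Hence $\f$ is a SRF with respect to $\ga$. Since $\alpha$ is assumed complete and $(M,F)$ is complete, Proposition~\ref{MolinoSFF} applies with this $\alpha$, and the conclusion follows.

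\textbf{Case (ii).} Here $\bb$ is nonvanishing and tangent to the leaves, so Theorem~\ref{AlphaBetaSFF}\ref{f2} gives a smooth $\kappa$ such that $\f$ is a SRF with respect to $\kappa\ga$. Implicitly $\kappa>0$, so that $\kappa\ga$ is a Riemannian metric. The only remaining issue, and the main obstacle, is completeness of $\kappa\ga$: a conformal change of a complete metric need not be complete. I would handle it in one of two ways.

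\begin{itemize}
\item \emph{Use the construction.} Inspect the construction of $\kappa$ in the proof of Theorem~\ref{AlphaBetaSFF}. I expect $\kappa$ to be built from $\phi$ and from $\alpha(\bb)=\sqrt{\ga(\bb,\bb)}$, which is bounded by $b_0$. Then $\kappa$ would be bounded below by a positive constant on $M$, so $\sqrt{\kappa}\,\alpha\geq c\,\alpha$ and completeness of $\alpha$ gives completeness of $\kappa\ga$.
\item \emph{Use completeness of $F$.} If such a bound is not transparent, compare $\kappa\ga$ with $F$ instead. Find $c>0$ with $\sqrt{\kappa}\,\alpha\geq c\,F$ on the unit sphere bundle. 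Then, since $(M,F)$ is complete (closed bounded sets are compact), $\kappa\ga$ is complete.
\end{itemize}

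Should neither bound hold globally, I would note that the proof of Molino's conjecture in Proposition~\ref{MolinoSFF} is local in nature, apart from the use of completeness. Completeness is what ensures that leaf closures are well behaved and that the holonomy and Killing-sheaf arguments apply. I would then argue that the completeness of $F$, together with the fact that $\kappa\ga$ and $F$ define the same horizontal geodesic structure transversally, suffices to run the argument.

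Once the completeness of $\kappa\ga$ is settled, Proposition~\ref{MolinoSFF} yields that $\overline{\f}$ is a singular Finsler foliation, concluding case (ii).
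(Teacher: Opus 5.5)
Case (i) is exactly the paper's argument. In case (ii) your route, Theorem~\ref{AlphaBetaSFF}\ref{f2} followed by Proposition~\ref{MolinoSFF}, is also the paper's; the paper presents the corollary as an immediate consequence and says nothing about completeness of $\kappa\ga$.

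You are right that Proposition~\ref{MolinoSFF} requires that completeness, but neither of your fixes establishes it. From the proof of Proposition~\ref{AlphaBetaSubm}\ref{s3}, $\kappa=\rho\circ h$ with $\rho=\phi\lambda$. Here $h(p)$ is the common value of $\beta/\alpha$ on the orthogonal cone, and it solves $h\lambda(h)+\alpha(\bb)^2\phi'(h)=0$. So $\kappa$ does depend only on $\alpha(\bb)$, as you guessed. However, the hypotheses give only the pointwise bound $\alpha(\bb)<b_0$, and on a noncompact $M$ one can have $\sup_M\alpha(\bb)=b_0$. For Randers metrics ($\phi(s)=1+s$, $b_0=1$) one gets $h=-\alpha(\bb)^2$ and $\kappa=1-\alpha(\bb)^2$, whose infimum is then $0$.

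Your second comparison is not an independent route. On vectors $\alpha$-orthogonal to $\bb$ one has $F=\phi(0)\alpha$, so $\sqrt{\kappa}\,\alpha\geq cF$ forces $\kappa\geq c^2\phi(0)^2$, which is the same uniform lower bound. The fallback amounts to reproving Molino's conjecture for an incomplete Riemannian metric. That is precisely the hard step Proposition~\ref{MolinoSFF} imports, so it is not an argument.

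In fact, completeness of $\kappa\ga$ can fail under all the corollary's hypotheses. On $\mathbb R^3$ let $f(y,z)=b(z)\tanh y$, where $b$ is smooth, $0<b<1$, $\vert b'\vert<1$, and $b(z)=1-e^{-z}$ for $z\geq 1$. Then $\vert df\vert^2\leq\max(b^2,b'^2)<1$. Put $\beta=df$, $\ga=(1-\vert df\vert^2)^{-1}dx^2+dy^2+dz^2$, $F=\alpha+\beta$, and let $\f$ be the planes $x=\mathrm{const}$. All hypotheses hold:
\begin{itemize}
\item $\ga$ dominates the Euclidean metric, so $\alpha$ is complete.
\item Since $\vert f\vert<1$, every curve and its reverse satisfy $L_F\geq L_\alpha-2$, so $F$ is forward and backward complete.
\item $\bb=\nabla f$ is tangent to the planes and never vanishes, since its $\partial_y$-component is $b(z)(1-\tanh^2 y)>0$.
\item $\min\{F(v):dx(v)=\pm1\}=1$ at every point, so $(x,y,z)\mapsto x$ is a Finsler submersion onto $(\mathbb R,\vert dx\vert)$, and $\f$ is a Finsler foliation with closed leaves.
\end{itemize}
Yet $\kappa\ga=dx^2+(1-\vert df\vert^2)(dy^2+dz^2)$. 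Along the ray $x=y=0$, $z\geq1$, we have $1-\vert df\vert^2=1-b(z)^2\leq 2e^{-z}$, so this ray has finite length. Moreover, any conformal factor making $\f$ Riemannian must keep $\kappa\ga(\partial_x,\partial_x)$ constant on leaves, i.e.\ $\kappa=g(x)(1-\vert df\vert^2)$. So no complete metric conformal to $\ga$ works. The corollary's conclusion holds trivially here because the leaves are closed, so this is a gap in the argument, not in the statement.

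To close case (ii) along your lines you need an extra hypothesis such as $\sup_M\alpha(\bb)<b_0$ (e.g.\ $M$ compact). Then $\vert h\vert\leq\alpha(\bb)\leq b_1<b_0$, so $\kappa\geq\min_{[-b_1,b_1]}\rho>0$ and $\kappa\ga$ is complete; this is your first argument made rigorous. Without such a hypothesis, you must produce some other complete metric, not conformal to $\ga$, for which $\f$ is a singular Riemannian foliation.
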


Another property satisfied by the regular leaves (i.e. leaves of maximum dimension) of an SRF is \emph{equifocality}. This concept was introduced by Terng and Thorbergsson in \cite{Terng1995submanifold}. Roughly speaking, when we push a submanifold along its normal directions by the Riemannian  exponential, all resulting “parallel sets” remain (immersed) submanifolds, even across focal points. Equifocality has proved fundamental in understanding the topology, metric structure, and dynamical behavior of SRFs; e.g. \cite{Alexandrino2010desingularization, Alexandrino2017closure, Alexandrino2017smoothness}. The notion can be easily extended to the Finslerian setting \cite{Alexandrino2019singular, Alexandrino2024equifocal}.

\begin{definition}
\label{DefEquifocal}
Let $\f = \{ L_p \}_{p \in M}$ be a singular Finsler foliation on a complete Finsler manifold $(M,F)$. A regular leaf $L$ of $\f$ is called \textbf{equifocal} if every point $q \in L$ admits a neighborhood $P_{q}$ of $q$ in $L$ such that for each foliated horizontal unit vector field $\xi$ along $P_{q}$ and for each $r>0$ we have:
\begin{enumerate}[label=\textup{(\roman*)}]
\item the rank of the derivative of $\eta_{r\xi}: P_q \to M$ is constant, where the \textbf{endpoint map} $\eta_{r\xi}$ is defined as $\eta_{r\xi}(x)=\exp_{x}(r\xi)$; and

\item $\eta_{r\xi}(P_{q}) \subset L_{\eta_{r\xi}(q)}$. 
\end{enumerate}
\end{definition}

In what follows, we continue to investigate the concept of equifocality for regular leaves of a SFF, which began in \cite[Corollary~1.2]{Alexandrino2019singular} and \cite[Theorem~1.1]{Alexandrino2024equifocal}.

\begin{theorem}
\label{FinslerEquifocal}
Let $\f = \{ L \}$ be a singular Finsler foliation on a complete Finsler manifold $(M,F)$. Assume that:
\begin{enumerate}[label=\textup{(\roman*)}]
\item there exists a complete Riemannian norm $\alpha$ on $M$ such that $\f$ is a singular Riemannian foliation with respect to $\alpha$; and

\item $\f$ has (locally) closed leaves.
\end{enumerate}
Then the regular leaves of $\f$ are equifocal submanifolds with respect to the Finsler metric $F$.
\end{theorem}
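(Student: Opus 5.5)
The plan is to transfer equifocality from the Riemannian structure $\alpha$, where it is classical, to the Finsler structure $F$, using the fact that both metrics determine the same transverse geometry. The bridge is the local quotient picture. Since $\f$ has locally closed leaves and is an SRF for $\alpha$, the Slice Theorem~\ref{SliceThm} and the local structure of SRFs give the following: for every $q$ in a regular leaf $L$ there is a neighborhood $U$ of $q$, a plaque neighborhood $P_q\subset L\cap U$, and a submersion $\pi\colon U_{\mathrm{reg}}\to \sigma$ onto a Riemannian orbifold chart $\sigma$ of the local leaf space. On the regular part, $\pi$ is simultaneously a Riemannian submersion for $\alpha$ (because $\f$ is an SRF) and a Finsler submersion for $F$ (because $\f$ is an SFF). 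The latter gives an induced Finsler metric $\bar F$ on $\sigma$. The key properties of Finsler submersions, as used in \cite{Alexandrino2019singular}, are these: $F$-horizontal geodesics project to $\bar F$-geodesics, and foliated $F$-horizontal unit vector fields $\xi$ along $P_q$ are exactly the horizontal lifts of a single vector $\bar\xi$ at $\pi(P_q)$.

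\begin{step} Item~(ii) of Definition~\ref{DefEquifocal}. \end{step}
Fix a foliated horizontal unit field $\xi$ along $P_q$ and $r>0$. Every $F$-geodesic $\gamma_x(t)=\exp_x(t\xi_x)$, with $x\in P_q$, is $F$-horizontal for all $t$ because $\f$ is an SFF. While it stays in the regular stratum, it projects to the same $\bar F$-geodesic $\bar\gamma(t)=\exp^{\bar F}(t\bar\xi)$. Hence all the $\gamma_x(t)$ lie in one leaf for small $t$.

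To cross singular leaves and reach arbitrary $r$, I will argue by continuity and connectedness in $t$. The set of $t\in[0,r]$ for which $\eta_{t\xi}(P_q)\subset L_{\eta_{t\xi}(q)}$ holds is closed, because leaves are locally closed and the leaf closures/strata are controlled by the $\alpha$-SRF structure. It is also open: near any point, including a singular one, the same local argument applies in a slice, using that $F$-horizontal geodesics through a singular leaf are still $F$-orthogonal to the leaves they meet.

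\begin{step} Item~(i), constant rank. \end{step}
Write $\eta_{r\xi}=\eta_{r\xi}|_{P_q}$. Its image lies in a single leaf $L'$ by Step 1, so $d\eta_{r\xi}$ takes values in $TL'$. I will show that $\eta_{r\xi}\colon P_q\to L'$ is equivariant with respect to the holonomy/transverse structure. Concretely, the map is locally a "foliated transport" along the leaf, and its differential kernel is the $F$-analogue of the vertical focal directions. To get constant rank I intend to compare with the $\alpha$-picture. The Riemannian endpoint map is known to have constant rank (the Riemannian equifocality theorem for SRFs with closed leaves, \cite{Alexandrino2017closure,Alexandrino2010desingularization}), and the fibres of both maps are determined by the same transverse data.

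My first attempt will be this. Show that $\eta_{r\xi}$ is a submersion from $P_q$ onto an open subset of $L'$, so its rank equals $\dim L'$ and is constant. The argument: any curve in $L'$ near $\eta_{r\xi}(q)$ lifts back via reverse geodesics $\exp(-r\,\cdot)$ of the reversed metric, which are also horizontal, into $P_q$. If that fails, I will instead use the homogeneity of the transverse problem: $\ker d\eta_{r\xi}$ at $x$ corresponds to the kernel of the derivative of the holonomy-type map, and this can be read off from $\bar\gamma$ alone, independently of $x$.

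\textbf{Main obstacle.} The hardest point is the passage of $F$-geodesics through singular leaves, where the submersion picture breaks down. Here I expect to use that, in the Slice Theorem, the infinitesimal foliation is an SFF of a Minkowski space which is simultaneously Riemannian for $\alpha$. This should let the Riemannian proof of \cite{Alexandrino2019singular,Alexandrino2024equifocal} be reproduced verbatim once geodesics are replaced by $F$-geodesics.
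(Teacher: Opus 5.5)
There is a genuine gap, and it sits where the theorem has its content. The problem is the ``openness'' in your Step~1 at a time $t_0$ with $\eta_{t_0\xi}(q)$ on a singular leaf, and the rank claim in Step~2 at and beyond such times. The fact you invoke, that $F$-horizontal geodesics through a singular leaf stay $F$-orthogonal to every leaf they meet, is just the SFF axiom. It does not say that the geodesics $\eta_{t\xi}(x)$, $x\in P_q$, which reach the singular plaque, leave it into a \emph{common} leaf.

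In the Riemannian proof this comes from the symmetry of the tubular structure: reverse the geodesic, and past and future cylinders around the singular plaque coincide. For non-reversible $F$ that symmetry is lost. The incoming segments lie in the past tube and are governed by $\exp^{F(-)}$, the outgoing ones lie in the future tube, and nothing in the SFF structure relates the two. The Slice Theorem does not repair this. The foliated diffeomorphism in Theorem~\ref{SliceThm} is not an exponential map, so $F$-geodesics are not sent to rays of the Minkowski model. Your claim that this model is simultaneously an SRF for $\alpha$ is unjustified, and would not help for the same reason. As written, hypothesis~(i) is never used at the one place it is needed.

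Step~2 has the same problem. Lifting back along reverse geodesics presupposes the reverse-direction version of item~(ii) across singular leaves. Moreover, when $L'$ is singular there is no single foliated horizontal field along $L'$ to lift with.

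The missing idea is the transfer through $\alpha$ in Lemma~\ref{EquifocalMain}. There one considers incoming horizontal $F$-geodesics $\gamma_\theta$ with velocities $v_\theta=\gamma_\theta'(0)$ at points $q_\theta$ of the singular leaf. In a trivializing neighborhood, the regular subfoliation $\f^{\pi}$ induces foliated diffeomorphisms $\varphi^{-}_{\theta}\colon S^{-}_{q_\theta}\setminus\{q_\theta\}\to S_{q_\theta}\setminus\{q_\theta\}$ and $\varphi^{+}_{\theta}\colon S_{q_\theta}\setminus\{q_\theta\}\to S^{+}_{q_\theta}\setminus\{q_\theta\}$. These connect the backward $F$-slice, the $\alpha$-slice and the forward $F$-slice. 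The argument then runs in four moves:
\begin{enumerate}
\item Pushing the incoming $F$-geodesics by $\varphi^{-}_{\theta}$ gives curves whose initial velocity is the $\alpha$-orthogonal projection of $-v_\theta$.
\item The Homothetic Transformation Lemma~\ref{HomotheticLemma}, together with local closedness, replaces these curves by $\alpha$-geodesics that still lie in common leaves.
\item Riemannian equifocality carries the reversed $\alpha$-geodesics across $q_\theta$.
\item Finally $\varphi^{+}_{\theta}$, the Homothetic Lemma for $F$, and a limit argument show that the points $\exp^{F}_{q_\theta}(rv_\theta)$ lie in one leaf for small $r>0$. The limit uses that an $F$-horizontal unit vector is determined by its $\alpha$-projection, cf.\ Lemma~\ref{FlattingHorizCone}.
\end{enumerate}
Constant rank is then proved case by case in Lemma~\ref{EquifocalLemma}. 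Before the singular leaf the endpoint map is a diffeomorphism of plaques. At the singular leaf it is a submersion onto it, by the past tubular structure. After it, the kernel lies in the past slice, where the endpoint map has injective differential because there are no conjugate points. The global statement follows by composing these short endpoint maps along $\gamma$.
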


The previous result guarantees the equifocality of regular leaves on a SFF under conditions that are valid for many $(\alpha,\beta)$-spaces.

\begin{corollary}
\label{AlphaBetaEquifocal}
Let $\f$ be a singular Finsler foliation with (locally) closed leaves of a complete $(\alpha,\beta)$-space $(M,F)$, where $F = \alpha \phi \big( \tfrac{\beta}{\alpha} \big)$ for a complete Riemannian norm $\alpha$ and a $1$-form $\beta$ on $M$, and let $\bb$ denote the $\alpha$-dual to $\beta$. Assume one of the following conditions:
\begin{enumerate}[label=\textup{(\roman*)}]
\item $\phi$ has nowhere zero derivative, and $\bb$ is $\alpha$-horizontal to the leaves of $\f$; or

\item $\bb$ is a vertical nonvanishing vector field.
\end{enumerate}
Then the regular leaves of $\f$ are equifocal with respect to the metric $F$. 
\end{corollary}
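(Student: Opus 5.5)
The plan is to reduce Corollary~\ref{AlphaBetaEquifocal} to Theorem~\ref{FinslerEquifocal}, using Theorem~\ref{AlphaBetaSFF} to supply a complete Riemannian norm for which $\f$ is a singular Riemannian foliation. Theorem~\ref{FinslerEquifocal} asks for three things: $(M,F)$ is complete, which is assumed; $\f$ has locally closed leaves, which is assumed; and there is a \emph{complete} Riemannian norm making $\f$ a SRF. Once the last point is settled, Theorem~\ref{FinslerEquifocal} gives equifocality of the regular leaves with respect to $F$ directly.

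\emph{Case (i).} Suppose $\phi'$ never vanishes and $\bb$ is $\alpha$-horizontal. Then item~\ref{f1} of Theorem~\ref{AlphaBetaSFF} says $\f$ is a SRF with respect to $\ga$. Since $\alpha$ is complete by hypothesis, Theorem~\ref{FinslerEquifocal} applies at once.

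\emph{Case (ii).} Suppose $\bb$ is vertical and nonvanishing. Then item~\ref{f2} of Theorem~\ref{AlphaBetaSFF} gives a smooth $\kappa$ such that $\f$ is a SRF with respect to $\kappa\ga$. This is the step I expect to be the main obstacle, because $\kappa\ga$ need not be complete. I would handle it in two steps.
\begin{enumerate}
\item Check that $\kappa>0$. This should come out of the construction in the proof of item~\ref{f2}, where $\kappa$ is presumably built from $\phi$ evaluated along $\bb$, or from the fundamental tensor of $F$ restricted to the horizontal directions.
\item Restore completeness by a basic conformal change. I would multiply $\kappa\ga$ by $e^{2h}$, where $h$ is constant on leaves, and choose $h$ growing fast enough at infinity that $e^{2h}\kappa\ga \geq \ga$ outside compact sets.
\end{enumerate}
The conformal change in step~2 keeps $\f$ a SRF. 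Horizontal geodesics of a SRF locally are lifts of geodesics of the local quotient, and a basic conformal factor descends to that quotient, so transnormality is preserved. To obtain such an $h$, I would take a basic proper function. One candidate is a smoothing of the $\alpha$-distance to a fixed leaf, which is basic because $\f$ is a SFF with closed leaves. Another is a smooth basic approximation of such a function, built using the closed-leaf hypothesis and a basic partition of unity.

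A cleaner alternative, which I would try first, is to check whether the proof of item~\ref{f2} shows $\kappa$ is bounded below by a positive constant. For instance, $\kappa$ might be comparable to $\phi(\pm\|\bb\|)^2$ on a region where $\|\bb\|_\alpha<b_0$. If so, $\kappa\ga\geq c\,\ga$, hence $\kappa\ga$ is complete, and Theorem~\ref{FinslerEquifocal} applies directly.
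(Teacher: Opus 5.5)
Your reduction is the one the paper uses. The corollary is stated as an immediate consequence of Theorem~\ref{AlphaBetaSFF} and Theorem~\ref{FinslerEquifocal}, with no further argument, and in case (i) nothing more is needed.

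In case (ii) you are right that the paper applies Theorem~\ref{FinslerEquifocal} to $\kappa\ga$ without addressing its completeness. The same tacit point already occurs in item~\ref{f2} of Theorem~\ref{AlphaBetaSFF}, whose proof applies Proposition~\ref{SFFstratCriterion} (stated for complete metrics) to $\kappa\ga$. Your first step is fine: $\kappa=\rho\circ h=\phi(h)\lambda(h)>0$.

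Your repair, however, does not work in general. A basic function is constant on leaves, and the leaves are only assumed closed, not compact. If some leaf is noncompact, no basic function is proper. Along a noncompact leaf on which $\kappa$ decays, no basic factor $e^{2f}$ can give $e^{2f}\kappa\ga\geq\ga$ outside a compact set.

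And $\kappa$ is not basic. Solving $\ell^F_v\vert_{\vt}=0$ with $\bb\in\vt$ gives $h=H(\alpha(\bb))$, where $H(b)$ is the unique root in $[-b,b]$ of $s\phi(s)+(b^2-s^2)\phi^{\prime}(s)=0$ (its $s$-derivative is $\Phi(s,b)>0$). For Randers metrics this gives $h=-\alpha(\bb)^2$ and $\kappa=1-\alpha(\bb)^2$, and nothing forces $\alpha(\bb)$ to be constant on leaves.

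Concretely, on $\mathbb R^2$ let $F=\alpha+\beta$ with $\ga=dx^2+(1-b(x)^2)^{-1}dy^2$ and $\bb=b(x)\,\partial_x$. Here $0<b<1$, $b\equiv\tfrac12$ for $x\leq 0$, and $1-b(x)\sim 1/x$ as $x\to\infty$. The example satisfies all hypotheses of case (ii):
\begin{itemize}
\item Minimizing $F(\dot x,\dot y)$ over $\dot x$ gives $\vert\dot y\vert$, so $\pi(x,y)=y$ is a Finsler submersion onto $(\mathbb R,\vert dy\vert)$.
\item $\bb$ is vertical and nonvanishing, and the leaves $\{y=c\}$ are closed.
\item $\ga\geq dx^2+dy^2$ is complete.
\item $F\geq(1-b)\alpha$, and $(1-b)^2\ga=(1-b)^2dx^2+\tfrac{1-b}{1+b}dy^2$ is complete because $\int_{-\infty}^{0}(1-b)\,dx=\int_0^\infty(1-b)\,dx=\infty$.
\end{itemize}
Yet $\kappa=1-b(x)^2\to0$ along every leaf. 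This defeats both the basic conformal change and your ``cleaner alternative'' $\kappa\geq c>0$.

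Your candidate ``$\alpha$-distance to a fixed leaf'' is not basic here either, since $\f$ is not Riemannian for $\ga$: the transverse factor $(1-b(x)^2)^{-1}$ varies along leaves. In this example $\kappa\ga=(1-b^2)dx^2+dy^2$ happens to be complete, so it does not contradict the corollary. It only shows that your route cannot close case (ii).

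You need a different idea there. One option is to derive completeness of $\kappa\ga$ from that of $F$ and $\alpha$, without relying on a uniform lower bound for $\kappa$ or on a basic conformal factor. The other is to check that the auxiliary Riemannian metric enters the proof of Theorem~\ref{FinslerEquifocal} only through local facts: the Homothetic Transformation Lemma, and equifocality of regular leaves along short geodesic segments inside a trivializing neighbourhood $U_q$. Completeness is irrelevant for those.
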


This paper is organized as follows. We begin by reviewing some fundamental facts of Finsler geometry in Section~2. In the subsequent Section~3, we highlight some characteristics of $(\alpha,\beta)$-metrics, which will be needed later on. In Section~4, we briefly recall a few properties of Finsler submersions, based on \cite{Alvarez2001isometric, Alexandrino2019singular}. Section~5 is then dedicated to the proof of Proposition~\ref{AlphaBetaSubm}, on $(\alpha,\beta)$-submersions. In what follows, Section~6 holds some results on singular Finsler foliations from the article \cite{Alexandrino2019singular}; roughly speaking, they generalize classical results on proper actions; e.g. \cite{Alexandrino2015lie}. Sections~7 and~8 present the proofs of Theorems~\ref{AlphaBetaSFF} and~\ref{FinslerEquifocal}, respectively. At last, Section~9 contains the demonstration of Proposition~\ref{MolinoSFF}.
\section{Finsler metrics}

Let $V$ be an $n$-dimensional real vector space. A \textbf{Minkowski norm} on $V$ is a continuous function $F: V \to [0, \infty)$ such that
\begin{enumerate}[label=\textup{(\roman*)}]
\item $F$ is smooth on $V \setminus 0$;

\item  $F$ is positively homogeneous of degree one, i.e. $F(\lambda v) = \lambda F(v)$, for all $v \in V$ and $\lambda > 0$; and

\item  for each $v\in V \setminus 0$, the \textbf{fundamental tensor}  of $F$ at $v$, defined as
\begin{equation}
\g^F_v (u,w) := \left. \frac{\partial^2}{\partial s \partial t} \left[ \frac{1}{2} F^2(v + su + tw) \right] \right\vert_{s=t=0} , \text{ for all } u,w \in V ,
\end{equation}
is a positive-definite symmetric bilinear form.
\end{enumerate}
The pair $(V, F)$ is said to be a \textbf{Minkowski space}.

Let $M$ be an $n$-dimensional manifold. A continuous function $F:TM\to [0,\infty)$ is a \textbf{Finsler metric} if
\begin{enumerate}[label=\textup{(\roman*)}]
\item $F$ is smooth on $TM\setminus 0$; and

\item for each $p\in M$, $F_p := F\vert_{T_p M}$ is a Minkowski norm on $T_p M$. 
\end{enumerate}
The pair $(M,F)$ is called a \textbf{Finsler manifold}. Notice that $F$ naturally induces many Riemannian metrics on $T_p M$ by $\tilde{\g}_v = \g^F_v$, for each $v \in T_p M \setminus 0$.

\begin{example}
A Riemannian norm $\alpha$ is a Finsler metric whose associated fundamental tensor coincides with the Riemannian metric defined by $\alpha$, i.e. $\g^\alpha = \ga$ for $\alpha(\cdot) = \sqrt{\ga(\cdot,\cdot)}$. Define the Lagrangian $R = \alpha + \beta$, where $\beta$ is a 1-form. Then $R$ is a Finsler metric if and only if the dual norm of $\beta$ satisfies $\alpha^{\ast} (\beta) < 1$. In this case, the Finsler metric $R$ is called a \textbf{Randers metric} with data $(\alpha, \beta)$.
\end{example}

\begin{lemma}\cite[Lemma~2.1]{Alexandrino2019singular}
\label{FundTensor}
Let $(M, F)$ be a Finsler manifold. Given $v \in TM \setminus 0$, the fundamental tensor of $F$ satisfies:
\begin{enumerate}[label=\textup{(\roman*)}]
    \item $\g^F_{\lambda v} = \g^F_v$ for all $\lambda > 0$;
    \item $\g^F_v(v, v) = F^2(v)$; and
    \item $\g^F_v(v, w) =  \left. \frac{\partial}{\partial t} \left[ \frac{1}{2}F^2(v + tw) \right] \right\vert_{t=0}$ for all $w \in T_{\tau(v)} M$.
\end{enumerate}
Here and throughout, $\tau: TM \to M$ denotes the canonical projection.
\end{lemma}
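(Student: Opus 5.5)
The three identities follow by differentiating $F^2$ and using only its homogeneity; I will take them in the order (i), (iii), (ii), since (ii) drops out of (iii) at once.

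For (i), fix $v \neq 0$ and $\lambda>0$. Positive homogeneity of degree one gives $F^2(\lambda v + su + tw) = \lambda^2 F^2\big(v + \tfrac{s}{\lambda}u + \tfrac{t}{\lambda}w\big)$ for $s,t$ small, so that $\lambda v + su + tw$ stays away from the zero section. Apply $\frac{\partial^2}{\partial s\partial t}$ at $s=t=0$. The chain rule produces a factor $\lambda^{-1}$ from each of the two derivatives, and these cancel the prefactor $\lambda^2$. Hence $\g^F_{\lambda v} = \g^F_v$.

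For (iii), put $\Phi(t) := \tfrac12 F^2(v+tw)$. By the definition of the fundamental tensor,
\[
\g^F_v(v,w) = \left.\frac{\partial^2}{\partial s\partial t}\left[\tfrac12 F^2\big((1+s)v + tw\big)\right]\right\vert_{s=t=0}.
\]
For $s$ near $0$ we have $1+s>0$, so homogeneity gives
\[
\tfrac12 F^2\big((1+s)v+tw\big) = (1+s)^2\,\Phi\!\left(\tfrac{t}{1+s}\right).
\]
Differentiate in $t$ first: the result is $(1+s)\Phi'\!\big(\tfrac{t}{1+s}\big)$, which equals $(1+s)\Phi'(0)$ at $t=0$. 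Differentiating in $s$ then yields $\Phi'(0)$. The order of the mixed partials does not matter, because $F^2$ is smooth on $TM\setminus 0$. This proves (iii).

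For (ii), take $w=v$ in (iii). Then $\Phi(t) = \tfrac12(1+t)^2F^2(v)$, so $\Phi'(0) = F^2(v)$, which is $\g^F_v(v,v)=F^2(v)$.

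The argument has no real obstacle. The only care needed is to keep all perturbations near $v$ and with positive scalars, so that both smoothness and positive (not absolute) homogeneity apply.
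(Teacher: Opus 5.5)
Your proof is correct in all three parts. The rescalings $F^2(\lambda v+su+tw)=\lambda^2F^2\bigl(v+\tfrac{s}{\lambda}u+\tfrac{t}{\lambda}w\bigr)$ and $\tfrac12F^2\bigl((1+s)v+tw\bigr)=(1+s)^2\Phi\bigl(\tfrac{t}{1+s}\bigr)$ are legitimate, because only positive scalars occur and every point stays near $v\neq 0$. The derivative computations also check out, including differentiating in $t$ first and evaluating at $t=0$ before differentiating in $s$.

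The paper gives no proof of this lemma; it only quotes it from \cite[Lemma~2.1]{Alexandrino2019singular}. The usual textbook argument works in coordinates, writing $g_{ij}(v)=\tfrac12[F^2]_{y^iy^j}(v)$, and applies Euler's theorem for positively homogeneous functions three times:
\begin{itemize}
\item $[F^2]_{y^iy^j}$ is $0$-homogeneous, which gives (i);
\item $\tfrac12[F^2]_{y^i}$ is $1$-homogeneous, so $g_{ij}(v)v^j=\tfrac12[F^2]_{y^i}(v)$, which is (iii);
\item $F^2$ is $2$-homogeneous, which gives (ii).
\end{itemize}
Your substitutions are exactly the directional, coordinate-free derivation of these Euler identities, so the two routes agree in substance. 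Yours is self-contained and works directly with the definition of $\g^F_v$ as a mixed second derivative, which is how the paper states it. The coordinate version is shorter if Euler's theorem is taken as known.
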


In words, the second item of Lemma~\ref{FundTensor} says a Finsler metric $F$ can be recovered from its fundamental tensor. So $F$ can be interpreted as a family of inner products that depends on the direction.

The \textbf{indicatrix} of a Finsler metric $F$ is the hypersurface $\mathcal I^F := F^{-1}(1) \subset TM$. The \textbf{indicatrix at $p \in M$} is the hypersurface in $T_pM$ defined by 
\begin{equation*}
\mathcal{I}_p^F := \{ v \in T_p M : F(v) = 1 \} .
\end{equation*}
It can be verified that $\mathcal{I}_p^ F$ is strongly convex, diffeomorphic to the standard unit sphere, the origin is contained in the region bounded by $\mathcal{I}_p^F$, and each ray emanating from the origin intersects $\mathcal{I}_p^F$ at exactly one point.  Furthermore, for any $v \in \mathcal{I}_p^F$, the tangent space $T_v\mathcal{I}_p^F$ coincides with the subspace $\{w \in T_pM : \g^F_v(v, w) = 0\}$, which leads to the definition of orthogonality.

Given a submanifold $P$ of a Finsler manifold $(M, F)$, we say that a vector $v \in T_q M$ is \textbf{orthogonal to $P$} at $q \in P$ if $\g^F_v (v,w) = 0$ for all $w \in T_q P$. The set of all (non-zero) orthogonal vectors to $P$ at $q$, denoted $\nu_q(P)$, is called an \textbf{orthogonal cone}. As the name suggests, $\nu_q(P)$ is not necessarily a subspace, but a cone in $T_q M$, i.e. $\lambda v \in \nu_q (P)$ for all $v \in \nu_q (P)$ and $\lambda >0$. Moreover, one can prove that $\nu_q(P)$ is a submanifold of $T_q M$ with dimension $n - \dim P$. A similar result holds for the set of all orthogonal vectors to $P$, denoted $\nu(P)$, and the space of unit orthogonal vectors to $P$, $\nu^1(P)$.

\begin{proposition}\cite[Proposition~2.3]{Alexandrino2019singular}
Let $(M, F)$ be a Finsler manifold, and $P \subset M$ a submanifold. Then the orthogonal cone $\nu(P)$ and the unit orthogonal cone $\nu^1(P)$ are smooth submanifolds of $TM$. Furthermore, the restrictions of the canonical projection $\tau : \nu(P) \to P$ and $\tau : \nu^1(P) \to P$ are submersions.
\end{proposition}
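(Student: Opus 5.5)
The plan is to realize $\nu(P)$ as the preimage of a regular value, and then to get the submersion statements by exhibiting local sections of $\tau$. Set $k=\dim P$ and work locally near a point $q_0\in P$. Choose a local frame $X_1,\dots,X_k$ of $TP$ on a neighborhood $U\subset P$. Extend each $X_i$ smoothly to a neighborhood in $M$, though only its values along $P$ will be used. On the open set $\mathcal U=\tau^{-1}(U)\setminus 0$ of $TM\vert_P$, define
\begin{equation*}
\Phi:\mathcal U\to \mathbb R^k,\qquad \Phi(v)=\big(\g^F_v(v,X_1(\tau(v))),\dots,\g^F_v(v,X_k(\tau(v)))\big).
\end{equation*}
By Lemma~\ref{FundTensor}(iii), $\g^F_v(v,w)=\frac12\, d(F_p^2)_v(w)$, so $\Phi$ is smooth away from the zero section. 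By definition, $\nu(P)\cap\mathcal U=\Phi^{-1}(0)$. Since $TM\vert_P$ is a smooth submanifold of $TM$, it suffices to show that $0$ is a regular value of $\Phi$. This already gives $\dim\nu(P)=\dim TM\vert_P-k=k+n-k=n$. The fiberwise count is then $\dim\nu_q(P)=n-k$, as claimed.

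The key step is surjectivity of $d\Phi_v$ at $v\in\nu_q(P)$. I would restrict $\Phi$ to the fiber $T_qM\setminus0$. There $\Phi=\tfrac12\big(d(F_q^2)(X_1),\dots,d(F_q^2)(X_k)\big)$, with the $X_i=X_i(q)$ now fixed vectors. Differentiating in the direction $u\in T_qM$ gives
\begin{equation*}
d\Phi_v(u)=\big(\g^F_v(u,X_1),\dots,\g^F_v(u,X_k)\big),
\end{equation*}
because the Hessian of $\tfrac12F_q^2$ at $v$ is exactly $\g^F_v$. The form $\g^F_v$ is positive definite and $X_1,\dots,X_k$ are linearly independent, so the map $u\mapsto(\g^F_v(u,X_i))_i$ is onto $\mathbb R^k$. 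For instance, the $\g^F_v$-duals of the $X_i$ hit a basis. Thus even the vertical part of $d\Phi_v$ is surjective. The same computation shows that $\nu_q(P)=\Phi\vert_{T_qM}^{-1}(0)$ is a submanifold of $T_qM$ of dimension $n-k$.

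For $\nu^1(P)$, I would add the component $F^2(v)-1$, i.e.\ consider $\Psi=(\Phi,F^2-1)$. The vertical differential of $F^2$ at $v$ is $u\mapsto 2\g^F_v(v,u)$. So I need $u$ with $\g^F_v(u,X_i)=a_i$ and $\g^F_v(u,v)=c$ for arbitrary $a_i,c$. This is solvable because $v$ is $\g^F_v$-orthogonal to all $X_i$, hence $v,X_1,\dots,X_k$ are linearly independent, and again positive definiteness gives surjectivity. Thus $\nu^1(P)$ is a submanifold of codimension one in $\nu(P)$.

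Finally, I would show that $\tau$ restricted to $\nu(P)$ and to $\nu^1(P)$ is a submersion. Given $v\in\nu_q(P)$, the goal is a smooth local section through $v$. The implicit function theorem applied to $\Phi$ (respectively $\Psi$) along $P$ provides one. Concretely, take a local trivialization $TM\vert_U\cong U\times\mathbb R^n$. Because the vertical derivative of $\Phi$ (resp.\ $\Psi$) is surjective, the zero set can be written locally as a graph over $U\times(\text{kernel directions})$. This yields a smooth map $\sigma:U'\to\nu(P)$ with $\sigma(q)=v$ and $\tau\circ\sigma=\mathrm{id}$, so $d\tau_v$ is onto $T_qP$. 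I expect the main obstacle to be this step: the zero set should be seen to project submersively, which needs the vertical-surjectivity just established rather than only surjectivity of the full differential. Once that is phrased via the implicit function theorem in the fiber variables, the rest is routine.
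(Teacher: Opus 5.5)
Your proof is correct. $\Phi^{-1}(0)=\nu(P)\cap\mathcal U$ holds because the $X_i(q)$ span $T_qP$. The fiber derivative $u\mapsto(\g^F_v(u,X_i))_i$ is onto by nondegeneracy of $\g^F_v$. And $\g^F_v(v,v)=F^2(v)>0$ forces $v\notin\mathrm{span}\{X_i(q)\}$, which is what the unit cone needs.

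The submersion step you flag as the obstacle is also fine, and it reduces to a dimension count. $T_v\nu(P)=\ker d\Phi_v$ is $n$-dimensional. Its intersection with $\ker d\tau_v$ is the kernel of the vertical part of $d\Phi_v$, which has dimension $n-k$. Hence $d\tau_v(T_v\nu(P))$ is $k$-dimensional, i.e.\ all of $T_qP$. Likewise $(n-1)-(n-k-1)=k$ for $\nu^1(P)$. If $P$ is only immersed, as for a leaf, work in the pullback bundle instead of $TM\vert_P$; nothing else changes.

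The paper does not prove this statement itself; it quotes \cite{Alexandrino2019singular}. The route suggested by the paper's own tools is the Legendre map from the subsection on the Legendre transformation. There $\ell^F\colon TM\setminus 0\to T^{\ast}M\setminus 0$ is a fiber-preserving diffeomorphism, and $\nu(P)=(\ell^F)^{-1}\big(\mathrm{Ann}(TP)\setminus 0\big)$, where $\mathrm{Ann}(TP)\subset T^{\ast}M\vert_P$ is the rank-$(n-k)$ conormal bundle. Both assertions about $\nu(P)$ are then immediate, since $\tau\vert_{\nu(P)}$ is a vector-bundle projection composed with a diffeomorphism. One also gets a global picture for free: $\nu(P)\cong\mathrm{Ann}(TP)\setminus 0$, and $\nu^1(P)$ is carried onto the unit sphere bundle of $\mathrm{Ann}(TP)$ for the dual norm, because $F^{\ast}(\ell^F_v)=F(v)$.

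Your $\Phi$ is exactly $v\mapsto\ell^F_v\vert_{T_{\tau(v)}P}$ written in the frame $\{X_i\}$. Your Hessian computation is the statement that the fiber derivative of $\ell^F$ at $v$ is $\g^F_v$. So your argument is the local, linearized form of the same idea. It costs you the implicit-function (or dimension-count) step and yields only local structure. In exchange, it uses only nondegeneracy of $\g^F_v$ and $F(v)\neq 0$, not the global injectivity of $\ell^F$, which rests on strong convexity of the indicatrix.
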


The \textbf{geodesics} of the Finsler metric $F$ are precisely the geodesics of its energy Lagrangian $\tfrac{1}{2} F^2$, i.e. the critical points of the energy functional, which associates to each piecewise smooth curve $\gamma: [a, b] \to M$ the quantity
\begin{equation*}
E(\gamma) =  \frac{1}{2} \int_{a}^{b} F^2(\gamma^{\prime}(t)) \, dt .
\end{equation*}
By the regularity of $F^2$ (to be precise, smooth on $TM \setminus 0$ and, generally, no more than $C^1$ along the zero section) and strong convexity (assured by the positive-definiteness of the fundamental tensor), existence and uniqueness of geodesics for given initial conditions are guaranteed. So one can define the \textbf{exponential map} on an open subset $\mathcal{U} \subset TM$, for those vectors $v$ such that the maximal interval of existence, $I_v$,  of the geodesic $\gamma_v$ satisfying $\gamma_v^{\prime}(0) = v$, contains the value $1$. Indeed, the map $\exp^F: \mathcal{U} \to M$ is given by $\exp^F(v) := \gamma_v(1)$. It is $C^1$ over the zero section and smooth away from there.

We call attention to the fact that the geodesics $\gamma_v$ and $\gamma_{-v}$ are not necessarily related, because $F$ need not be reversible, i.e. $F(v) \neq F(-v)$. In consequence, it is possible for a geodesic $\gamma_v$ to have a maximal interval of existence $I_v$ containing the rays $[0,\infty)$ or $(-\infty,0]$ independently; e.g. \cite[\S~12.6]{Bao2000introduction}. Hence, we say the Finsler manifold $(M,F)$ is \textbf{forward (resp. backward) complete} if every geodesic can be forward (resp. backward) extended indefinitely. When $(M,F)$ is both forward and backward complete, it is just called \textbf{complete}.

The \textbf{length} of a piecewise smooth curve $\gamma: [a, b] \to M$ is given by the integral of $F$ along the curve:
\begin{equation*}
L(\gamma) = \int_a^b F(\gamma^{\prime}(t)) \, dt .
\end{equation*}
This functional is invariant under positive reparameterizations due to the positive 1-homogeneity of $F$. So the \textbf{distance} between two submanifolds, $P$ and $N$, is defined as
\begin{equation*}
d(P, N) = \inf \{ L(\gamma) : \gamma(a) \in P, \gamma(b) \in N \} .
\end{equation*}
Particularly, the distance between two points is a distance function in the usual sense except for reversibility, i.e. $d(p,q) \neq d(q,p)$. Nonetheless, the concepts of geodesic and metric completeness are equivalent. Geodesics minimize the length over sufficiently small intervals. More generally, a geodesic $\gamma$ (locally) minimizes the distance from a submanifold $P$ to a point $q = \gamma(b)$ if and only if $\gamma$ is $F$-orthogonal to $P$ at its starting point $\gamma(a) \in P$.
\section{\texorpdfstring{$(\alpha,\beta)$}{(alpha,beta)}-metrics}

\subsection{Characterizations}

A Minkowski norm $F$ on $V$ will be called an \textbf{$(\alpha,\beta)$-norm} if it takes the form $F = \alpha \phi \big( \tfrac{\beta}{\alpha} \big)$ for a Euclidean norm $\alpha$, a linear functional $\beta$, and a real function $\phi : (-b_0,b_0) \to \mathbb{R}$ with $\alpha^{\ast}(\beta) \in [0,b_0)$.

The $(\alpha,\beta)$-norms are the most symmetric non-Euclidean Minkowski norms. Yu and Zhu make this geometric meaning explicit in \cite[Theorem~2.2]{Yu2011new}. Here, we show an alternative proof for vector spaces of dimension $n \geq 2$. The $1$-dimensional case is trivial by \cite[Remark~3]{Yu2011new}.

\begin{proposition}
\label{AlphaBetaSymmetry}
Let $V$ be a real vector space of dimension $n \geq 2$. Let $\alpha$ be a Euclidean norm, and $\beta$ a linear functional on $V$. Suppose $F$ is a continuous and positively $1$-homogeneous function on $V$ that is positive of class $C^k$ on $V \setminus 0$. If $F$ is invariant under the group
\begin{equation*}
\mathbb{O}_{\beta}(\alpha) = \{ Q \in \mathbb O(\alpha): \beta \circ Q = \beta\},
\end{equation*}
then there exists a positive $C^k$ function $\phi: (-b_0, b_0) \to \mathbb{R}$ such that
\begin{equation*}
F = \alpha \phi \big( \tfrac{\beta}{\alpha} \big) ,
\end{equation*}
where $ \alpha^{\ast}(\beta) \in [0,b_0)$. In particular, the indicatrix $\mathcal I^F$ is a hypersurface of revolution around the axis defined by $\bb$, the $\alpha$-dual of $\beta$. Conversely, if $F = \alpha \phi\big( \tfrac{\beta}{\alpha} \big)$ for some function $\phi$ (as above), then $F$ is invariant under $\mathbb{O}_{\beta}(\alpha)$.
\end{proposition}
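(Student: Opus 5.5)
The plan is to use the orbit structure of $\mathbb{O}_{\beta}(\alpha)$ on $V$ and define $\phi$ by restricting $F$ to a fixed two-dimensional half-plane that every orbit meets exactly once.

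\emph{Setup.} If $\beta = 0$, then $\mathbb{O}_\beta(\alpha)=\mathbb O(\alpha)$ acts transitively on the $\alpha$-unit sphere. Hence $F$ is constant on it, say equal to $c>0$, and $F = c\,\alpha$; take $\phi \equiv c$ on any interval $(-b_0,b_0)$. Now assume $\beta \neq 0$. Set $b := \alpha^{\ast}(\beta) = \alpha(\bb) > 0$ and $e_1 := \bb/b$. For any $v\in V\setminus 0$, write
\[
v = \alpha(v)\Big(s\,e_1 + \sqrt{1-s^2}\,w\Big), \qquad s := \frac{\ga(v,e_1)}{\alpha(v)} = \frac{\beta(v)}{b\,\alpha(v)} \in [-1,1],
\]
with $w$ an $\alpha$-unit vector orthogonal to $e_1$ (any such $w$ if $s=\pm1$). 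The group $\mathbb{O}_\beta(\alpha)$ consists exactly of the orthogonal maps fixing $e_1$, so it is isomorphic to $\mathbb O(e_1^{\perp})$. Since $n\ge 2$, this group acts transitively on the unit sphere of $e_1^\perp$, which is nonempty (for $n=2$ it is the two points $\pm w$, swapped by a reflection). Fix a unit vector $w_0 \perp e_1$. By invariance and homogeneity,
\[
F(v) = \alpha(v)\, f(s), \qquad f(s) := F\big(s\,e_1+\sqrt{1-s^2}\,w_0\big), \quad s\in[-1,1].
\]
Setting $\phi(t) := f(t/b)$ gives $F = \alpha\,\phi(\beta/\alpha)$ on $V\setminus0$, and at $0$ both sides vanish. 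The function $\phi$ is positive and continuous on $[-b,b]$.

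\emph{Main obstacle.} The step I expect to be hardest is showing that $\phi$ is $C^k$ and extends to an open interval $(-b_0,b_0)$ with $b_0>b$. The map $s\mapsto s e_1+\sqrt{1-s^2}w_0$ is not smooth at $s=\pm1$, so $f$ cannot be read off directly there. My plan is to use a different slice that is transverse to the orbits, namely the affine line $\ell(t) := t\,e_1 + w_0$ for $t\in\mathbb R$. Along this line $\alpha(\ell(t))=\sqrt{1+t^2}$ and $\beta(\ell(t))/\alpha(\ell(t)) = bt/\sqrt{1+t^2}$. The function $t\mapsto F(\ell(t))$ is $C^k$, since $\ell(t)\neq 0$. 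The map $t\mapsto \sigma(t):=bt/\sqrt{1+t^2}$ is a smooth diffeomorphism $\mathbb R\to(-b,b)$, so
\[
\phi(\sigma) = \frac{F(\ell(t(\sigma)))}{\sqrt{1+t(\sigma)^2}}
\]
is $C^k$ on $(-b,b)$. To reach the endpoints $\pm b$ and beyond, I will work near $\pm e_1$ with the slice $u\mapsto \pm e_1+u\,w_0$ and use evenness. The function $u\mapsto F(\pm e_1+uw_0)$ is $C^k$ and even in $u$, because the reflection $w_0\mapsto -w_0$ lies in $\mathbb{O}_\beta(\alpha)$. Along this slice,
\[
\frac{\beta}{\alpha} = \frac{\pm b}{\sqrt{1+u^2}},
\]
which is a function of $u^2$. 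By Whitney's theorem on even functions, a $C^k$ even function of $u$ is a function of $u^2$ with sufficient regularity; in the $C^\infty$ case this is exactly Whitney's result. I am somewhat uncertain here about exact finite-$k$ regularity, since a loss of derivatives is possible. If that cannot be handled cleanly, I would fall back to stating regularity on the open interval $(-b,b)$ and extending $\phi$ past $\pm b$ by an arbitrary $C^k$ extension. Such an extension is harmless, because only the values of $\phi$ on $[-b,b]$ are used by $F$; this gives $b_0>b$ as required.

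\emph{Consequences.} The indicatrix $\{\alpha\,\phi(\beta/\alpha)=1\}$ is invariant under $\mathbb{O}_\beta(\alpha)$, the group of rotations about the $\bb$-axis, so it is a hypersurface of revolution around that axis. For the converse, for $Q\in\mathbb{O}_\beta(\alpha)$ we have $\alpha\circ Q=\alpha$ and $\beta\circ Q=\beta$, so $F\circ Q=F$ immediately.
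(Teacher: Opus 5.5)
Your core construction matches the paper's proof: you define $\phi$ on $[-b,b]$ using the transitivity of $\mathbb{O}_\beta(\alpha)\cong\mathbb{O}(e_1^\perp)$ on each level set of $\beta/\alpha$ in the $\alpha$-unit sphere. Your $\beta=0$ case, the $C^k$ regularity on the open interval $(-b,b)$ via the slice $t\mapsto te_1+w_0$, and the converse are also correct. The paper itself stops at well-definedness and never addresses the regularity of $\phi$ or its extension past $\pm\alpha^\ast(\beta)$.

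\textbf{The gap: endpoint regularity for finite $k$.} Your fallback does not work.
A function that is $C^k$ on $(-b,b)$ need not have any $C^k$ extension, because its derivatives can blow up at $\pm b$. Moreover, the values $\phi(\pm b)$ \emph{are} used by $F$, in the directions $\pm\bb$. So any $C^k$ function on $(-b_0,b_0)$ representing $F$ restricts to a function that is $C^k$ up to the endpoints of $[-b,b]$, which is exactly what the fallback was meant to avoid proving.

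In fact no argument can close this, because for finite $k\ge 1$ the claim is false. Take $V=\mathbb{R}^2$, $\alpha$ Euclidean and $\beta(x)=x_1$, so that $b=1$ and $\mathbb{O}_\beta(\alpha)=\{\mathrm{id},\,(x_1,x_2)\mapsto(x_1,-x_2)\}$. In polar coordinates set $F=r f(\theta)$, where $f$ is positive, even, $2\pi$-periodic, $C^2$, and equal to $2+|\theta|^3$ near $\theta=0$.
\begin{itemize}
\item Then $F$ is $C^2$ on $V\setminus 0$ and invariant.
\item But $F=\alpha\phi(\beta/\alpha)$ forces $\phi(\cos\theta)=f(\theta)$. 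Write $(\arccos s)^2=H(1-s)$ with $H$ analytic, $H(0)=0$, $H'(0)=2$. Then $\phi(s)=2+H(1-s)^{3/2}$ near $s=1$.
\item Hence $\phi''(s)\to+\infty$ as $s\to 1^-$, so no $C^2$ function on $(-b_0,b_0)$ with $b_0>1$ agrees with $\phi$ on $[-1,1]$.
\end{itemize}
For other finite $k\ge1$, replace $|\theta|^3$ by $|\theta|^{k+1/2}$. This is precisely the loss of derivatives in Whitney's theorem that you suspected: an even $C^{2m}$ function is $G(u^2)$ with $G$ in general only $C^m$.

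\textbf{What survives.} The statement is correct as written only for $k=\infty$, which is the case the paper actually uses. There your main plan finishes the proof once you make the extension explicit:
\begin{itemize}
\item By Whitney, $F(e_1+uw_0)/\sqrt{1+u^2}=G(u^2)$ with $G$ smooth on $[0,\varepsilon)$.
\item Extend $G$ smoothly to $(-\varepsilon,\varepsilon)$ (Borel/Seeley) and set $\phi(\sigma):=G(b^2/\sigma^2-1)$ for $\sigma$ near $b$. This agrees with your $\phi$ for $\sigma\le b$ and is smooth across $b$.
\item Do the same at $-b$ using $-e_1+uw_0$, and shrink $b_0>b$ so that $\phi$ stays positive.
\end{itemize}
For finite $k$, the honest conclusion is a $\phi\in C^{\lfloor k/2\rfloor}(-b_0,b_0)$ that is $C^k$ on $(-b,b)$. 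You should drop the fallback and either assume smoothness or weaken the regularity claim accordingly.
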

\begin{proof}
For each $s \in [-\alpha^{\ast}(\beta), \alpha^{\ast}(\beta)]$, there exists $v \in V \setminus 0$ such that $\tfrac{\beta(v)}{\alpha(v)} = s$, because $\dim V = n \geq 2$. Define
\begin{equation*}
\phi(s) := \frac{F(v)}{\alpha(v)} .
\end{equation*}
It suffices to show that $\phi$ is well-defined. Let $w \in V \setminus 0$ be any vector such that $\tfrac{\beta(w)}{\alpha(w)} = \tfrac{\beta(v)}{\alpha(v)}$. Since both, $\tfrac{v}{\alpha(v)}$ and $\tfrac{w}{\alpha(w)}$, lie on the $\alpha$-unit sphere and have the same $\beta$-value, there exists an isometry $Q \in \mathbb{O}_{\beta}(\alpha)$ such that $Q \left( \tfrac{v}{\alpha(v)} \right) = \tfrac{w}{\alpha(w)}$. Using positive 1-homogeneity and $\mathbb{O}_{\beta}(\alpha)$-invariance of $F$, we have that
\begin{equation*}
\frac{F(w)}{\alpha(w)} = F \left( \frac{w}{\alpha(w)} \right) = F\left(Q\left(\frac{v}{\alpha(v)}\right)\right) = F\left(\frac{v}{\alpha(v)}\right) = \frac{F(v)}{\alpha(v)},
\end{equation*}
which proves that $\phi$ is well-defined.

Conversely, if $F = \alpha \phi \big( \tfrac{\beta}{\alpha} \big)$, then for any $Q \in \mathbb{O}_{\beta}(\alpha)$, we have $\alpha(Qv) = \alpha(v)$ and $\beta(Qv) = \beta(v)$. Thus,
\begin{equation*}
F(Q v) = \alpha(Qv) \phi \!\left( \frac{\beta(Qv)}{\alpha(Qv)} \right) = \alpha(v) \phi \!\left( \frac{\beta(v)}{\alpha(v)} \right) = F(v),
\end{equation*}
completing the proof.
\end{proof}

Next, a computational characterization for $(\alpha,\beta)$-norms.

\begin{lemma}\cite[Lemma~1.1.2]{Chern2005riemann}
On a vector space $V$, let $\alpha$ be a Euclidean norm and $\beta$ a linear functional. Let $\phi: (-b_0, b_0) \to \mathbb R$ be a positive smooth function such that $\alpha^*(\beta)\in [0,b_0)$. The function $F = \alpha \phi \big( \tfrac{\beta}{\alpha} \big)$ is a Minkowski norm if and only if
\begin{equation*}
\Phi(s, b) := (\phi(s) - s\phi^{\prime}(s)) + (b^2 - s^2)\phi^{\prime\prime}(s) > 0 ,
\end{equation*}
for all $s$ and $b$ with $\vert s \vert \leq b < b_0$. In particular, if $F$ is an $(\alpha, \beta)$-Minkowski norm, then
\begin{equation}
\label{Lambda}
\lambda(s) := \phi(s) - s\phi^{\prime}(s) > 0 \, \text{ for all } \, \vert s \vert < b_0 .
\end{equation}
\end{lemma}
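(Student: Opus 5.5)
The plan is to compute the fundamental tensor $\g^F_v$ explicitly in terms of $\ga$, $\beta$ and $\phi$, and then read off positive definiteness from its determinant together with a continuity (deformation) argument. I will read the equivalence the way Chern and Shen do: $F$ is a Minkowski norm for every linear functional $\beta$ with $\alpha^{\ast}(\beta)=b<b_0$ if and only if $\Phi(s,b)>0$ whenever $\vert s\vert\leq b<b_0$.

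\emph{Step 1: the fundamental tensor.} Fix an $\ga$-orthonormal basis, write $b_i$ for the components of $\beta$, and set $\alpha_i := a_{ij}y^j/\alpha$ and $s=\beta/\alpha$. Differentiating $\tfrac12 F^2 = \tfrac12\alpha^2\phi(s)^2$ twice in $y$, using $\partial s/\partial y^i = (b_i - s\alpha_i)/\alpha$, gives
\begin{equation*}
g_{ij} = \rho\, a_{ij} + \rho_0\, b_ib_j + \rho_1\,(b_i\alpha_j + b_j\alpha_i) + \rho_2\, \alpha_i\alpha_j .
\end{equation*}
Here $\rho=\phi(\phi-s\phi')$, $\rho_0=\phi\phi''+\phi'^2$, $\rho_1 = -s(\phi\phi''+\phi'^2)+\phi\phi'$, and $\rho_2 = s\big(s(\phi\phi''+\phi'^2)-\phi\phi'\big)$. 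This is a routine but careful calculation.

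\emph{Step 2: the determinant.} The tensor $g$ is a rank-two perturbation of $\rho\,\ga$ in the span of $b$ and $\alpha_\cdot$. Applying the matrix determinant lemma twice, or computing a $2\times2$ determinant on that span, should give
\begin{equation*}
\det(g_{ij}) = \phi^{n+1}(\phi - s\phi')^{n-2}\,\Phi(s,b)\,\det(a_{ij}), \qquad b=\alpha^{\ast}(\beta).
\end{equation*}
I expect this algebra, together with checking which eigenvalues are which, to be the main obstacle. To organise it I would diagonalise $g$ explicitly. On the $\ga$-orthogonal complement of $\mathrm{span}\{\bb, y\}$, $g$ acts as $\rho\,\ga$. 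On the (at most) two-dimensional span it is a $2\times2$ block whose determinant is $\phi^{3}\lambda(s)\,\Phi(s,b)$ up to positive factors, and whose trace can be controlled.

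\emph{Step 3: sufficiency.} Assume $\Phi>0$. Since $\Phi(s,\vert s\vert)=\lambda(s)$, we get $\lambda>0$, hence $\rho>0$, so $g$ is positive on the complement. On the $2$-plane the block has positive determinant. Its sign is fixed by continuity: deform $\beta_t=t\beta$ for $t\in[0,1]$. The condition $\vert s\vert\le tb$ stays inside the admissible range, so $\det g$ never vanishes along the deformation. At $t=0$ we have $g=\phi(0)^2\ga$, which is positive definite. Therefore $g$ remains positive definite for all $t$, and $F$ is a Minkowski norm (smoothness away from $0$ and positive homogeneity are clear).

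\emph{Step 4: necessity and the final claim.} Conversely, if $F$ is a Minkowski norm for every $\beta$ with $\alpha^{\ast}(\beta)=b<b_0$, positivity of $g$ gives positivity of $\det g$ and of the restriction of $g$ to the complement, so $\rho>0$. From the determinant formula this forces $\Phi(s,b)>0$. To get this for every $\vert s\vert\le b$, use that $s=\beta(y)/\alpha(y)$ attains every value in $[-b,b]$ when $n\geq2$. In dimension $2$ the complement is trivial, and I would instead use the $2\times2$ block directly. Finally, taking $b=\vert s\vert$ for any $\vert s\vert<b_0$ gives $\Phi(s,\vert s\vert)=\phi(s)-s\phi'(s)=\lambda(s)>0$, which is \eqref{Lambda}.
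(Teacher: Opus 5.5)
Your proposal is correct, and it is the standard argument for the cited result. The paper gives no proof of its own and only refers to Chern--Shen. That argument computes $\g^F_y$, uses $\det \g^F_y=\phi^{n+1}\lambda^{n-2}\Phi(s,b)\det\ga$, and deforms to the Riemannian case. You are also right to read the equivalence as quantified over all $\beta$ with $\alpha^{\ast}(\beta)<b_0$: for a single fixed $\beta$ (e.g.\ $\beta=0$) the necessity direction and the claim $\lambda>0$ on all of $(-b_0,b_0)$ both fail.

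There is one algebraic slip to fix. Take an $\ga$-orthonormal basis $\{\hat y,e\}$ of $\mathrm{span}\{\bb,y\}$ with $\bb=s\hat y+c\,e$ and $c=\sqrt{b^2-s^2}$. The $2\times2$ block then has entries $\phi^2$, $c\,\phi\phi'$ and $\rho+c^2(\phi\phi''+\phi'^2)$, so its determinant is $\phi^3\Phi(s,b)$, with no factor $\lambda$. This agrees with your own global formula at $n=2$, and it is what makes the $n=2$ necessity case immediate; with the extra $\lambda$ you would be stuck there, since the complement is trivial. Moreover, the diagonal entry $\g^F_y(\hat y,\hat y)=\phi(s)^2$ is positive, so the block is positive definite exactly when $\Phi(s,b)>0$. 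Your deformation $\beta_t=t\beta$ is therefore valid but not actually needed.
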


On a manifold $M$, let $\alpha$ be a Riemannian norm and $\beta$ a $1$-form. A Finsler metric $F$ on $M$ is said an \textbf{$(\alpha,\beta)$-metric} when there exists a smooth positive function $\phi: (-b_0,b_0) \to \mathbb{R}$ such that $F_p = \alpha_p \, \phi \big( \tfrac{\beta_p}{\alpha_p} \big)$ for all $p \in M$.

\subsection{Isometries}

The isometries of $(\alpha,\beta)$-metrics are, mostly, diffeomorphisms that preserve both, the Riemannian norm $\alpha$ and the $1$-form $\beta$; see \cite{Voicu2023finsler}. At the present, we write in befitting language the results subsequently needed.

\begin{proposition}
\label{AlphaBeta!Phi}
Let $\alpha, \tilde{\alpha}$ be even real-valued functions on the vector space $V$, i.e. $\alpha (-v) = \alpha (v)$ for all $v \in V$, and likewise for $\tilde \alpha$. Let $\beta, \tilde{\beta}$ be linear functionals on $V$, and let $\phi: (-b_0,b_0) \to \mathbb R$ be a positive smooth function that has nowhere zero derivative. Assume that $\tfrac{\beta(v)}{\alpha(v)}, \tfrac{\tilde \beta(v)}{\tilde \alpha(v)} \in (-b_0,b_0)$ for all $v \in V$. If
\begin{equation*}
\alpha \, \phi \bigg(  \frac{\beta}{\alpha} \bigg) = \tilde{\alpha} \, \phi \bigg( \frac{\tilde{\beta}}{\tilde{\alpha}} \bigg) ,
\end{equation*}
then $\alpha = \tilde{\alpha}$ and $\beta = \tilde{\beta}$.
\end{proposition}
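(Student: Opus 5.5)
The plan is to proceed in two stages: first identify the kernels of $\beta$ and $\tilde\beta$ using the evenness of $\alpha,\tilde\alpha$, and then compare the two expressions along the remaining direction. Throughout, write $F := \alpha\,\phi\big(\tfrac{\beta}{\alpha}\big) = \tilde\alpha\,\phi\big(\tfrac{\tilde\beta}{\tilde\alpha}\big)$. The hypothesis that $\phi'$ never vanishes makes $\phi$ strictly monotone, hence injective on $(-b_0,b_0)$; this is the only property of $\phi$ I expect to use besides $\phi>0$. I take for granted that $\alpha,\tilde\alpha$ are positive on $V\setminus 0$, as implicit in the ratios being defined.

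\emph{Step 1: equal kernels.} Let $v\neq 0$ with $\beta(v)=0$. Evenness of $\alpha$ gives $F(v)=\alpha(v)\phi(0)=F(-v)$. Set $\tilde s:=\tilde\beta(v)/\tilde\alpha(v)$. Evenness of $\tilde\alpha$ and linearity of $\tilde\beta$ turn $F(v)=F(-v)$ into
\begin{equation*}
\tilde\alpha(v)\,\phi(\tilde s)=\tilde\alpha(v)\,\phi(-\tilde s).
\end{equation*}
Dividing by $\tilde\alpha(v)>0$ and using injectivity gives $\tilde s=-\tilde s$, so $\tilde\beta(v)=0$. Hence $\ker\beta\subset\ker\tilde\beta$, and by symmetry the kernels coincide. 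Consequently $\tilde\beta=c\beta$ for some $c\in\mathbb R$; if $\beta=0$ then also $\tilde\beta=0$. On $\ker\beta$ we further get $\alpha(v)\phi(0)=\tilde\alpha(v)\phi(0)$, so $\alpha=\tilde\alpha$ there since $\phi(0)>0$.

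\emph{Step 2: the sign of $c$.} For $v$ with $\beta(v)\neq0$, evenness gives
\begin{equation*}
F(v)-F(-v)=\alpha(v)\big(\phi(s)-\phi(-s)\big), \qquad s=\beta(v)/\alpha(v).
\end{equation*}
Since $\phi$ is strictly monotone, the sign of this difference is $\operatorname{sign}(\phi')\cdot\operatorname{sign}(\beta(v))$. The same computation with the tilde data shows the sign is also $\operatorname{sign}(\phi')\cdot\operatorname{sign}(c\,\beta(v))$. Comparing the two, $c>0$.

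\emph{Step 3: showing $c=1$ and $\alpha=\tilde\alpha$ off $\ker\beta$ (main obstacle).} For such $v$ we have the two equations
\begin{equation*}
\tilde\alpha\,\phi(c\beta/\tilde\alpha)=\alpha\,\phi(\beta/\alpha), \qquad \tilde\alpha\,\phi(-c\beta/\tilde\alpha)=\alpha\,\phi(-\beta/\alpha),
\end{equation*}
in the unknowns $\tilde\alpha(v)$ and $c$. Mere evenness of $\alpha,\tilde\alpha$ does not obviously pin these down. My first attempt would exploit scaling: $\beta(\lambda v)=\lambda\beta(v)$ is linear in $\lambda$, so running $\lambda$ over $\mathbb R$ produces a one-parameter family of such pairs of equations with a common $c$. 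Fixing $\tilde\alpha(\lambda v)$ by the first equation and substituting into the second should force $c=1$; for instance, consider $\lambda\to0$, where $\beta(\lambda v)\to 0$, and use continuity together with Step 1.

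If this is insufficient in the bare generality of even functions, I would use the positive homogeneity available in every application, namely $\alpha,\tilde\alpha$ Euclidean norms. Restrict $F$ to the affine hyperplane $\{\beta=1\}=v_0+\ker\beta$. There $F=g(\alpha)$ with $g(a)=a\,\phi(1/a)$, and also $F=h(\tilde\alpha)$ with $h(a)=a\,\phi(c/a)$. On this hyperplane both $\alpha$ and $\tilde\alpha$ restrict to functions whose level sets are concentric quadrics centred at the $\alpha$-orthogonal projection of $0$, respectively its $\tilde\alpha$ analogue. The plan is to compare their minima and their growth along rays in $\ker\beta$, where $\alpha=\tilde\alpha$ by Step 1. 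This should yield $c=1$ and then $\alpha=\tilde\alpha$ everywhere.
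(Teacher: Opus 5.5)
\textbf{There is a genuine gap.} Your Steps~1 and~2 are correct, but the proof stops where the actual work begins. Step~3 is never carried out, and neither fallback closes it.

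The scaling idea yields no new information. In the stated generality, $\alpha,\tilde\alpha$ are merely even, with no continuity or homogeneity, so there is no limit $\lambda\to 0$ to take. When $\alpha,\tilde\alpha$ are norms, the ratio $\beta(\lambda v)/\alpha(\lambda v)$ does not depend on $\lambda>0$, so the argument of $\phi$ does not tend to $0$. The equations at $\lambda v$ are then just rescaled copies of those at $\pm v$. The second fallback adds a hypothesis absent from the statement (Euclidean norms), and it is only a sketch (``should yield $c=1$'').

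\textbf{The missing idea.} The pair of equations you wrote in Step~3 already determines everything, pointwise, once you divide one by the other. Fix $v\neq 0$ and put $s=\beta(v)/\alpha(v)$, $\tilde s=\tilde\beta(v)/\tilde\alpha(v)$. Evenness and linearity give
\begin{equation*}
\alpha(v)\,\phi(s)=\tilde\alpha(v)\,\phi(\tilde s), \qquad \alpha(v)\,\phi(-s)=\tilde\alpha(v)\,\phi(-\tilde s).
\end{equation*}
The quotient eliminates the unknown amplitudes $\alpha(v),\tilde\alpha(v)$ and yields $h(s)=h(\tilde s)$, where $h(t):=\phi(t)/\phi(-t)$. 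We have $(\log h)'(t)=\phi'(t)/\phi(t)+\phi'(-t)/\phi(-t)$. Since $\phi>0$ and $\phi'$ is continuous and nowhere zero on an interval, $\phi'$ has constant sign, so both terms share that sign. Hence $h$ is strictly monotone, and $s=\tilde s$. The first equation together with $\phi(s)>0$ then gives $\alpha(v)=\tilde\alpha(v)$, and therefore $\beta(v)=\tilde\beta(v)$.

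This is exactly the paper's proof. It needs no homogeneity, and it makes your Steps~1 and~2 unnecessary (equality of kernels, $\tilde\beta=c\beta$, $c>0$). Your sign argument in Step~2 is the ``difference'' version of this comparison. Taking the \emph{quotient} instead is what removes $\alpha(v)$ and $\tilde\alpha(v)$ and finishes the argument.
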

\begin{proof}
Fix $v \in V \setminus 0$. By assumption, we have that
\begin{subequations}
\begin{equation}
\label{AlphaBetaTil}
\alpha(v)\,\phi\!\left(\frac{\beta(v)}{\alpha(v)}\right) = \tilde{\alpha}(v)\,\phi \bigg(\frac{\tilde{\beta}(v)}{\tilde{\alpha}(v)}\bigg) ,
\end{equation}
\begin{equation}
\label{AlphaBetaTilNeg}
\alpha(v)\,\phi\!\left(-\frac{\beta(v)}{\alpha(v)}\right) = \tilde{\alpha}(v)\,\phi \bigg( \!-\frac{\tilde{\beta}(v)}{\tilde{\alpha}(v)}\bigg) .
\end{equation}
\end{subequations}
Define the function $h(s) := \tfrac{\phi(s)}{\phi(-s)}$. Dividing \eqref{AlphaBetaTil} by \eqref{AlphaBetaTilNeg} yields
\begin{equation*}
h \! \left( \frac{\beta(v)}{\alpha(v)} \right) = h \bigg( \frac{\tilde{\beta}(v)}{\tilde{\alpha}(v)} \bigg) .
\end{equation*}
Since $\phi$ is positive and has nowhere zero derivative, $h$ is positive and has nowhere zero derivative. Hence,
\begin{equation}
\label{BetaOverAlphaTil}
\frac{\beta(v)}{\alpha(v)}=\frac{\tilde{\beta}(v)}{\tilde{\alpha}(v)}.
\end{equation}
Replacing \eqref{BetaOverAlphaTil} into \eqref{AlphaBetaTil}, we conclude that $\alpha(v) =\tilde{\alpha}(v)$. Now, replacing this equality into \eqref{BetaOverAlphaTil}, we infer  that $\beta(v)=\tilde{\beta}(v)$.
\end{proof}

\begin{corollary}
\label{AlphaBetaIsometry}
Let $\phi: (-b_0,b_0) \to \mathbb R$ be a positive smooth function. Let $\alpha$ be a Euclidean norm, and $\beta$ a linear functional on $V$ such that $\alpha^*(\beta)\in [0,b_0)$. Consider the function $F := \alpha\phi \big(\tfrac{\beta}{\alpha} \big),$ and the groups
\begin{equation*}
\mathrm{Iso} (F) := \{ \sigma \in \mathrm{Diff} (M) : \sigma^{\ast} F = F \} , \text{ and } \; \mathrm{Iso}_{\beta} (\alpha) := \{ \sigma \in \mathrm{Iso} (\alpha) : \sigma^{\ast} \beta = \beta \} .
\end{equation*}
Then $\mathrm{Iso}_{\beta}(\alpha) \subset \mathrm{Iso}(F)$. If, additionally, the derivative $\phi^{\prime}$ is nowhere vanishing, then $\mathrm{Iso}(F)=\mathrm{Iso}_{\beta}(\alpha)$.
\end{corollary}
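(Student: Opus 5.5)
The inclusion $\mathrm{Iso}_{\beta}(\alpha) \subset \mathrm{Iso}(F)$ follows by a pointwise computation. Here $\alpha,\beta$ are read as a Riemannian norm and a $1$-form on $M$, and $\sigma^\ast$ acts through $d\sigma$. Take $\sigma \in \mathrm{Iso}_\beta(\alpha)$ and $v \in T_pM\setminus 0$. By hypothesis $\alpha(d\sigma_p v)=\alpha(v)$ and $\beta(d\sigma_p v)=\beta(v)$, so
\begin{equation*}
(\sigma^\ast F)(v) = \alpha(d\sigma_p v)\,\phi\!\left(\frac{\beta(d\sigma_p v)}{\alpha(d\sigma_p v)}\right) = \alpha(v)\,\phi\!\left(\frac{\beta(v)}{\alpha(v)}\right)=F(v).
\end{equation*}
This is the same computation as the converse part of Proposition~\ref{AlphaBetaSymmetry}, carried out fibrewise. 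At $v=0$ both sides vanish.

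For the reverse inclusion, assume $\phi'$ vanishes nowhere and let $\sigma \in \mathrm{Iso}(F)$. Define the pulled-back data $\tilde\alpha := \sigma^\ast\alpha$ and $\tilde\beta := \sigma^\ast\beta$, that is, $\tilde\alpha(v)=\alpha(d\sigma v)$ and $\tilde\beta(v)=\beta(d\sigma v)$. Since $d\sigma_p$ is a linear isomorphism, $\tilde\alpha_p$ is again a Euclidean norm on $T_pM$, hence even, and $\tilde\beta_p$ is linear. Moreover $\tilde\beta(v)/\tilde\alpha(v)=\beta(w)/\alpha(w)$ with $w=d\sigma v$, so this ratio stays in $[-\alpha^\ast(\beta),\alpha^\ast(\beta)]\subset(-b_0,b_0)$ and all ratios lie in the domain of $\phi$. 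The isometry condition $\sigma^\ast F=F$ then reads, on each $T_pM\setminus 0$,
\begin{equation*}
\tilde\alpha\,\phi\!\left(\frac{\tilde\beta}{\tilde\alpha}\right) = \alpha\,\phi\!\left(\frac{\beta}{\alpha}\right).
\end{equation*}
Proposition~\ref{AlphaBeta!Phi} applies on $V=T_pM$ and gives $\tilde\alpha_p=\alpha_p$ and $\tilde\beta_p=\beta_p$ for every $p$. Hence $\sigma^\ast\alpha=\alpha$ and $\sigma^\ast\beta=\beta$, so $\sigma\in\mathrm{Iso}_\beta(\alpha)$.

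The only step needing any care is checking that Proposition~\ref{AlphaBeta!Phi} is applicable: its hypotheses (evenness of both norms and the ratios lying in $(-b_0,b_0)$) must hold for the pulled-back pair, and both follow from the linearity of $d\sigma_p$ as verified above. Everything else is formal.
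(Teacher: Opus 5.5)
Your proposal is correct and follows the paper's own argument. The inclusion $\mathrm{Iso}_{\beta}(\alpha)\subset\mathrm{Iso}(F)$ is the same direct computation, and the reverse inclusion applies Proposition~\ref{AlphaBeta!Phi} fibrewise to the pair $(\sigma^\ast\alpha,\sigma^\ast\beta)$. You also check that this pulled-back pair satisfies the proposition's hypotheses (evenness, and ratios lying in $(-b_0,b_0)$), a step the paper leaves implicit.
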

\begin{proof}
If $\sigma \in \mathrm{Iso}_{\beta}(\alpha)$, it follows trivially  that $\sigma^{\ast} (F) = F$. Hence, $\mathrm{Iso}_{\beta} (\alpha) \subset \mathrm{Iso} (F)$. Suppose $\phi^{\prime}$ is nowhere vanishing. If $\sigma \in \mathrm{Iso}(F)$, then
\begin{equation*}
F = ( \sigma^{\ast} \alpha ) \phi \!\left( \frac{\sigma^{\ast} \beta}{\sigma^{\ast} \alpha} \right) .
\end{equation*}
It follows from Proposition~\ref{AlphaBeta!Phi} that $\sigma^{\ast} \alpha = \alpha$, and $\sigma^{\ast} \beta = \beta$. Thus, $\sigma \in \mathrm{Iso}_\beta (\alpha)$.
\end{proof}

\begin{lemma}
\label{BetaOverAlphaCte}
Let $F=\alpha \phi (\tfrac{\beta}{\alpha})$ be an $(\alpha,\beta)$-norm on the vector space $V$, and $\bb$ the $\alpha$-dual vector to $\beta$. If $\vt$ is a subspace of $V$ that contains $\bb$, then the ratio $\frac{\beta}{\alpha}$ is constant over the $F$-orthogonal cone to $\vt$, i.e. $\frac{\beta(v)}{\alpha(v)} = c \in \mathbb R$ for all $v \in \nu(\vt)$.
\end{lemma}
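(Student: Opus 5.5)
We compute the Legendre-type map $v \mapsto \g^F_v(v,\cdot)$ explicitly for $F=\alpha\phi(\beta/\alpha)$ and read off the orthogonality condition. By Lemma~\ref{FundTensor}(iii), $\g^F_v(v,w)=\tfrac12\,\tfrac{d}{dt}F^2(v+tw)\vert_{t=0}=F(v)\,dF_v(w)$. Writing $s=\beta(v)/\alpha(v)$ and differentiating, using $d\alpha_v(w)=\ga(v,w)/\alpha(v)$ and $d\beta_v(w)=\beta(w)=\ga(\bb,w)$, we get
\begin{equation*}
dF_v(w)=\phi(s)\frac{\ga(v,w)}{\alpha(v)}+\phi^{\prime}(s)\Big(\ga(\bb,w)-s\frac{\ga(v,w)}{\alpha(v)}\Big)
=\lambda(s)\,\frac{\ga(v,w)}{\alpha(v)}+\phi^{\prime}(s)\,\ga(\bb,w),
\end{equation*}
with $\lambda(s)=\phi(s)-s\phi^{\prime}(s)>0$ as in \eqref{Lambda}.

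Hence $v\in\nu(\vt)$ if and only if the vector $\lambda(s)\,v/\alpha(v)+\phi^{\prime}(s)\,\bb$ is $\ga$-orthogonal to $\vt$ (since $F(v)>0$). Because $\bb\in\vt$, we may test with $w=\bb$. Writing $b:=\alpha(\bb)=\alpha^{\ast}(\beta)$ and noting $\ga(v,\bb)/\alpha(v)=\beta(v)/\alpha(v)=s$, the condition becomes
\begin{equation*}
\lambda(s)\,s+\phi^{\prime}(s)\,b^2=0 .
\end{equation*}
So every $v\in\nu(\vt)$ has its ratio $s$ lying in the zero set of $g(s):=s\lambda(s)+b^2\phi^{\prime}(s)=s\phi(s)+(b^2-s^2)\phi^{\prime}(s)$ on $[-b,b]$. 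If $\bb=0$ then $\beta=0$ and $s\equiv 0$, so we may assume $b>0$.

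It remains to show that $g$ has at most one zero in $[-b,b]$; this is the main point. Differentiating gives
\begin{equation*}
g^{\prime}(s)=\phi(s)+s\phi^{\prime}(s)-2s\phi^{\prime}(s)+(b^2-s^2)\phi^{\prime\prime}(s)=\Phi(s,b),
\end{equation*}
which is positive for $\vert s\vert\le b<b_0$ by the Minkowski-norm criterion of Chern--Shen. So $g$ is strictly increasing on $[-b,b]$ and has at most one zero $c$. Therefore $\beta(v)/\alpha(v)=c$ for every $v\in\nu(\vt)$. (Existence of such $c$ is not needed: if $\nu(\vt)$ were empty the statement would be vacuous, though it is nonempty whenever $\vt\ne V$. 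If $\vt=V$ there is nothing to prove.) The only delicate step is recognizing $g^{\prime}=\Phi(\cdot,b)$; everything else is direct computation.
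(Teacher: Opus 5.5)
Your argument is correct, and it takes a genuinely different route from the paper.

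The paper argues by symmetry. The group $\mathrm K=\{Q\in\mathbb O(\alpha):Q\vert_{\vt}=\mathrm{Id}\}$ lies in $\mathrm{Iso}(F)$ and preserves both $\beta/\alpha$ and $\nu^1(\vt)$. It acts transitively on $\nu^1(\vt)$, because its orbits there are closed and of full dimension $\dim\hz-1$ while $\nu^1(\vt)$ is connected; the case $\dim\hz=1$ is handled by a reflection.

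You instead test the orthogonality condition from Lemma~\ref{AlphaBetaLegendre} against the single vector $\bb\in\vt$. This collapses everything to the scalar equation $g(s)=s\phi(s)+(b^2-s^2)\phi^{\prime}(s)=0$, and the identity $g^{\prime}=\Phi(\cdot,b)$ finishes the job with no dimension count, connectedness argument, or case split. The only convexity input is $\Phi(s,b)>0$ for $\vert s\vert\le b=\alpha(\bb)$. This is indeed forced by positivity of $\g^F_v$ on $\mathrm{span}(v,\bb)$ when $\dim V\ge 2$, so you do not depend on the "for all $b<b_0$" form in which the Chern--Shen criterion is quoted.

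What each route buys is different. The paper's route never invokes that explicit inequality, only the $\mathbb O_\beta(\alpha)$-invariance of $F$, and it yields the extra structural fact that $\nu^1(\vt)$ is a single $\mathrm K$-orbit. Your route yields quantitative information instead. Since $g(-b)<0<g(b)$ for $b>0$, the constant is the unique zero $c(b^2)$ of $g$ in $(-b,b)$. So it exists, it is explicit ($c=-b^2$ for Randers metrics), and it does not depend on $\vt$ at all.

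In particular, the function $h$ of \eqref{BetaOverAlphaFunct} and \eqref{equation-definition-h-sff} is just $h(p)=c\big(\alpha(\bb(p))^2\big)$. This is smooth by the implicit function theorem, because $\partial_s g=\Phi>0$ and $g$ depends smoothly on $b^2$. That would replace the local vector field arguments for smoothness of $h$ in Propositions~\ref{AlphaBetaSubm} and~\ref{AlphaBetaSFFVert}, including the subtlety at singular points.
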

\begin{proof}
Once $\frac{\beta(v)}{\alpha(v)}$ is positively homogeneous of degree zero, it suffices to show that the quotient is constant on $\nu^1 (\vt).$

Consider
\begin{equation*}
\mathrm{K} = \{ Q \in \mathbb{O}(\alpha) :  Q \vert_{V} = Id\vert_{\vt} \} .
\end{equation*}
In particular, $Q(\bb) = \bb$, because $\bb \in \vt$, and $Q(\hz) = \hz$ for all $Q \in \mathrm{K}$, where $\hz$ is the subspace $\alpha$-orthogonal to $\vt$. Moreover, by Corollary~\ref{AlphaBetaIsometry}, $\mathrm{K} \subset Iso (F)$.

For each $v \in V$ and $Q \in \mathrm{K}$,
\begin{equation*}
\beta(Q(v)) = \ga (Q(v),\bb) = \ga (Q(v),Q(\bb)) = \ga (v,\bb) = \beta(v) ,
\end{equation*}
where $\ga$ is the Riemannian metric associated with $\alpha$. So $\frac{\beta(Q(v))}{\alpha(Q(v))} = \frac{\beta(v)}{\alpha(v)}$, and it is enough to prove that $\nu^1(\vt) = \mathrm{K}(v)$ for the, hereafter, fixed vector $v\in \nu^{1}(\vt)$.

If $Q \in \mathrm{K}$ and $w \in \vt$, then
\begin{equation*}
\g^{F}_{Q(v)}(Q(v),w) = \g^{F}_{Q(v)}(Q(v),Q(w)) = \g^{F}_v (v,w) = 0 ,
\end{equation*}
because $Q\vert_{\vt} = Id\vert_{\vt}$, and $Q \in Iso(F)$. Thus, the orbit $\mathrm{K}(v)$ is a submanifold of $\nu^1(\vt)$. To conclude the prove, observe that $\mathrm{K}$ can be identified with the group of linear isometries for $(\hz, \alpha\vert_{\hz})$. In particular, 
$\mathrm{K}(v)$ is closed. But, since $\dim \mathrm{K}(v) = \dim \hz -1 = \dim \nu^1(\vt)$, the orbit $\mathrm{K}(v)$ must be open in $\nu^1(\vt)$. This completes the desired equality for $\dim (\hz) \geq 2$, because $\nu^1(\vt)$ is connected. At last, when $\hz$ is $1$-dimensional, $\mathrm{K}$ contains only the identity and a reflection $R$ across $\vt$. In this case, $\nu(\vt)$ is also $1$-dimensional, so $v$ and $R(v)$ span its connected components. Hence, $\nu^1 (\vt)$ consists precisely of $v$ and $R(v)$.
\end{proof}

\subsection{Legendre transformation}

The \textbf{Legendre transform} (or \textbf{Legendre map}) of a  Finsler metric $F$ is the map $\ell^F: TM \to T^*M$ which associates to each $v \in TM$ the linear functional $\ell^F_v \in T_{\tau(v)}^*M$ defined as
\begin{equation*}
\ell^F_v(w) := \left. \frac{d}{dt} \left[ \frac{1}{2} F^2(v + tw) \right] \right\vert_{t=0}.
\end{equation*}
By Lemma~\ref{FundTensor}, $\ell^F_v(w)=\g^F_v(v, w)$ for all $v \in TM \setminus 0$. So $\ell^F$ restricts to a diffeomorphism from $TM \setminus 0$ onto $T^*M \setminus 0$.

\begin{lemma}
\label{AlphaBetaLegendre} 
If $F = \alpha \phi \big( \tfrac{\beta}{\alpha} \big)$ is an $(\alpha,\beta)$-metric, then its Legendre map is given by
\begin{equation}
\label{AlphaBetaLegendreEq}
\begin{aligned}
\ell^F_v(w) &= F(v) \ga\!\left( \lambda \!\left( \tfrac{\beta(v)}{\alpha(v)} \right) \frac{v}{\alpha(v)} + \phi^{\prime} \!\left( \tfrac{\beta(v)}{\alpha(v)} \right) \bb , w \right) \\
&= \ga \left( \rho \left( \tfrac{\beta(v)}{\alpha(v)} \right) v + F(v) \phi^{\prime} \!\left( \tfrac{\beta(v)}{\alpha(v)} \right) \bb , w \right)
\end{aligned}
\end{equation}
where $\ga$ is the Riemannian metric associated with $\alpha$, $\bb$ is the vector field $\alpha$-dual to $\beta$, $\rho(s) = \phi(s) \lambda(s)$, and $\lambda$ as in \eqref{Lambda}.
\end{lemma}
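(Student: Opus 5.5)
We compute the Legendre map directly from its definition, $\ell^F_v(w) = \tfrac{d}{dt}\big[\tfrac12 F^2(v+tw)\big]\big\vert_{t=0} = F(v)\,\tfrac{d}{dt}F(v+tw)\big\vert_{t=0}$, valid for $v \neq 0$. Writing $s(t) = \beta(v+tw)/\alpha(v+tw)$, the chain rule gives
\begin{equation*}
\frac{d}{dt}F(v+tw)\Big\vert_{t=0} = \dot\alpha\,\phi(s) + \alpha(v)\,\phi^{\prime}(s)\,\dot s ,
\end{equation*}
where $s = \beta(v)/\alpha(v)$ and dots denote $t$-derivatives at $t=0$.

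Two elementary derivatives are needed. First, since $\alpha^2 = \ga(\cdot,\cdot)$, we have $\dot\alpha = \ga(v,w)/\alpha(v)$. Second, $\beta$ is linear with $\beta(w) = \ga(\bb,w)$, so the quotient rule gives
\begin{equation*}
\dot s = \frac{\ga(\bb,w)}{\alpha(v)} - \frac{\beta(v)\,\ga(v,w)}{\alpha(v)^3}.
\end{equation*}
Substituting, we get
\begin{equation*}
\frac{d}{dt}F = \phi(s)\frac{\ga(v,w)}{\alpha(v)} + \phi^{\prime}(s)\Big(\ga(\bb,w) - s\,\frac{\ga(v,w)}{\alpha(v)}\Big) = \ga\Big(\big(\phi(s)-s\phi^{\prime}(s)\big)\frac{v}{\alpha(v)} + \phi^{\prime}(s)\bb ,\, w\Big).
\end{equation*}
By \eqref{Lambda}, $\phi(s)-s\phi^{\prime}(s) = \lambda(s)$. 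Multiplying by $F(v)$ gives the first line of \eqref{AlphaBetaLegendreEq}.

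For the second line, we use $F(v)/\alpha(v) = \phi(s)$. This gives $F(v)\lambda(s)\,v/\alpha(v) = \phi(s)\lambda(s)\,v = \rho(s)\,v$, and the $\bb$-term is left unchanged.

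At the zero vector both sides vanish: $F^2$ is $C^1$ with zero derivative there, and the right-hand side carries a factor of $F(v)$ or is interpreted as $0$. So the formula holds on all of $TM$, with the usual convention at $v=0$. There is no real obstacle here. The only point needing care is the quotient-rule term, specifically that $\beta(v)/\alpha(v)^2$ combines correctly with the factor $\alpha(v)$ to produce exactly $-s\phi^{\prime}(s)\,v/\alpha(v)$.
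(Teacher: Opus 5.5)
Your proof is correct and is essentially the paper's second (``alternative'') derivation: you differentiate $\tfrac12\alpha^2\phi^2(\beta/\alpha)$ along $v+tw$, collect the $\ga(v,w)$ terms into $\lambda(s)\,v/\alpha(v)$, and use $F(v)/\alpha(v)=\phi(s)$ to get the $\rho$ form. The paper's primary argument instead sets $u=v$ in the known formula for the fundamental tensor of an $(\alpha,\beta)$-metric, which yields the same identity. Your extra remark about $v=0$ is harmless.
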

\begin{proof}
The expression for the fundamental tensor of an $(\alpha,\beta)$-metric is well-known; e.g. \cite[Equation~1.5]{Shibata1984invariant}, \cite[Equation~2.2]{Matsumoto1992theory}, or \cite[p.~5]{Chern2005riemann}. In the present notation, fixed $v \in TM \setminus 0$,
\begin{equation*}
\begin{aligned}
\g^F_v (u,w) = & \rho \!\left( \tfrac{\beta (v)}{\alpha (v)} \right) \ga (u,w) + \rho_0 \!\left( \tfrac{\beta (v)}{\alpha (v)} \right) \beta (u) \beta (w) \\
& + \rho_1 \!\left( \tfrac{\beta (v)}{\alpha (v)} \right) \left[ \beta (u) \frac{\ga (v,w)}{\alpha (v)} + \beta (w) \frac{\ga (v,u)}{\alpha (v)} \right] \\
& - \tfrac{\beta (v)}{\alpha (v)} \rho_1 \!\left( \tfrac{\beta (v)}{\alpha (v)} \right) \frac{\ga (v,u)}{\alpha (v)} \frac{\ga (v,w)}{\alpha (v)} ,
\end{aligned}
\end{equation*}
for all $u,w \in T_{\tau(v)} M$, where $\rho (s) = \phi(s) (\phi(s) - s \phi^{\prime}(s))$, $\rho_0 (s) = \phi(s) \phi^{\prime\prime}(s) + (\phi^{\prime}(s))^2$, and $\rho_1(s) = \phi(s) \phi^{\prime}(s) - s \rho_0(s)$. By Lemma~\ref{FundTensor}, making $u=v$ above gives
\begin{equation*}
\begin{aligned}
\ell^F_v (w) &= \rho \!\left( \tfrac{\beta (v)}{\alpha (v)} \right) \ga (v,w) + \rho_0 \!\left( \tfrac{\beta (v)}{\alpha (v)} \right) \beta (v) \beta (w) + \rho_1 \!\left( \tfrac{\beta (v)}{\alpha (v)} \right) \beta (w) \alpha (v) \\
&= \rho \!\left( \tfrac{\beta (v)}{\alpha (v)} \right) \ga (v,w) + \phi\!\left( \tfrac{\beta (v)}{\alpha (v)} \right) \phi^{\prime}\!\left( \tfrac{\beta (v)}{\alpha (v)} \right) \beta (w) \alpha (v) .
\end{aligned}
\end{equation*}
Taking $\beta(\cdot) = \ga (\bb,\cdot)$ implies
\begin{equation*}
\ell^F_v (w) = \rho \!\left( \tfrac{\beta (v)}{\alpha (v)} \right) \ga (v,w) + F(v) \phi^{\prime}\!\left( \tfrac{\beta (v)}{\alpha (v)} \right) \ga (\bb,w) ,
\end{equation*}
which yields the equality \eqref{AlphaBetaLegendreEq}.

Alternatively, \eqref{AlphaBetaLegendreEq} can be established by straightforward computation as follows. If $v \in TM \setminus 0$, and $w\in T_{\tau(v)}M$, then
\begin{equation*}
\begin{aligned}
\ell^F_v (w) &= \left. \frac{d}{dt} \left[ \tfrac{1}{2} \alpha^2 (v+tw) \phi^2  \!\left( \tfrac{\beta (v+tw)}{\alpha (v+tw)} \right) \right]\right\vert_{t=0} \\
&= \ga \left( v,w \right) \phi^2 \!\left( \tfrac{\beta(v)}{\alpha(v)} \right) + \alpha^2 (v) \phi \!\left( \tfrac{\beta(v)}{\alpha(v)} \right) \phi^{\prime} \!\left( \tfrac{\beta(v)}{\alpha(v)} \right) \left[ \frac{\beta(w)}{\alpha(v)} - \frac{\beta(v)}{\alpha^2(v)}\frac{\ga \left( v,w \right)}{\alpha(v)} \right] \\
&= \phi \!\left( \tfrac{\beta(v)}{\alpha(v)} \right) \left\lbrace \phi \!\left( \tfrac{\beta(v)}{\alpha(v)} \right) \ga \left( v,w \right) + \phi^{\prime} \!\left( \tfrac{\beta(v)}{\alpha(v)} \right) \left[ \alpha(v) \beta(w) - \frac{\beta(v)}{\alpha(v)} \ga \left( v,w \right) \right] \right\rbrace \\ 
&= \phi \!\left( \tfrac{\beta(v)}{\alpha(v)} \right) \ga \left( \left\lbrace \phi \!\left( \tfrac{\beta(v)}{\alpha(v)} \right) - \phi^{\prime} \!\left( \tfrac{\beta(v)}{\alpha(v)} \right) \frac{\beta(v)}{\alpha(v)} \right\rbrace v + \phi^{\prime} \!\left( \tfrac{\beta(v)}{\alpha(v)} \right) \alpha(v) \bb , w \right) \\
&= F(v) \ga \left( \left\lbrace \phi \!\left( \tfrac{\beta(v)}{\alpha(v)} \right) - \tfrac{\beta(v)}{\alpha(v)} \phi^{\prime} \!\left( \tfrac{\beta(v)}{\alpha(v)} \right) \right\rbrace \frac{v}{\alpha(v)} + \phi^{\prime} \!\left( \tfrac{\beta(v)}{\alpha(v)} \right) \bb , w \right) .
\end{aligned}
\end{equation*}
\end{proof}

Lastly, consider Lemma~\ref{FundTensor} while making $w=v$ in equation~\eqref{AlphaBetaLegendreEq} to conclude an $(\alpha,\beta)$-metric can be expressed as
\begin{equation}
\label{AlphaBetaRep}
F(v) = \ga\!\left( \lambda \!\left( \tfrac{\beta(v)}{\alpha(v)} \right) \frac{v}{\alpha(v)} + \phi^{\prime} \!\left( \tfrac{\beta(v)}{\alpha(v)} \right) \bb , v \right) ,
\end{equation}
for $\ga$, $\lambda$, and $\bb$ defined as in Lemma~\ref{AlphaBetaLegendre}.
\section{Finsler submersions}

A submersion $\pi: (M_1,F_1) \to (M_2,F_2)$ between two Finsler manifolds is called a \textbf{Finsler submersion} if, at every point $p \in M_1$, $d\pi_p (B^{F_1}_1 (0)) = B^{F_2}_1 (0)$, i.e. $d\pi_p$ sends the unit ball of the Minkowski space $(T_p M_1, (F_1)_p)$ to the unit ball of the Minkowski space $(T_{\pi(p)} M_2, (F_2)_{\pi(p)})$.

\begin{lemma}\cite[Proposition~2.1]{Alvarez2001isometric}
If $\pi: (M_1,F_1) \to (M_2,F_2)$ is a Finsler submersion, then, for each $p \in M_1$ and $w \in T_{\pi(p)} M_2$,
\begin{equation*}
F_2 (w) = \inf \{ F_1 (v) : v \in T_p M_1 \, , \; d\pi_p (v) = w\} .
\end{equation*}
\end{lemma}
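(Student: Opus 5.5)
We prove the two inequalities separately, working pointwise at $p$ with $T_pM_1$ and $T_{\pi(p)}M_2$ treated as Minkowski spaces. The hypothesis $d\pi_p(B^{F_1}_1(0)) = B^{F_2}_1(0)$ concerns closed unit balls. By positive $1$-homogeneity of $F_1$, $F_2$ and linearity of $d\pi_p$, it upgrades to $d\pi_p(B^{F_1}_r(0)) = B^{F_2}_r(0)$ for every $r>0$. The case $w=0$ is trivial: $v=0$ gives infimum $0 = F_2(0)$. So fix $w \neq 0$ and set $I(w) := \inf\{F_1(v) : d\pi_p(v) = w\}$. The preimage set is an affine subspace, nonempty because $d\pi_p$ is surjective, so $I(w)$ is finite.

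\textbf{Inequality $I(w) \geq F_2(w)$.} Let $v$ satisfy $d\pi_p(v) = w$ and put $r := F_1(v)$. Then $r>0$, since $v \neq 0$ and $F_1$ is positive away from zero. We have $v \in B^{F_1}_r(0)$, so $w = d\pi_p(v) \in d\pi_p(B^{F_1}_r(0)) = B^{F_2}_r(0)$. This means $F_2(w) \leq r = F_1(v)$. Taking the infimum over all such $v$ gives the claim.

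\textbf{Inequality $I(w) \leq F_2(w)$.} Put $r := F_2(w) > 0$. Then $w \in B^{F_2}_r(0) = d\pi_p(B^{F_1}_r(0))$, so there is $v$ with $F_1(v) \leq r$ and $d\pi_p(v) = w$. Hence $I(w) \leq F_2(w)$, and in fact the infimum is attained.

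The only step needing care is the rescaling $d\pi_p(B^{F_1}_r(0)) = B^{F_2}_r(0)$, and in particular that we use closed balls. If the paper intends open balls, the same argument gives $F_2(w) < r$ for every $r > F_1(v)$ in the first inequality, hence still $F_2(w) \leq F_1(v)$. In the second inequality we get, for each $r > F_2(w)$, a preimage of $w$ with $F_1$-value less than $r$; letting $r \downarrow F_2(w)$ gives $I(w) \leq F_2(w)$. Neither inequality uses any property of Finsler metrics beyond homogeneity, continuity and positivity, so no convexity argument is needed.
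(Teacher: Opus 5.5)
Your proof is correct. The paper does not prove this lemma itself; it quotes it from \cite[Proposition~2.1]{Alvarez2001isometric}. Your argument is the standard direct proof of that fact: positive homogeneity rescales the unit-ball condition to $d\pi_p(B^{F_1}_r(0)) = B^{F_2}_r(0)$ for all $r>0$, which gives $F_2(w)\le F_1(v)$ for every preimage $v$ of $w$ and also produces a preimage with $F_1(v)\le F_2(w)$. You handle both the closed-ball and open-ball readings correctly. Your observation that with closed balls the infimum is attained is what makes the paper's subsequent notion of horizontal vectors, those with $F_2(d\pi_p(v)) = F_1(v)$, nonvacuous.
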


In particular, $F_2 (d\pi_p (v)) \leq F_1 (v)$ for all $v \in T_p M_1$. The vectors for which equality holds are said to be \textbf{horizontal}. Furthermore, the set of horizontal vectors at $p \in M_1$ coincides with the orthogonal cone $\nu_p (\pi^{-1} (s))$, where $s := \pi(p)$. To simplify notation, we write $\nu_p (\pi^{-1} ( \pi(p) ))$ as $\nu_p (\pi)$ and call it the \textbf{horizontal cone} at $p$. Similarly, the set $\nu(\pi^{-1} (s))$ of all orthogonal vectors to $\pi^{-1} (s)$ is the \textbf{horizontal cone} to said fiber, and denoted $\nu(\pi)_s$. As in the Riemannian case, the Finsler submersion admits a lifting property for geodesics.

\begin{lemma}\cite[Theorem~3.1]{Alvarez2001isometric}
\label{GeodesicLift}
Let $\pi: (M_1,F_1) \to (M_2,F_2)$ be a Finsler submersion. For each unit speed geodesic $\gamma: (-\varepsilon , \varepsilon) \to M_2$ and each point $p \in \pi^{-1}(\gamma(0))$, there is one and only one unit speed geodesic $\tilde{\gamma}: (-\varepsilon , \varepsilon) \to M_1$ such that $\pi \circ \tilde{\gamma} = \gamma$ and  $\tilde \gamma^{\prime} (0) \in \nu_p (\pi)$.
 \end{lemma}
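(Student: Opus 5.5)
This is the horizontal-lift lemma for Finsler submersions (cited from Alvarez--Duran). I would prove it by a Finslerian version of the Riemannian argument: build the lift as a horizontal curve, then show it is a geodesic because it realizes the distance between fibers.

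\emph{Step 1: a smooth horizontal lift field.} Legendre duality gives a canonical horizontal lift of vectors. For a fixed $p$, the map
\[
d\pi_p\vert_{\nu_p(\pi)}: \nu_p(\pi)\to T_{\pi(p)}M_2\setminus 0
\]
is a bijection and preserves norms. To see this, note that minimizing $F_1$ on the affine fiber $d\pi_p^{-1}(w)$ is a strictly convex problem, so it has a unique minimizer. By Lagrange multipliers, that minimizer is characterized by $\ell^{F_1}_v$ vanishing on $\ker d\pi_p$, which means $\g^{F_1}_v(v,\cdot)=0$ on the tangent space of the fiber, i.e.\ $v\in\nu_p(\pi)$. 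The previous lemma (the infimum formula) then gives $F_1(v)=F_2(w)$. Smoothness of the lift in $(p,w)$ comes from the implicit function theorem applied to $\ell^{F_1}_v\vert_{\ker d\pi}=0$, $d\pi(v)=w$, using positive-definiteness of $\g^{F_1}_v$ restricted to $\ker d\pi_p$.

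\emph{Step 2: define $\tilde\gamma$.} Let $\tilde\gamma$ be the integral curve through $p$ of the time-dependent field $X_t(x)=\mathrm{hor}_x(\gamma'(t))$ on $\pi^{-1}(\gamma(t))$. Concretely, solve $\tilde\gamma'(t)=\mathrm{hor}_{\tilde\gamma(t)}(\gamma'(t))$ subject to the constraint $\pi(\tilde\gamma(t))=\gamma(t)$. This constraint is preserved because $d\pi(\tilde\gamma')=\gamma'$. The curve exists on all of $(-\varepsilon,\varepsilon)$: after shrinking to compact subintervals this follows from local triviality of submersions, and otherwise by shrinking $\varepsilon$ as the statement allows.

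\emph{Step 3: $\tilde\gamma$ is a geodesic.} Restrict $\gamma$ to a short interval $[t_0,t_1]$ on which it minimizes distance. Any curve $c$ in $M_1$ from $\pi^{-1}(\gamma(t_0))$ to $\pi^{-1}(\gamma(t_1))$ satisfies $L_{F_1}(c)\ge L_{F_2}(\pi\circ c)\ge d(\gamma(t_0),\gamma(t_1))$, while $L_{F_1}(\tilde\gamma)=L_{F_2}(\gamma)$ by norm preservation. So $\tilde\gamma$ minimizes the distance between the two fibers. It is therefore a geodesic, since a curve that minimizes length and has constant speed is a geodesic, and $F_1(\tilde\gamma')=1$. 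Covering $(-\varepsilon,\varepsilon)$ by such intervals shows $\tilde\gamma$ is a unit-speed geodesic everywhere, with $\tilde\gamma'(0)\in\nu_p(\pi)$.

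\emph{Step 4: uniqueness.} Let $\sigma$ be another unit geodesic with $\pi\circ\sigma=\gamma$ and $\sigma'(0)\in\nu_p(\pi)$. Then $d\pi(\sigma'(0))=\gamma'(0)=d\pi(\tilde\gamma'(0))$ and both vectors are horizontal, so the injectivity in Step 1 gives $\sigma'(0)=\tilde\gamma'(0)$. Uniqueness of geodesics with given initial data then gives $\sigma=\tilde\gamma$.

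The main obstacle is Step 1: establishing injectivity and smoothness of $d\pi_p$ on the horizontal cone, which is not a linear space. The other steps are standard once that is in place.
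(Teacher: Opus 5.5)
The paper gives no proof of this lemma; it only cites \'Alvarez Paiva--Dur\'an. Your argument is the standard proof of this fact and, for the local statement, it is correct. You take the horizontal lift of $w$ to be the unique $F_1$-minimizer in $d\pi_p^{-1}(w)$, characterized by $\ell^{F_1}_v\vert_{\ker d\pi_p}=0$. You then use the comparison $L_{F_1}(c)\ge L_{F_2}(\pi\circ c)\ge d(\gamma(t_0),\gamma(t_1))=L_{F_1}(\tilde\gamma\vert_{[t_0,t_1]})$. Steps 1, 3 and 4 are fine.

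One claim in Step 2 is false: local triviality of submersions does \emph{not} give existence of the lift on compact subintervals, because a submersion need not be a fibration. Take $\pi(x,y)=x$ from $\mathbb R^2\setminus\{(1,0)\}$ to $\mathbb R$, both with Euclidean metrics. This is a Finsler submersion, and $\gamma(t)=t$ on $(-2,2)$ is a unit speed geodesic. The only candidate lift through $p=(0,0)$ is $t\mapsto(t,0)$, which leaves $M_1$ at $t=1$, already inside $[-3/2,3/2]$. So the lemma, read literally with the same $\varepsilon$ for $\gamma$ and $\tilde\gamma$, is false in general.

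What your argument proves is the local version, with $\varepsilon$ shrunk. That is all the paper uses, namely that the fibers of a Finsler submersion form a Finsler foliation. If you want the full interval, you need a completeness hypothesis on $F_1$, and then the fix is short. On the maximal interval $(a,b)$ of the lift, $\tilde\gamma$ is a unit speed geodesic by your Step 3. If $b<\varepsilon$, completeness gives a limit $x=\lim_{t\to b}\tilde\gamma(t)$ with $\pi(x)=\gamma(b)$. The integral curve of $(\partial_t,\mathrm{hor}_x(\gamma'(t)))$ on the manifold $\{(t,x):\pi(x)=\gamma(t)\}$ would then extend past $b$, a contradiction. Posing the ODE on this fiber product is also the clean way to make the ``constraint'' formulation in your Step 2 rigorous.
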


It can be verified that a submersion $\pi: (M_1,F_1) \to (M_2,F_2)$ is a Finsler submersion if and only if, for each $p \in M_1$,
\begin{equation}
\label{FinslerSubmHorizVec}
\g^{F_1}_v (v,w) = \g^{F_2}_{d\pi_p (v)} (d\pi_p (v) , d\pi_p (w)) ,
\end{equation}
for all $v \in \nu_p (\pi)$, and all $w \in T_p M_1$.

\begin{proposition}\cite[Proposition~2.2]{Alvarez2001isometric}
\label{MinkowskiRiemannSubm}
Let $\pi : (V_1,F_1) \to (V_2,F_2)$ be a linear submersion of Minkowski spaces. If $v$ is a horizontal vector, i.e. $v \in \nu_v (\pi)$, then $\pi: (V_1,\g^{F_1}_v) \to (V_2,\g^{F_2}_{\pi(v)})$ is an isometric submersion of Euclidean spaces. 
\end{proposition}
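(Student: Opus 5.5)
We need to prove Proposition~\ref{MinkowskiRiemannSubm}: for a linear submersion $\pi:(V_1,F_1)\to(V_2,F_2)$ of Minkowski spaces and a horizontal vector $v$, the map $\pi:(V_1,\g^{F_1}_v)\to(V_2,\g^{F_2}_{\pi(v)})$ is an isometric submersion. Throughout, "linear submersion of Minkowski spaces" means $\pi(B^{F_1}_1(0))=B^{F_2}_1(0)$, so that $F_2(\pi(u))=\inf\{F_1(x):\pi(x)=\pi(u)\}$. Write $\vt=\ker\pi$ and $\bar v=\pi(v)$. Since $v$ is horizontal, $F_1(v)=F_2(\bar v)$ and $\g^{F_1}_v(v,k)=0$ for every $k\in\vt$.

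We must show two things: that $\pi$ maps the $\g^{F_1}_v$-orthogonal complement $\hz$ of $\vt$ isometrically onto $(V_2,\g^{F_2}_{\bar v})$, and that $\pi$ is onto. Surjectivity is given, so only the isometry needs work. The tool is the second-order behaviour of the inequality
\[
f(x):=\tfrac12 F_1^2(x)-\tfrac12 F_2^2(\pi(x))\ \ge 0\quad\text{for all } x\in V_1,
\]
which holds with equality at $x=v$. Because $F_1$ and $F_2\circ\pi$ are smooth near $v$ (note $\bar v\neq0$ since $F_2(\bar v)=F_1(v)>0$), the point $v$ is an interior minimum of $f$.

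\emph{First order.} Since $v$ minimizes $f$, we have $df_v=0$, which says
\[
\g^{F_1}_v(v,w)=\g^{F_2}_{\bar v}(\bar v,\pi w)\quad\text{for all } w\in V_1.
\]
This is \eqref{FinslerSubmHorizVec}. Taking $w=v$ shows $v$ is $\g^{F_1}_v$-orthogonal to $\vt$, so $v\in\hz$.

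\emph{Second order.} At the minimum the Hessian is positive semidefinite:
\[
Q(w):=\g^{F_1}_v(w,w)-\g^{F_2}_{\bar v}(\pi w,\pi w)\ \ge 0\quad\text{for all } w\in V_1.
\]
So $\pi$ does not increase length from $\g^{F_1}_v$ to $\g^{F_2}_{\bar v}$. It remains to upgrade this to equality on $\hz$.

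\emph{Upgrading to equality.} I will use that $\pi|_{\hz}:\hz\to V_2$ is a linear bijection, since $\hz$ is a complement of $\ker\pi$. The plan is to show that the restricted quadratic form $Q|_{\hz}$ vanishes. Consider the map $\sigma:V_2\setminus0\to V_1$ sending $y$ to the unique $F_1$-minimizer of $F_1$ on the affine fibre $\pi^{-1}(y)$. Existence and uniqueness come from strict convexity, and smoothness follows from the implicit function theorem applied to the condition $\g^{F_1}_x(x,k)=0$ for all $k\in\vt$, which is nondegenerate by strong convexity. By construction $F_1(\sigma(y))=F_2(y)$ and $\pi\circ\sigma=\mathrm{id}$. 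Differentiating twice at $\bar v$ and combining with the first-order identity and $\g^{F_1}_v(v,\vt)=0$, one finds
\[
d\sigma_{\bar v}(V_2)\subset\hz,
\]
which is the derivative of the orthogonality condition in the direction of the fibre. Then for $u=d\sigma_{\bar v}(y)$ we get $Q(u)=0$, and since $d\sigma_{\bar v}$ is the inverse of $\pi|_{\hz}$, this gives $Q|_{\hz}=0$. A semidefinite quadratic form vanishing on a subspace forces $\g^{F_1}_v(u,u)=\g^{F_2}_{\bar v}(\pi u,\pi u)$ for all $u\in\hz$, and polarization turns this into an isometry.

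The main obstacle is this last step, namely verifying $d\sigma_{\bar v}(V_2)\subset\hz$ and that the second derivative of $F_1^2\circ\sigma$ at $\bar v$ equals the Hessian of $F_1^2$ along $d\sigma_{\bar v}$. The cross terms involve $d^2\sigma$ paired against $\g^{F_1}_v(v,\cdot)$. These vanish because $d^2\sigma$ takes values in $\vt$ (as $\pi\circ\sigma=\mathrm{id}$ implies $\pi\circ d^2\sigma=0$), and $\g^{F_1}_v(v,\vt)=0$.
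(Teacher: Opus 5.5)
The paper gives no proof of its own here (the statement is imported from \cite{Alvarez2001isometric}), so there is no internal argument to match. Judged on its own, your argument is correct. Your section $\sigma$ is exactly $\psi^{-1}$ from Lemma~\ref{FlattingHorizCone}, so you can simply quote its smoothness.

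The proof actually rests on two identities:
\begin{itemize}
\item[(a)] Differentiate the fibre-orthogonality condition $\g^{F_1}_{\sigma(y)}(\sigma(y),k)=0$, $k\in\vt$, \emph{once} at $\bar v$. The derivative of $x\mapsto \g^{F_1}_x(x,k)=d(\tfrac12F_1^2)_x(k)$ is $\g^{F_1}_x(\cdot,k)$, so this gives $\g^{F_1}_v(d\sigma_{\bar v}y,k)=0$. Hence $d\sigma_{\bar v}$ maps into $\hz$, and since $\pi\circ d\sigma_{\bar v}=\mathrm{id}$, it equals $(\pi\vert_{\hz})^{-1}$.
\item[(b)] Differentiate $F_2^2=F_1^2\circ\sigma$ twice at $\bar v$. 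The term $\g^{F_1}_v(v,d^2\sigma_{\bar v}(y,y))$ vanishes because $d^2\sigma_{\bar v}(y,y)\in\vt$, which leaves $\g^{F_2}_{\bar v}(y,y)=\g^{F_1}_v(d\sigma_{\bar v}y,d\sigma_{\bar v}y)$.
\end{itemize}
Together, (a) and (b) already give the statement, so your first- and second-order analysis of $f$ is never used. Alternatively, $Q\ge0$ together with (b) suffices without (a). The $\g^{F_1}_v$-orthogonal projection $u^h$ of $u=d\sigma_{\bar v}y$ onto $\hz$ still has $\pi u^h=y$ and $\g^{F_1}_v(u^h,u^h)\le \g^{F_1}_v(u,u)=\g^{F_2}_{\bar v}(y,y)$, while $Q(u^h)\ge 0$ gives the reverse inequality. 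So you need one of the two, not both.

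There are two small slips:
\begin{itemize}
\item $d\sigma_{\bar v}(V_2)\subset\hz$ comes from the single differentiation in (a), not from ``differentiating twice''.
\item Taking $w=v$ in your first-order identity only yields $F_1(v)=F_2(\bar v)$. The orthogonality $\g^{F_1}_v(v,\vt)=0$ comes from $w\in\vt$, and it is in any case the definition of horizontality.
\end{itemize}

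For comparison, there is a purely linear-algebraic route by duality. The condition $\pi(B^{F_1}_1(0))=B^{F_2}_1(0)$ says exactly that $\pi^*:(V_2^*,F_2^*)\to(V_1^*,F_1^*)$ is a linear isometric embedding. Equation \eqref{FinslerSubmHorizVec} says $\ell^{F_1}_v=\pi^*\ell^{F_2}_{\bar v}$ for horizontal $v$. Since $\tfrac12(F_2^*)^2=\tfrac12(F_1^*)^2\circ\pi^*$, the fundamental tensor of $F_2^*$ at $\ell^{F_2}_{\bar v}$ is the $\pi^*$-pullback of that of $F_1^*$ at $\ell^{F_1}_v$. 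These dual tensors are the inverses of $\g^{F_2}_{\bar v}$ and $\g^{F_1}_v$, and the adjoint of an isometric embedding of Euclidean spaces is a Riemannian submersion.

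That route avoids the implicit function theorem and any construction of a section, but it needs Legendre duality for fundamental tensors. Yours stays on the primal side and uses only calculus on $V_1$, at the price of the smooth section $\sigma$, which the paper already supplies in Lemma~\ref{FlattingHorizCone}.
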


The next result allows us to ``flatten'' the horizontal cones of linear submersions between Minkowski spaces.

\begin{lemma}\cite[Lemma~2.9]{Alexandrino2019singular}
\label{FlattingHorizCone}
Let $\pi: V_1 \to V_2$ be a linear submersion between vector spaces. If $F_1$ is a Minkowski norm on $V_1$, and $\nu (\pi)_0$ is the set of $F_1$-orthogonal vectors to $\pi^{-1}(0)$, then:
\begin{enumerate}[label=\textup{(\roman*)}]
\item the map $\psi := \pi\vert_{\nu(\pi)_0}:\nu(\pi)_0 \to V_{2}\setminus 0$ is a positively $1$-homogeneous diffeomorphism that can be continuously extended to zero; and

\item the function $F_{2}(v) := F_{1}(\psi^{-1}(v))$ is the unique Minkowski norm on $V_2$ that makes $\pi : (V_1,F_1) \to (V_2,F_2)$ a linear Finsler submersion.
\end{enumerate}
\end{lemma}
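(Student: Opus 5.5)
The plan is to prove item (i) first and then derive item (ii) from it. For (i), set $\vt := \pi^{-1}(0)$, which is the kernel of $\pi$.

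\textbf{Smoothness.} The orthogonal cone $\nu(\pi)_0$ is the set of nonzero $v$ with $\ell^{F_1}_v(w)=0$ for all $w\in\vt$. Equivalently,
\begin{equation*}
\nu(\pi)_0 = (\ell^{F_1})^{-1}\big(\mathrm{Ann}(\vt)\setminus 0\big).
\end{equation*}
Since $\ell^{F_1}$ is a diffeomorphism from $V_1\setminus 0$ onto $V_1^{\ast}\setminus 0$, the cone $\nu(\pi)_0$ is a smooth submanifold of dimension $\dim V_2$, diffeomorphic to $\mathrm{Ann}(\vt)\setminus 0$. Moreover $\mathrm{Ann}(\vt)=\pi^{\ast}(V_2^{\ast})$, so $\nu(\pi)_0$ is diffeomorphic to $V_2^{\ast}\setminus 0$. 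The cone is invariant under positive scaling by Lemma~\ref{FundTensor}(i), and $\psi$ is positively $1$-homogeneous because $\pi$ is linear.

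\textbf{Bijectivity of $\psi$.} For $u\in V_2\setminus 0$, the fiber $\pi^{-1}(u)$ is an affine subspace parallel to $\vt$ that misses $0$. Since $F_1^2$ is strictly convex and proper, it has a unique minimizer $v$ on this fiber. The first-order condition at $v$ reads $\ell^{F_1}_v(w)=0$ for all $w\in\vt$, so $v\in\nu(\pi)_0$. Conversely, if $v\in\nu(\pi)_0$ with $\pi(v)=u$, then $v$ is a critical point of the strictly convex function $F_1^2$ on the fiber. Such a critical point is the unique minimizer. Hence $\psi$ is bijective.

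\textbf{Local diffeomorphism.} I would check that $d\psi_v$ is injective, i.e. $T_v\nu(\pi)_0\cap\vt=0$. A tangent vector $\dot v$ to the cone satisfies
\begin{equation*}
\g^{F_1}_v(\dot v,w)+\tfrac12\,\partial C_v(v,\dot v,w)=0 \quad\text{for all } w\in\vt,
\end{equation*}
where $C$ is the Cartan tensor term. Setting $w=\dot v$ and using $C_v(v,\cdot,\cdot)=0$ gives $\g^{F_1}_v(\dot v,\dot v)=0$, hence $\dot v=0$. A cleaner route is to differentiate $\ell^{F_1}_v(w)=0$ along the cone: the derivative of $v\mapsto\ell^{F_1}_v$ is $\g^{F_1}_v$, so tangent vectors $\dot v$ satisfy $\g^{F_1}_v(\dot v,w)=0$ for all $w\in\vt$. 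Taking $\dot v\in\vt$ then forces $\dot v=0$. So $\psi$ is a bijective immersion between manifolds of equal dimension, hence a diffeomorphism.

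\textbf{Continuous extension.} Homogeneity, together with compactness of $\nu^1(\pi)_0$ and $\psi\neq 0$ there, shows that $\psi^{-1}(u)\to 0$ as $u\to 0$. So both $\psi$ and $\psi^{-1}$ extend continuously by $0\mapsto 0$.

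For (ii), define $F_2(u):=F_1(\psi^{-1}(u))$. By the minimization above,
\begin{equation*}
F_2(u)=\min\{F_1(v):\pi(v)=u\}.
\end{equation*}
Equivalently, the unit ball of $F_2$ is $\pi$ applied to the unit ball of $F_1$. This gives continuity, positive $1$-homogeneity, and smoothness away from $0$ via the diffeomorphism $\psi$. It also shows that $\pi$ maps unit ball to unit ball, which is exactly the Finsler submersion condition.

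Uniqueness follows from \cite[Proposition~2.1]{Alvarez2001isometric}: any $F_2$ making $\pi$ a Finsler submersion must equal this infimum.

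\textbf{Main obstacle: strong convexity of $F_2$.} I would handle this with Legendre duality. The dual norm $F_1^{\ast}$ restricted to the subspace $\pi^{\ast}(V_2^{\ast})$ is again a Minkowski norm, since the restriction of a strongly convex smooth norm to a subspace remains one. The dual of the infimal projection, $F_2^{\ast}$, equals $F_1^{\ast}\circ\pi^{\ast}$. Dualizing back, $F_2$ is a Minkowski norm, because the Legendre transform preserves positive-definiteness of Hessians.

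Alternatively, for horizontal $v$ one can verify the identity \eqref{FinslerSubmHorizVec}. This identifies $\g^{F_2}_{\pi(v)}$ with the quotient of $\g^{F_1}_v$ by $\vt$, which is positive definite.
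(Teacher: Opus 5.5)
The paper does not prove this lemma; it quotes it from \cite{Alexandrino2019singular}. So there is no in-paper argument to compare against, and your proposal has to stand on its own. It does, and each step checks out:
\begin{itemize}
\item Writing $\nu(\pi)_0=(\ell^{F_1})^{-1}(\mathrm{Ann}(\vt)\setminus 0)$ gives smoothness and the correct dimension.
\item Minimizing $F_1^2$ on the fibers gives bijectivity of $\psi$.
\item The tangent-space description $T_v\nu(\pi)_0=\{\dot v:\g^{F_1}_v(\dot v,w)=0 \text{ for all } w\in\vt\}$ gives injectivity of $d\psi_v$.
\item Strong convexity follows from $F_2^{\ast}=F_1^{\ast}\circ\pi^{\ast}$ (the dual of a quotient norm is the restriction of the dual norm to the annihilator). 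Combine this with $F_2=F_2^{\ast\ast}$ for the gauge of the convex body $\pi(B_1)$ and the fact that duals of Minkowski norms are Minkowski.
\end{itemize}
One thing you should state explicitly is that $\psi$ lands in $V_2\setminus 0$. This holds because $v\in\vt\cap\nu(\pi)_0$ would force $F_1(v)^2=\g^{F_1}_v(v,v)=0$.

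Two repairs are needed.

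\emph{The first Cartan-tensor display.} Drop it: as written it is not meaningful. The correction term is a multiple of the Cartan tensor with $v$ in one slot, and that vanishes identically by homogeneity. The ``cleaner route'' you give immediately afterward is the real argument.

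\emph{The alternative route to strong convexity.} It is incomplete as stated. Identity \eqref{FinslerSubmHorizVec} is first order: it says $\ell^{F_2}_{\pi(v)}\circ\pi=\ell^{F_1}_v$, and by itself it determines nothing about $\g^{F_2}_{\pi(v)}$. Moreover, in the paper it is stated for Finsler submersions, as is Proposition~\ref{MinkowskiRiemannSubm}, so both presuppose that $F_2$ is already Minkowski; citing them here would be circular. To make this route work, fix a lift $\tilde a$ of $a$ and differentiate $u\mapsto \ell^{F_1}_{\psi^{-1}(u)}(\tilde a)$ in direction $b$. Since $d(\psi^{-1})_u(b)\in T_v\nu(\pi)_0$ projects to $b$, it is the $\g^{F_1}_v$-orthogonal lift $b^h$. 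This yields
\begin{equation*}
\g^{F_2}_u(a,b)=\g^{F_1}_v(a^h,b^h).
\end{equation*}

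The two routes trade off as follows. The completed second-order computation costs more, but it proves Proposition~\ref{MinkowskiRiemannSubm} at the same time. The duality route is shorter, but it outsources strong convexity to Legendre duality of Minkowski norms.
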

\section{\texorpdfstring{$(\alpha,\beta)$}{alpha,beta}-Finsler submersions}

\begin{proposition}
\label{AlphaBetaSubm}
Let $\pi: (M_1,F_1) \to (M_2,F_2)$ be a Finsler submersion. Suppose $F_1$ is an $(\alpha,\beta)$-metric; namely, $F_1 = \alpha_1 \phi \big( \tfrac{\beta_1}{\alpha_1} \big)$. Denote the Riemannian metric associated to $\alpha_1$ as $\ga_1$, and the $\alpha_1$-dual vector field to $\beta_1$ simply as $\bb$.
\begin{enumerate}[label=\textup{(\roman*)}]
\item\label{s1} If $\bb \neq 0$, then $(T_{\pi(p)} M_2 , (F_2)_{\pi(p)})$ is isometric to an $(\alpha,\beta)$-Minkowski space for each $p \in M_1$.

\item\label{s2} If $\bb$ is $\alpha_1$-orthogonal to the fibers, and $\phi$ has nowhere zero derivative, then
\begin{itemize}
\item $\bb$ is foliated and horizontal;

\item $F_2$ is an $(\alpha,\beta)$-metric (for the function $\phi$), i.e. $F_2 = \alpha_2 \phi \big( \tfrac{\beta_2}{\alpha_2} \big)$ for a Riemannian norm $\alpha_2$ and a $1$-form $\beta_2$ on $M_2$; and

\item $\pi: (M_1,\ga_1) \to (M_2,\ga_2)$ is a Riemannian submersion. Here, as before, $\ga_2$ denotes the Riemannian metric associated to $\alpha_2$.
\end{itemize}

\item\label{s3} If $\bb$ is tangent to the fibers of $\pi$, then there exist a function $\kappa: M_1 \to \mathbb{R}$, and a Riemannian metric $\ga_2$ on $M_2$ such that $\pi: (M_1,\kappa\ga_1) \to (M_2,\ga_2)$ is a Riemannian submersion. In addition, $F_2$ is precisely $\alpha_2$.
\end{enumerate}
\end{proposition}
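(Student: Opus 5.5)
The whole argument is pointwise linear algebra. By Lemma~\ref{FlattingHorizCone}, $(F_2)_{\pi(p)}$ is the push-forward of $(F_1)_p$ through $d\pi_p$, so each $(T_{\pi(p)}M_2,(F_2)_{\pi(p)})$ is determined by $(T_pM_1,(F_1)_p)$ and $\vt:=\ker d\pi_p$. Smoothness on $M_2$ is then obtained separately from the submersion structure.

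For item~\ref{s1} I will use the symmetry characterization of Proposition~\ref{AlphaBetaSymmetry}. Consider the group $\mathrm K$ of $\ga_1$-isometries of $T_pM_1$ that fix $\bb$, preserve $\vt$, and act as the identity on $\vt$ when $\bb\in\vt$. Each element lies in $\mathbb O_{\beta}(\alpha)\subset\mathrm{Iso}(F_1)$, so it preserves $\nu(\pi)_0$ and the unit ball. It therefore descends to a linear map of $V_2=T_pM_1/\vt$ preserving $F_2$. Choose the Euclidean norm on $V_2$ induced by $\ga_1$ on $\hz=\vt^{\perp}$, and let $\beta_2$ be the functional dual to the image of $\bb$. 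The plan is to show that the descended group contains $\mathbb O_{\beta_2}(\alpha_2)$, and then apply Proposition~\ref{AlphaBetaSymmetry}.
\begin{itemize}
\item If $\bb\in\vt$, then $\beta_2=0$, and the isometries of $\hz$ extended by the identity on $\vt$ give the full orthogonal group of $V_2$.
\item In general, decompose $\bb=\bb^{v}+\bb^{h}$. Isometries of $\vt$ fixing $\bb^{v}$, combined with isometries of $\hz$ fixing $\bb^{h}$, give $\mathbb O_{\beta_2}(\alpha_2)$ on the quotient.
\end{itemize}
A continuous $1$-homogeneous $F_2$ invariant under this group is an $(\alpha,\beta)$-norm, although possibly for a different function $\tilde\phi$. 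This is exactly what item~\ref{s1} claims, namely isometry to some $(\alpha,\beta)$-Minkowski space.

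For item~\ref{s2}, assume $\bb\perp_{\ga_1}\vt$. Lemma~\ref{AlphaBetaLegendre} shows that $v\in\nu_p(\pi)$ if and only if $\rho\,v+F\phi'\bb\perp_{\ga_1}\vt$. Since $\bb\in\hz$, this reduces to $v\in\hz$, so the horizontal cone is exactly the linear space $\hz$. In particular $\bb$ is horizontal. Then $F_2(d\pi v)=F_1(v)$ for $v\in\hz$, which means $F_2=\alpha_2\phi(\beta_2/\alpha_2)$ with the same $\phi$, where $\alpha_2$ is the norm transported from $\hz$ and $\beta_2$ is dual to $d\pi\bb$.

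The main obstacle is to show that $\alpha_2$ and $\beta_2$ are well defined independently of the point in the fiber. Take $p,p'$ in the same fiber. Both pairs $(\alpha_2,\beta_2)$ give the same $F_2$ at $\pi(p)=\pi(p')$ with the same $\phi$. Since $\phi'\neq0$, Proposition~\ref{AlphaBeta!Phi} forces them to coincide. Hence $\alpha_2$ and $\beta_2$ are smooth tensors on $M_2$, and $\pi$ is a $\ga_1\to\ga_2$ Riemannian submersion. Moreover $d\pi(\bb)$ is the well-defined field $\bb_2$ on $M_2$, so $\bb$ is the horizontal lift of $\bb_2$ and is therefore foliated.

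For item~\ref{s3}, assume $\bb\in\vt$. Lemma~\ref{BetaOverAlphaCte} gives $\beta/\alpha\equiv c(p)$ on $\nu_p(\pi)$. By the Legendre formula, a horizontal $v$ satisfies $\ga_1(\rho(c)v+F\phi'(c)\bb,w)=0$ for all $w\in\vt$. Thus $v$ has $\vt$-component proportional to $\bb$, fixed by $c$, and its $\hz$-component $v^h$ is arbitrary. On the cone, $F_1(v)=\phi(c)\alpha_1(v)$ and $\alpha_1(v)^2=\alpha_1(v^h)^2+t(c)^2\alpha_1(v)^2$, so $F_1(v)^2=\kappa(p)\,\alpha_1(v^h)^2$ for a smooth $\kappa(p)$ depending only on $c(p)$ and $\phi$. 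Therefore $F_2^2(d\pi v)=\kappa\,\ga_1(v^h,v^h)$, which shows that $F_2$ is Riemannian, $F_2=\alpha_2$, and $\pi:(M_1,\kappa\ga_1)\to(M_2,\ga_2)$ is a Riemannian submersion.

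It remains to check that $c$ is smooth, which follows from an implicit-function argument on the Legendre map, and that $\kappa>0$. The well-definedness of $\ga_2$ along fibers is automatic here, because $F_2$ is already well defined.
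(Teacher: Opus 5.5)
Your proposal is correct and follows essentially the paper's route. For (i) you use Proposition~\ref{AlphaBetaSymmetry} applied to the isometries of $\hz_p$ fixing $\bb^h$; your descent-to-the-quotient argument is a cleaner packaging of the paper's conical-lift function $\vartheta_p$. For (ii) you use the Legendre characterization $\nu_p(\pi)=\hz_p$ together with Proposition~\ref{AlphaBeta!Phi}, and for (iii) you use Lemma~\ref{BetaOverAlphaCte} with $F_1^2=\kappa\,\alpha_1(v^h)^2$, exactly as the paper does.

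You left one step implicit: that $t^2$ depends only on $c$. It follows from $c\,\lambda(c)=-\phi'(c)\,\alpha_1(\bb)^2$, which you get by pairing the vertical component of the Legendre condition with $\bb$. This gives $t^2=-c\phi'(c)/\lambda(c)$ and hence $\kappa=\rho(c)$, which is the paper's $\kappa=\rho\circ h$.
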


\begin{remark}
\label{LocalMinkowski}
Item \ref{s1} of Proposition~\ref{AlphaBetaSubm} is requiring the Finsler metric $F_2$ to be an $(\alpha,\beta)$-Minkowski norm on each tangent space. It is important to observe that this is not equivalent to $F_2$ being an $(\alpha,\beta)$-metric on $M_2$, as the real function ``$\phi$'' that would generate the Finsler metric pointwise may vary with the points $p \in M_1$ over the same fiber of $\pi$.
\end{remark}

\begin{proof}
Let $\f$ denote the partition of $M_1$ into the fibers of $\pi$, i.e. $\f = \{ \pi^{-1}(s) \}_{s \in M_2}$. Each connected component of a fiber will be called a ``\emph{leaf}'' of $\f$. As per usual, $\hz$ denotes the $\alpha_1$-orthogonal distribution to the leaves of $\f$, and $\vt$ the vertical space, i.e. the tangent space to the leaves of $\f$.

To prove item~\ref{s1}, our goal is to find a candidate that can represent the pullback of $F_2$ on the subspace $\hz_p$ for each $p \in M_1$; vide \eqref{F2candidate}. These candidates will be defined on a tubular neighborhood $\tub_{\delta}(L_q)$ of the leaf $L_q$ through $q \in M_1$. The radius $\delta > 0$ should be small enough so that there is an $\mathcal{F}$-foliated tangent frame $\{ \e_i \}$ for $\hz$ over the tubular neighborhood.

For each $p \in \tub_{\delta}(L_q)$ and $v \in T_p M_1$, let $v^h$ denote the $\alpha_1$-orthogonal projection of $v$ onto $\hz$. Let $\varphi_p: \hz_p \to \nu_p (\pi)$ be the ``\emph{conical lifting}", i.e. the inverse of the $\alpha_1$-orthogonal projection $\psi_p: v \in \nu_p (\pi) \mapsto v^h \in \hz_p$; recall Lemma~\ref{FlattingHorizCone}. Consider the vector field $\bb^h \in \mathfrak{X} (\hz)$, and the group $\mathbb{O}_{\bb^h (p)}$ of linear isometries on $(\hz_p, (\alpha_1)_p)$ which fix the vector $\bb^{h}(p)$. The linear action of $\mathbb{O}_{\bb^h (p)}$ extends to $T_p M_1$ acting as the identity on $\vt_p$. Regarding $\mathbb{O}_{\bb^h (p)}$ as a subgroup of isometries on $T_p M_1$, it fixes the horizontal cone $\nu_p (\pi)$; consequence of Lemma~\ref{AlphaBetaLegendre} keeping in mind that $\bb(p)$ is invariant under $\mathbb{O}_{\bb^h (p)}$. So
\begin{equation}
\label{ConicLiftInvariance}
\varphi_p (Q y) = Q \varphi_p (y) \, \text{ for all } \; Q \in \mathbb{O}_{\bb^h (p)} \, \text{ and } \, y \in \hz_p .
\end{equation}

Define the following positively zero-homogeneous map $\vartheta_p$ on $\hz_p$:
\begin{equation}
\label{FunctCandidate}
\vartheta_p (y) := \lambda \!\left( \tfrac{\beta_1 \circ \varphi_p (y)}{\alpha_1 \circ \varphi_p (y)} \right) \frac{\tilde{\alpha}_2 (y)}{\alpha_1 \circ \varphi_p (y)} + \phi^{\prime} \!\left( \tfrac{\beta_1 \circ \varphi_p (y)}{\alpha_1 \circ \varphi_p (y)} \right) \frac{\tilde{\beta}_2 (y)}{\tilde{\alpha}_2 (y)} ,
\end{equation}
where
\begin{equation*}
\tilde{\alpha}_2 := \alpha_1 \vert_{\hz} \, \text{ and } \, \tilde{\beta}_2 (\cdot) := \ga_1 (\bb^{h},\cdot) \vert_{\hz} .
\end{equation*}
Equations~\eqref{ConicLiftInvariance} and~\eqref{FunctCandidate} imply that
\begin{equation*}
\vartheta_p (Qy) = \vartheta_p (y) \, \text{ for all } \, Q \in \mathbb{O}_{\bb^{h}(p)} \, \text{ and } \, y \in \hz_p .
\end{equation*}
Then, by Proposition~\ref{AlphaBetaSymmetry}, there exists a positive smooth real function $\tilde\phi_p$, depending on $p \in \tub_{\delta} (L_q)$, such that
\begin{equation}
\label{F2candidate}
\tilde{\alpha}_2 (y) \vartheta_p (y) = \tilde{\alpha}_2 (y) \tilde{\phi}_p \! \left( \frac{\tilde{\beta}_2 (y)}{\tilde{\alpha}_2 (y)} \right) \, \text{ for all } \; y \in \hz_p .
\end{equation}
Let us briefly comment on its construction. Given a foliated $\alpha_1$-orthonormal local frame $\{ \e_i \}$ for the distribution $\hz$ with $\bb^{h}(p) = \alpha_1 (\bb^{h}(p)) \e_n (p)$, the function $\tilde\phi_p$ is defined as
\begin{equation*}
\tilde{\phi}_p (\alpha_1 (\bb^{h}(p)) s) = \vartheta_p (0, \cdots, 0, \sqrt{1-s^2}, s) .
\end{equation*}

Equation~\eqref{F2candidate} gives a representative for $(F_2)_{\pi(p)}$ in the subspace $\hz_p$. It remains to verify this is the right one, i.e.
\begin{equation*}
F_{2} (d\pi_{p} (y)) = \tilde{\alpha}_{2} (y) \tilde{\phi}_p \!\left( \frac{\tilde{\beta}_{2}(y)}{\tilde{\alpha}_{2}(y)} \right) \, \text{ for all } \; y \in \hz_p .
\end{equation*}

From Lemma~\ref{AlphaBetaLegendre},
\begin{equation}
\label{HorizConeCharacterization}
v \in \nu_p (\pi) \, \text{ if and only if } \, \lambda \left( \tfrac{\beta_1 (v)}{\alpha_1 (v)} \right) \frac{v}{\alpha_1 (v)} + \phi^{\prime} \!\left( \tfrac{\beta_1 (v)}{\alpha_1 (v)} \right) \bb(p) \in \hz_p  
\end{equation}
Now, we combine several equations discussed earlier. If $v \in \nu_p (\pi)$, then
\begin{equation*}
\begin{aligned}
F_1(v) &\overset{\eqref{AlphaBetaRep}}{=} \ga_1 \!\left( \lambda \!\left( \tfrac{\beta_1 (v)}{\alpha_1 (v)} \right) \frac{v}{\alpha_1 (v)} + \phi^{\prime} \!\left( \tfrac{\beta_1 (v)}{\alpha_1 (v)} \right) \bb(p) , v \right) \\
&\overset{\eqref{HorizConeCharacterization}}{=} \ga_1 \!\left( \lambda \!\left( \tfrac{\beta_1 (v)}{\alpha_1 (v)} \right) \frac{v}{\alpha_1 (v)} + \phi^{\prime} \!\left( \tfrac{\beta_1 (v)}{\alpha_1 (v)} \right) \bb(p) , v^h \right) \\
&= \lambda \!\left( \tfrac{\beta_1 (v)}{\alpha_1 (v)} \right) \frac{\tilde\alpha_2^2 (v^h)}{\alpha_1 (v)} + \phi^{\prime} \!\left( \tfrac{\beta_1 (v)}{\alpha_1 (v)} \right) \tilde\beta_2(v^h) \\
&= \tilde\alpha_2 (v^h) \left\lbrace \lambda \!\left( \tfrac{\beta_1\circ\varphi_p (v^h)}{\alpha_1\circ\varphi_p (v^h)} \right) \frac{\tilde\alpha_2 (v^h)}{\alpha_1\circ\varphi_p (v^h)} + \phi^{\prime} \!\left( \tfrac{\beta_1\circ\varphi_p (v^h)}{\alpha_1\circ\varphi_p (v^h)} \right) \frac{\tilde\beta_2(v^h)}{\tilde\alpha_2 (v^h)} \right\rbrace \\
&\overset{\eqref{FunctCandidate}}{=} \tilde\alpha_2 (v^h) \vartheta_p (v^h) \\
&\overset{\eqref{F2candidate}}{=} \tilde\alpha_2 (v^h) \tilde{\phi}_p \!\left( \frac{\tilde{\beta}_{2} (v^h)}{\tilde{\alpha}_{2} (v^h)} \right) .
\end{aligned}
\end{equation*}
Summarizing,
\begin{equation}
\label{F1HorizCharact}
F_1 (v) = \tilde{\alpha}_2 (v^h) \tilde{\phi}_p \!\left( \frac{\tilde{\beta}_2 (v^h)}{\tilde{\alpha}_2(v^h)} \right) \, \text{ for all } v \in \nu_p (\pi) .
\end{equation}
Once $\pi: (M_1,F_1) \to (M_2,F_2)$ is a Finsler submersion, for each $v \in \nu_p (\pi)$ we have
\begin{equation}
\label{F2Charact}
F_2 (d\pi_p (v^h)) = F_2 (d\pi_p (v)) = F_1 (v) \overset{\eqref{F1HorizCharact}}{=} \tilde{\alpha}_{2}(v^h) \tilde{\phi}_p \!\left( \frac{\tilde{\beta}_{2}(v^{h})}{\tilde{\alpha}_{2}(v^{h})} \right) ,
\end{equation}
whence item~\ref{s1}. As pointed in Remark~\ref{LocalMinkowski}, we stress that $\tilde{\phi}_p$ generally depends on $p \in \tub_{\delta}(L_q) \subset M_1$. Even for $p \in L_q$, the function $\tilde{\phi}_q$ need not to be equal to $\tilde{\phi}_p$, so they do not induce a function $\tilde \phi$ on a neighborhood of $\pi(q)$.

For the proof of item~\ref{s2}, suppose $\bb$ is $\alpha_1$-orthogonal to the leaves of $\mathcal{F}$, i.e. $\bb \in \mathfrak{X}(\hz)$. Under this assumption,
\begin{equation*}
\bb^{h} = \bb \, \text{ and } \, \tilde{\beta}_2 = \beta_1\vert_{\hz} .
\end{equation*}
Replacing these two equalities into \eqref{FunctCandidate}, we infer that
\begin{equation}
\label{EqualityTilPhipPhi}
\tilde{\phi}_p = \phi .
\end{equation}
Moreover, $\nu_p (\pi) = \hz_p$ for all $p \in M_1$ by \eqref{HorizConeCharacterization}. In such case, for any $v \in \nu_p (\pi)$ and $w \in \nu_q (\pi)$ with $d\pi_p (v) = d\pi_q (w)$, we obtain
\begin{equation*}
\begin{aligned}
\tilde\alpha_2 (v) \phi \!\left( \frac{\tilde\beta_2 (v)}{\tilde\alpha_2 (v)} \right) &\overset{\eqref{F1HorizCharact}}{=} F_1(v) = F_2(d\pi_p (v)) \\
&= F_2(d\pi_q (w)) = F_1(w) \overset{\eqref{F1HorizCharact}}{=} \tilde\alpha_2 (w) \phi \!\left( \frac{\tilde\beta_2 (w)}{\tilde\alpha_2 (w)} \right) .
\end{aligned}
\end{equation*}
When $\phi^{\prime}$ is nowhere vanishing, Proposition~\ref{AlphaBeta!Phi} applies to ensure
\begin{equation}
\label{EqualityAlphaBeta}
\tilde{\alpha}_{2}(v) = \tilde{\alpha}_{2}(w) \, \text{ and } \, \tilde{\beta}_{2}(v) = \tilde{\beta}_{2}(w) .
\end{equation}
Putting together equations~\eqref{EqualityTilPhipPhi} and~\eqref{EqualityAlphaBeta}, we conclude item~\ref{s2}. 

Finally, for item~\ref{s3}, assume $\bb$ is tangent to the leaves of $\mathcal{F}$, i.e. $\bb \in \mathfrak{X}(\vt)$. As seen in Lemma~\ref{BetaOverAlphaCte}, the function $h: M_1 \to \mathbb{R}$, given by
\begin{equation}
\label{BetaOverAlphaFunct}
h(p) := \frac{\beta_1 (v)}{\alpha_1 (v)} \; \text{ for any } v \in \nu_p (\pi) ,
\end{equation}
is well-defined. For each $q \in M_1$, consider a smooth (local) vector field $X$ such that $X(p) \in \nu_p^{1}(\pi)$ on some neighborhood of $q$. Since $h(p) = \tfrac{\beta_1 (X(p))}{\alpha_1 (X(p))}$, the function $h$ is smooth around $q$. Hence, $h$ is smooth.

Reconsider \eqref{FunctCandidate} with the fact that $\tilde{\beta}_{2}$ vanishes on $\mathcal{H}$, by hypothesis. Then,
\begin{equation*}
\vartheta_p (y) = \lambda \circ h (p) \, \frac{\tilde\alpha_2 (y)}{\alpha_1 \circ \varphi_p (y)} \, \text{ for all } \, y \in \hz_p .
\end{equation*}
By the argument proving equation~\eqref{F1HorizCharact}, we have
\begin{equation}
\label{F1HorizBVert}
F_1 \circ \varphi_p (y) = \vartheta_p (y) \, \tilde\alpha_2 (y) = \lambda \circ h (p) \, \frac{\tilde\alpha_2^2 (y)}{\alpha_1 \circ \varphi_p (y)} \, \text{ for all } \, y \in \hz_p .
\end{equation}
Using $\rho = \phi \lambda$ and $F_1 = \alpha_1 \phi \big( \tfrac{\beta_1}{\alpha_1} \big)$, we obtain that
\begin{equation*}
F_2^2 (d\pi_p (y)) = F_2^2 (d\pi_p \circ \varphi_p (y)) = F_1^2 (\varphi_p (y)) \overset{\eqref{F1HorizBVert}}{=} \rho \circ h (p) \, \tilde\alpha_2^2 (y) \, \text{ for all } \, y \in \hz_p .
\end{equation*}
So item~\ref{s3} follows from setting $\kappa := \rho \circ h$.
\end{proof}
\section{Singular Finsler foliations}

\begin{definition}
Let $\f := \{L_p\}_{p \in M}$ be a partition of the manifold $M$ by connected immersed submanifolds without intersections, where $L_p$ is called the \textbf{leaf} of $\f$ through the point $p \in M$. Then $\f$ is a \textbf{singular foliation} if each point $q \in M$ admits a neighborhood $U_q$ such that for any vector $v \in T_q L_q$ there exists a vector field $X \in \mathfrak{X} (U_q)$ satisfying:
\begin{enumerate}[label=\textup{(\roman*)}]
\item $X(q) = v$;

\item $X(p) \in T_p L_p$ for all $p \in U_{q}$.
\end{enumerate}
The leaves of $\f$ are allowed to have different dimensions. Each union of leaves with the same dimension is called a \textbf{stratum}, and the union of leaves with maximal dimension is the \textbf{regular stratum}. When all the leaves have  the same dimension, $\f$ is called a \textbf{regular foliation} or just a foliation. 
\end{definition}

\begin{remark}
\label{TrivNbhd}
The above definition of singular foliation is equivalent to the existence around each point $q \in M$ of a submersion $\pi: U_q \to S_q$, from a possibly smaller neighborhood $U_q$ onto an embedded submanifold $S_q$ transverse to $L_q$ (called \textbf{slice}), such that each fiber $\pi^{-1}(s)$ is contained in $L_s$ and $P_q := \pi^{-1}(q)$ is an open, relatively compact, and connected subset of $L_q$ which contains the point $q$ (called \textbf{plaque}). When $\f$ is a (regular) foliation, such neighborhood $U_q$ is commonly called a \textbf{trivializing neighborhood}, and we shall adopt the name in general. Given a singular foliation $\f$ and fixed a trivializing neighborhood $U_q$, we set the convention of writing $\f^{\pi}$ for the (regular) foliation $\{\pi^{-1} (s)\}_{s \in S_q}$; in particular, $L_q^{\pi} = P_q$. Finally, $\f$ is said \textbf{locally closed} if $\f \cap S_q$ is always compact.
\end{remark}

A good example of singular foliation to keep in mind is the partition of a manifold $M$ into the orbits of a Lie group action. More generally, one can think of the partition of $M$ into the orbits of a geometric control. For the present, we are interested on singular foliations with an additional metric property.

\begin{definition}
A singular foliation $\f = \{L_p\}_{p \in M}$ on a (complete) Finsler manifold $(M,F)$ is called a \textbf{singular Finsler foliation} (or SFF, for short) if each geodesic $\gamma: \mathbb{R} \to M$ meeting the leaf $L_{\gamma(0)}$ orthogonally is \textbf{horizontal}, i.e. orthogonal to each leaf it intersects (whether they will meet or already have met).
 \end{definition}

\begin{remark}
When a SFF has all leaves of the same dimension (i.e. it is a regular foliation), we call it a \textbf{(regular) Finsler foliation}. Due to the lifting property for geodesics (recall Lemma~\ref{GeodesicLift}), the partition of $M_1$ into the connected components of the fibers of a Finsler submersion $\pi: (M_1,F_1) \to (M_2,F_2)$ is a (regular) Finsler foliation. Conversely, any (regular) Finsler foliation is locally described as a Finsler submersion.
\end{remark}

For some examples of SFFs, just consider the partition of a Finsler manifold into the orbits of an isometric Finsler action. In alternative, one can construct infinitely many non-homogeneous examples in Randers spaces; \cite[Example 2.13]{Alexandrino2019singular}. Briefly, start with a non-homogeneous singular Riemannian foliation (SRF) on a Riemannian manifold $(M,\alpha)$ (which exist by \cite{Radeschi2014clifford}) and choose $\bb$ ($\alpha$-dual to $\beta$) as a foliated and horizontal vector field. The initial foliation is a SFF with respect to the Randers metric $R = \alpha + \beta$. We can also generalize this sort of construction to $(\alpha,\beta)$-spaces; see Example~\ref{SphereSFF}.

Throughout this section, assume \emph{$\f$ is a locally closed singular Finsler foliation (SFF) on a complete Finsler space $(M,F)$}.

Consider a plaque $P_q$ as in Remark~\ref{TrivNbhd}. Let $f_q^{+}: U_q \to \mathbb{R}$ be the \textbf{future distance} with respect to $P_q$, i.e. $f_q^{+} (x) := d(P_q,x)$. Similarly, let $f_q^{-}: U_q \to \mathbb{R}$ be the \textbf{past distance} with respect to $P_q$, i.e. $f_q^{-} (x) := d(x,P_q)$. Reducing the neighborhood $U_q$ if necessary, we assume $f_q^{+}$ and $f_q^{-}$ are smooth on $U_q \setminus P_q$. Define the \textbf{future tubular neighborhood} of radius $\delta > 0$ around $P_q$ as
\begin{equation*}
\tub_{\delta}^{+} (P_q) := (f_q^{+})^{-1} [0,\delta) ,
\end{equation*}
and the \textbf{future cylinder} of radius $\delta > 0$ as
\begin{equation*}
C_{\delta}^{+} (P_q) := (f_q^{+})^{-1}(\delta) .
\end{equation*}
The definitions of \textbf{past tubular neighborhood and cylinder} are analogous.

An important property of SFFs is that each leaf of $\f \cap U_q$ is \textbf{locally forward and backward equidistant}, i.e. simultaneously contained in past and future cylinders. Precisely,
\begin{equation*}
\text{if } \, x \in U_q \, \text{ then } \, (L_x \cap U_q) \subset C_{\delta_1}^{+} (P_q) \cap C_{\delta_2}^{-}(P_q) .
\end{equation*}
It is worth noting the above condition provides an equivalent definition for a SFF; see \cite[Lemma~3.7]{Alexandrino2019singular}.

Given a future tubular neighborhood $\tub_{\delta}^{+} (P_q)$ of the plaque $P_q$, there is a natural projection $\rho^{+}:\tub_{\delta}^{+}(P_q)\to P_{q} $ defined as follows: $\rho^{+}(x)$ is the only point in $P_q$ such that the unit speed geodesic $\gamma^{+}$ from $\rho^{+}(x)$ to $x$ is orthogonal to $P_q$. The map $\rho^{+}$ is called \textbf{future metric projection}. Likewise we define the \textbf{past metric projection}.

\begin{lemma}\cite[Lemma~3.10]{Alexandrino2019singular}
For each $x\in \tub_{\delta}^{+} (P_q)$, the restriction of the future metric projection to $P_x := L_x\cap \tub_{\delta}^{+}(P_q)$ is a surjective submersion onto $P_q$. The same holds true for the past metric projection.
\end{lemma}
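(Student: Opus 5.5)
The plan is to construct a smooth right inverse of $\rho^{+}\vert_{P_x}$. If $\sigma: P_q \to P_x$ is smooth with $\rho^{+}\circ\sigma = \mathrm{Id}_{P_q}$, then $\rho^{+}\vert_{P_x}$ is surjective. Differentiating gives $d\rho^{+}\circ d\sigma = \mathrm{Id}$, so $d\rho^{+}$ is onto at every point of $\sigma(P_q)$. The natural candidate for $\sigma$ is the past metric projection onto the leaf piece $P_x$, with the roles of the two leaves swapped.

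Set $r := f_q^{+}(x) < \delta$. By local forward and backward equidistance of the leaves of $\f\cap U_q$, the whole of $P_x$ lies in the cylinder $C^{+}_{r}(P_q)$. So every $y\in P_x$ is joined to $\rho^{+}(y)$ by a unit speed geodesic of length $r$, orthogonal to $P_q$ at its start. The same equidistance, applied with $P_x$ (or a plaque of $L_x$ through $x$) in place of $P_q$, shows that $P_q$ lies in a past cylinder of $P_x$. That cylinder has radius $r$, since a minimizing segment realizing $d(P_q,P_x)$ is orthogonal to both. So for each $z\in P_q$ there is a unique point $\sigma(z)=\rho_x^{-}(z)\in P_x$ such that the unit speed geodesic $\gamma$ from $z$ to $\sigma(z)$ has length $r$ and arrives $F$-orthogonally to $P_x$. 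Here $\rho_x^{-}$ is the past metric projection onto $P_x$, and it is smooth because past distance functions are smooth off the plaque.

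The SFF property is then used as follows. The geodesic $\gamma$ meets $L_x$ orthogonally, so it is horizontal and, in particular, $F$-orthogonal to $P_q$ at its initial point $z$. Together with $L(\gamma)=r<\delta$, this makes $\gamma$ the future orthogonal geodesic from $P_q$ reaching $\sigma(z)$. By the uniqueness in the definition of $\rho^{+}$ on $\tub^{+}_{\delta}(P_q)$, we get $\rho^{+}(\sigma(z))=z$. This gives surjectivity and the submersion property on $\sigma(P_q)$.

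To get the submersion property at every point $y\in P_x$, not only on the image of $\sigma$, I would rerun the argument with a plaque of $L_x$ through $y$. This yields a local section of $\rho^{+}\vert_{P_x}$ through $y$ defined near $\rho^{+}(y)$, so $d\rho^{+}_y$ is onto. The past case is identical with the reversed metric $\tilde F(v)=F(-v)$, exchanging future and past.

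The main obstacle is the bookkeeping of neighborhoods. One must ensure that $P_q$ really lies inside a past tubular neighborhood of the relevant plaque of $L_x$ on which $\rho_x^{-}$ is defined and smooth. One must also ensure that the geodesics produced stay inside $U_q$, so that uniqueness of the future projection applies. I would handle this by first shrinking $U_q$ and $\delta$, using local closedness and the relative compactness of plaques from Remark~\ref{TrivNbhd}, so that the tubular neighborhoods of radius $\delta$ around all plaques meeting $\tub^{+}_{\delta}(P_q)$ are uniformly defined. If needed, I would then use connectedness of $P_q$ and an open–closed argument on the image of $\rho^{+}\vert_{P_x}$ to pass from local sections to global surjectivity.
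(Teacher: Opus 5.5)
Your proof has a genuine gap, and it sits in the case where the lemma has content: when $\dim L_q<\dim L_x$, for instance $q$ singular and $x$ regular. There the section $\sigma=\rho^-_x$ you want to use does not exist.

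Since $f^+_q\equiv r$ on $P_x$, you have $d(z,y)\ge r$ for all $z\in P_q$ and $y\in P_x$. On the other hand, every $y$ in the fiber $(\rho^+\vert_{P_x})^{-1}(z)$ is reached from $z$ by a geodesic of length $r$ orthogonal to $L_x$. So every point of that fiber realizes $d(z,P_x)=r$. These fibers are not singletons: $\rho^+\vert_{P_x}$ cannot be injective because $\dim P_x>\dim P_q$, and once the lemma is known they have dimension $\dim L_x-\dim L_q$. Equivalently, the points of $P_q$ are focal points of $P_x$ along these geodesics, so $d(\cdot,P_x)$ is not differentiable along $P_q$.

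A concrete example is Euclidean $\mathbb{R}^3$ foliated by the $x_3$-axis and the coaxial cylinders. A point of the axis is at distance $r$ from a whole horizontal circle of the cylinder of radius $r$, and near the axis $d(\cdot,P_x)$ behaves like $r-\sqrt{x_1^2+x_2^2}$.

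Hence "past distance functions are smooth off the plaque" holds only on a past tube of $P_x$ whose radius is below its focal distance. Here that focal distance is exactly $r=d(P_q,P_x)$, for every $x$. No shrinking of $U_q$ or $\delta$ can put $P_q$ inside the domain of a smooth, single-valued $\rho^-_x$, so this is not bookkeeping. For the same reason you cannot obtain $P_q\subset C^-_r(P_x)$ by applying local equidistance around a plaque of $L_x$, so your surjectivity argument also loses its footing in this case. At best, your argument covers $\dim L_x=\dim L_q$, where the foliation is locally a Finsler submersion and the statement is routine.

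What is missing is a direct reason why $d\rho^+_y$ is onto, one that avoids any nearest-point map onto $P_x$. One way to supply it:
\begin{enumerate}
\item By the definition of a singular foliation, choose smooth vector fields $X_1,\dots,X_k$ tangent to the leaves whose values span $TP_q$ near a given point of $\overline{P_q}$. Then $X_i(y)\in T_yP_x$.
\item Use the coordinates $(\xi,t)\in\nu^1(P_q)\times(0,\delta)$, $y=\exp(t\xi)$, on $\tub^+_\delta(P_q)\setminus P_q$, in which $\rho^+(y)=\tau(\xi)$. As $t\to 0$, the frame $\partial_u\exp(t\xi)$, $t^{-1}\partial_\theta\exp(t\xi)$, $\partial_t\exp(t\xi)$ converges to a basis of $T_zP_q\oplus T_\xi\nu_z(P_q)=T_zM$. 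This uses the $C^1$ regularity of $\exp$ along the zero section.
\item Consequently $d\rho^+_y(X_i(y))\to X_i(\rho^+(y))$, uniformly on compact sets. So these images span $T_{\rho^+(y)}P_q$ once $\delta$ is small.
\item Surjectivity then comes from the open--closed argument you mention. The image of $\rho^+\vert_{P_x}$ is open by the submersion property and closed by compactness of the unit cones and local closedness, and $P_q$ is connected.
\end{enumerate}
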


For a future tube $\tub_{\delta}^{+} (P_q)$ still fixed and $\lambda \in (0,1]$, we define the \textbf{future homothetic transformation} $h_{\lambda}^{+}: \tub_{\delta}^{+}(P_q) \to \tub_{\lambda \delta}^{+}(P_q)$ as  $h_{\lambda}^{+} \big(\gamma^{+} (r)\big) := \gamma^{+}(\lambda r)$, where $\gamma^{+}$ is a unit speed geodesic starting orthogonal to $P_q$ and $r>0$. A similar definition applies to the \textbf{past homothetic transformation} $h^{-}_{\lambda}$. These transformations are related to SFFs through the following result, known as the \emph{Homothetic Transformation Lemma}.

\begin{lemma}\cite[Lemma~3.14]{Alexandrino2019singular}
\label{HomotheticLemma}
The future (resp. past) homothetic transformation
$h^{+}_{\lambda} $ (resp. $h^{-}_{\lambda} $) sends plaques to plaques of a singular Finsler foliation.
\end{lemma}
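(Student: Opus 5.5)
The plan is to prove that for every $x\in\tub_{\delta}^{+}(P_q)$ the image $h^{+}_{\lambda}(P_x)$ of the plaque $P_x=L_x\cap\tub^{+}_{\delta}(P_q)$ is an open connected piece of the single leaf $L_{h^{+}_{\lambda}(x)}$. The past case is the same argument with the reversed metric $\overleftarrow{F}(v)=F(-v)$, which exchanges future and past objects. I would argue in three steps: (a) horizontal segments realize distances; (b) a triangle-equality rigidity, which shows that points of a leaf near $h^+_\lambda(y)$ are themselves homothetic images of points of $P_x$; (c) an open--closed argument along the connected plaque $P_x$.

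\emph{Step (a).} Fix $y\in P_x$ with $f^{+}_q(y)=r$. Let $\gamma_y$ be the unit speed geodesic from $\rho^{+}(y)$ to $y$, orthogonal to $P_q$. Since $\f$ is a SFF, $\gamma_y$ is horizontal, so it is orthogonal to $L_x$ at $y$. Set $z_y:=h^{+}_{\lambda}(y)=\gamma_y(\lambda r)$. Because $\gamma_y$ is orthogonal to $P_q$ at its start, its initial segment minimizes the distance from $P_q$, so $d(P_q,z_y)=\lambda r$. Because $\gamma_y$ arrives orthogonally at $P_x$ at $y$, applying the same fact to the reversed metric shows that the final segment realizes the past distance from $z_y$ to $P_x$, so $d(z_y,P_x)=(1-\lambda)r$. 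Here $\delta$ is shrunk enough that all these geodesics stay inside neighborhoods where the future and past distance functions are smooth and minimizers are unique.

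\emph{Step (b).} By local forward and backward equidistance of leaves, applied once with base plaque $P_q$ and once with base plaque $P_x$, every $w\in L_{z_y}$ near $z_y$ satisfies $d(P_q,w)=\lambda r$ and $d(w,P_x)=(1-\lambda)r$. On the other hand, $L_x$ lies in the future cylinder $C^{+}_{r}(P_q)$, so $d(P_q,P_x)=r$. The (nonsymmetric) triangle inequality is therefore an equality at $w$. The concatenation of a minimizer from $P_q$ to $w$ with a minimizer from $w$ to $P_x$ then has length $r=d(P_q,P_x)$, so it is a minimizing curve between two submanifolds. Hence it is a smooth geodesic, orthogonal to $P_q$ at its start, whose endpoint $y'$ lies in $P_x$, and $w=\gamma_{y'}(\lambda r)=h^{+}_{\lambda}(y')$. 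This gives the local inclusion $L_{z_y}\cap W\subset h^{+}_{\lambda}(P_x)$ for a small neighborhood $W$ of $z_y$.

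\emph{Step (c).} To get the reverse inclusion, consider the set $A:=\{y'\in P_x : h^{+}_{\lambda}(y')\in L_{z_y}\}$. I would show $A$ is open and closed in $P_x$, and conclude $A=P_x$ since $P_x$ is connected. For openness I would compare dimensions. By the lemma on metric projections, $\rho^+|_{P_x}$ is a submersion onto $P_q$, so $h^+_\lambda(P_x)$ is locally an immersed submanifold. Since $h^+_\lambda$ is a diffeomorphism off $P_q$, the correspondence $w\mapsto y'$ from step (b) is injective. It follows that $\dim L_{z_y}\le\dim L_x$, and symmetric reasoning, applying step (b) with the roles of $z_y$ and $x$ exchanged via the past tube of $P_x$, gives equality. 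Then invariance of domain shows that $h^{+}_{\lambda}$ maps a neighborhood of $y$ in $P_x$ onto a neighborhood of $z_y$ in $L_{z_y}$. Closedness would come from local closedness of the leaves together with continuity of $h^{+}_{\lambda}$.

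The main obstacle is the rigidity claim in step (b) in the nonreversible setting. One must check that the broken curve realizing $d(P_q,P_x)$ is indeed unbroken, and that it is the homothetic geodesic $\gamma_{y'}$ (orthogonal at $P_q$) and not some other minimizer. I would handle this by working inside a uniformly convex neighborhood where minimizers from a plaque are unique and orthogonal. I would also invoke the first variation formula: its boundary terms force orthogonality at both endpoints, and its interior term forces smoothness at $w$.
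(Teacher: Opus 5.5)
Your argument is correct in substance, but it takes a different route from the proof behind the citation. The paper itself gives no proof here; it cites \cite{Alexandrino2019singular}, where Molino's Riemannian argument is adapted to the Finsler setting. Both rest on the same key fact: local equidistance plus equality in the triangle inequality forces a broken minimizer from $P_q$ to be an unbroken horizontal geodesic, hence one of the radial geodesics $\gamma_{y'}$.

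What differs is the direction in which you run that fact.
\begin{itemize}
\item The standard proof fixes $y''\in P_x$ near $y$. It uses that $L_x$ lies locally in the future cylinder $C^{+}_{(1-\lambda)r}(P_{z_y})$ about the plaque of $z_y=h^{+}_{\lambda}(y)$, and takes the foot point $w\in P_{z_y}$ of $y''$. From $r=d(P_q,y'')\le d(P_q,w)+d(w,y'')=\lambda r+(1-\lambda)r$ it concludes $h^{+}_{\lambda}(y'')=w$. With a connectedness argument this gives $h^{+}_{\lambda}(P_x)\subset L_{h^{+}_{\lambda}(x)}$ directly, which is the form used later, e.g.\ in Lemma~\ref{EquifocalMain}.
\item Your step (b) is the mirror image: it uses equidistance of $L_{z_y}$ from $P_x$ instead of $L_x$ from $P_{z_y}$. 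What it buys is the complementary fact that the image fills a neighbourhood of $z_y$ in its leaf, which the forward argument alone does not give.
\end{itemize}

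Step (c), however, is a detour. The ``symmetric reasoning'' you invoke for $\dim L_x\le\dim L_{z_y}$ is, once written out, exactly the forward argument above. That argument already shows $h^{+}_{\lambda}(y'')\in L_{z_y}$ for all $y''$ near $y$, so openness of $A$ is immediate. The dimension count and the appeal to invariance of domain can be dropped. Note also that this reasoning must use the future cylinder about $P_{z_y}$ (equidistance of $L_x$ from the plaque of $z_y$), not ``the past tube of $P_x$'', which is the object step (b) already used.

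Two smaller repairs are needed.
\begin{itemize}
\item Closedness of $A$ does not need local closedness of the leaves, and continuity alone would not give it. Instead, note that the sets $\{y'\in P_x : h^{+}_{\lambda}(y')\in L\}$, $L\in\f$, are each open by the local argument and partition the connected set $P_x$; hence only one of them is nonempty.
\item ``Shrinking $\delta$'' does not justify smoothness of $d(\cdot,P_x)$ near $z_y$. The radius $r$ shrinks with $\delta$, while for singular $P_q$ the plaque $P_x$ has focal points on $P_q$ at distance $r$. In (a), the identity $d(z_y,P_x)=(1-\lambda)r$ follows anyway from $P_x\subset C^{+}_{r}(P_q)$ and the triangle inequality. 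The local backward equidistance you use in (b), however, has to be justified at the specific point $z_y$. The forward argument needs the analogous fact for $d(P_{z_y},\cdot)$ near $y$.
\end{itemize}
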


Next, we can ``flatten" the horizontal cones along a slice $S_q$, so that the Finsler metric in the leaf space of the foliation $\f\cap S_q$ coincides with the Finsler metric in the leaf space of the foliation $\f \cap U_q$. Furthermore, the foliation becomes diffeomorphic to a SFF on a Minkowski space.

\begin{theorem}\cite[Theorem~1.3]{Alexandrino2019singular}
\label{SliceThm}
Given a point $q \in M$, consider the trivializing neighborhood $U_q$ and the slice $S_q$ presented in Remark~\ref{TrivNbhd}. There exists a Finsler metric $\widehat{F}$ on $S_q$ such that:
\begin{enumerate}[label=\textup{(\roman*)}]
\item the foliation $\f_q := \f\cap S_q$ is a SFF with respect to $\widehat{F}$;

\item the distance between the leaves of $\f_q$ (with respect to $\widehat{F}$) and the distance between the leaves of $\f\cap U_q$ (with respect to $F$) coincide;

\item the slice foliation  $\f_q$ on $S_q$ is foliated diffeomorphic to a singular Finsler foliation on an open subset of the Minkowski space  $(T_q S_q, \widehat{F}_{q})$. 
\end{enumerate}
\end{theorem}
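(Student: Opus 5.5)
The statement to be proved is the Slice Theorem~\ref{SliceThm}, cited from \cite[Theorem~1.3]{Alexandrino2019singular}. I would prove it by flattening the horizontal cones of $\f$ along the slice, using Lemma~\ref{FlattingHorizCone} pointwise and then the Homothetic Transformation Lemma~\ref{HomotheticLemma} to linearize.

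\textbf{Step 1: define $\widehat F$.} First shrink $U_q$ so that the plaque $P_q$ and the slice $S_q$ are as in Remark~\ref{TrivNbhd}. At each $x\in S_q$ consider the linear submersion $d\pi_x : T_xM \to T_xS_q$, where $\pi:U_q\to S_q$ is the local submersion whose fibers are the plaques of $\f^\pi$. Lemma~\ref{FlattingHorizCone} then gives a unique Minkowski norm $\widehat F_x$ on $T_xS_q$ making $d\pi_x$ a linear Finsler submersion. It is obtained by transporting $F_x$ through the diffeomorphism $\psi_x=d\pi_x|_{\nu_x(\f^\pi)}$.

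\textbf{Step 2: smoothness of $\widehat F$.} Since $\nu(\f^\pi)$ is a smooth submanifold of $TM$ and $\tau$ restricted to it is a submersion, the maps $\psi_x$ glue into a fiberwise diffeomorphism $\nu(\f^\pi)|_{S_q}\to TS_q\setminus 0$. Hence $\widehat F$ is smooth off the zero section. Fiberwise strong convexity follows from Proposition~\ref{MinkowskiRiemannSubm}, since $\g^{\widehat F}_{d\pi(v)}$ is the quotient inner product of $\g^F_v$.

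\textbf{Step 3: items (i) and (ii).} Because $\pi$ is a Finsler submersion from $(U_q,F)$ onto $(S_q,\widehat F)$ by construction, Lemma~\ref{GeodesicLift} applies. Horizontal geodesics of $F$ project to $\widehat F$-geodesics, and every $\widehat F$-geodesic lifts horizontally. A geodesic of $\widehat F$ orthogonal to $L\cap S_q$ therefore lifts to an $F$-geodesic orthogonal to $L$. That lift stays horizontal because $\f$ is a SFF, and hence its projection is orthogonal to every leaf of $\f_q$ it meets. This gives (i).

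Item (ii) follows from the local equidistance of leaves: the future and past distances to $P_q$ are realized by horizontal geodesics, whose lengths are preserved by $\pi$. Conversely, any curve in $S_q$ lifts horizontally with the same length.

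\textbf{Step 4: item (iii).} Consider the exponential map $\exp^{\widehat F}_q$, which is $C^1$ at $0$ and smooth elsewhere. The homothetic transformations of Lemma~\ref{HomotheticLemma}, applied to the point leaf $\{q\}$ of $\f_q$, send leaves to leaves. Consequently the pulled-back partition $(\exp_q^{\widehat F})^{-1}(\f_q)$ on a neighborhood of $0\in T_qS_q$ is invariant under dilations $v\mapsto \lambda v$.

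The main obstacle is proving that this partition is a SFF for the constant metric $\widehat F_q$. To handle it, I would rescale $\widehat F$ by $\lambda^{-1}h_\lambda$ and take the limit $\lambda\to 0$. The metrics $\frac1{\lambda^2}(h_\lambda)^*\widehat F^2$ converge to the flat Minkowski norm $\widehat F_q$. The leaves stay invariant because of the homothety invariance, and the SFF property is closed under this $C^1$ convergence by the equidistance characterization \cite[Lemma~3.7]{Alexandrino2019singular}.

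The delicate points are twofold. One must check that $\exp^{\widehat F}_q$ is a diffeomorphism away from $0$ and only $C^1$ at $0$, so the conjugated foliation is smooth off the origin. One must also show that equidistance passes to the limit; this is where local closedness is used, to obtain uniform tubular neighborhoods.
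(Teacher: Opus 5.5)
Your Step~3 relies on the claim that $\pi\colon (U_q,F)\to (S_q,\widehat F)$ is a Finsler submersion ``by construction''. As soon as $L_q$ is singular, this is false in general. Step~1 only uses $d\pi_x$ at points $x\in S_q$, so all you know is that $d\pi_x$ maps the $F_x$-unit ball onto the $\widehat F_x$-unit ball for $x\in S_q$. At $p\in U_q\setminus S_q$, nothing relates $d\pi_p(B^{F_p}_1(0))$ to the $\widehat F_{\pi(p)}$-unit ball. Such a relation would force the fibers of $\pi$ to be locally equidistant, i.e.\ $\mathcal F^{\pi}$ would have to be a Finsler foliation. But these fibers are just some $(\dim L_q)$-dimensional submanifolds chosen inside the generally larger leaves $L_s$.

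Here is a concrete example. Take Euclidean $\mathbb C^2$ foliated by the orbits of $(e^{ia},e^{ib})\cdot(z,w)=(e^{ia}z,e^{ib}w)$, with $q=(1,0)$, $S_q=\{(t,w)\}$, and $\pi(re^{ia},w)=(r,e^{-ik(|w|^2)a}w)$ for a nonconstant $k$. Put $k_0=k(|w_0|^2)$ and let $R_a=(e^{ia},e^{ik_0a})$, which is an isometry. Then $\pi\circ R_a=T_a\circ\pi$, where $T_a(t,w)=(t,e^{i(k_0-k(|w|^2))a}w)$ fixes $(t,w_0)$. However, $T_a$ is a shear that is not $\widehat F$-isometric at $(t,w_0)$: it strictly lengthens the radial vector $(0,w_0)$ when $a\neq 0$. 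Hence $d\pi_{R_a(t,w_0)}$ does not send unit ball onto unit ball.

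Consequently, Lemma~\ref{GeodesicLift} is not available. Projections of $F$-horizontal geodesics need not be $\widehat F$-geodesics, and horizontal lifts of curves in $S_q$ need not preserve length. So items (i) and (ii) are not proved. Item (iii) inherits the gap, because Step~4 applies the Homothetic Transformation Lemma to $\mathcal F_q$ as an SFF of $(S_q,\widehat F)$, which is item (i).

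The missing idea is to use only the pointwise information that the flattening gives along $S_q$.
\begin{enumerate}
\item For $x\in S_q$, let $f_x$ be the future (resp.\ past) $F$-distance to the plaque $P_x$. By local equidistance, $f_x$ is constant on the leaves of $\mathcal F\cap U_q$.
\item At $y\in S_q$, $df_x(y)$ therefore vanishes on $\ker d\pi_y\subset T_yL_y$. Using $T_yM=T_yS_q\oplus\ker d\pi_y$, it factors as $\mu\circ d\pi_y$ with $\mu=d(f_x|_{S_q})(y)$.
\item Since $d\pi_y$ maps the $F_y$-unit ball onto the $\widehat F_y$-unit ball, $\widehat F^{\ast}(\pm\mu)=F^{\ast}(\pm df_x(y))=1$.
\item By the usual calibration argument, $f_x|_{S_q}$ is then, near $x$, the future (resp.\ past) $\widehat F$-distance to $P_x\cap S_q$.
\item This yields (ii) together with local forward and backward equidistance of $\mathcal F_q$, and hence (i) by \cite[Lemma~3.7]{Alexandrino2019singular}.
\end{enumerate}

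With (i) established, your Step~4 is a reasonable Molino-type rescaling argument. It is easier, though, to pass the orthogonality definition to the limit than to control uniform equidistance radii as you propose. For that, use that the rescaled geodesics converge to straight lines, and that every vector of $T_yL_y$ is a limit of vectors tangent to nearby leaves.
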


We refer to the above result as the \emph{Slice Theorem}, since it extends the classic slice theorem for isometric actions (see \cite{Alexandrino2015lie}) and its more general version in SRF theory. Succinctly, Theorem~\ref{SliceThm} provides relevant information about the local  model of SFFs. However, unlike the Riemannian case, the slice theorem cannot be used directly in the proof of equifocality, because the diffeomorphism in the statement is not the exponential map. It can, nonetheless, be used to demonstrate the stratification structure of SFFs.

\begin{proposition}\cite[Proposition~3.22]{Alexandrino2019singular}
Each stratum is a disjoint union of embedded submanifolds, and the collection of all the strata is a stratification of the manifold in the usual sense. Moreover, the foliation $\f\vert_{\Sigma}$ restricted to a stratum $\Sigma$ is a (regular) Finsler foliation.
\end{proposition}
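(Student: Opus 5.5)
The plan is to reduce everything to the local model supplied by the Slice Theorem~\ref{SliceThm} and to study, in that model, the set of leaves of minimal dimension. Fix $q \in M$, a trivializing neighborhood $U_q$ and a slice $S_q$ as in Remark~\ref{TrivNbhd}. Since $\f$ is locally closed, $L_q \cap S_q$ near $q$ is just $\{q\}$. For $x \in S_q$ one has $\dim L_x = \dim L_q + \dim (\text{leaf of } \f_q \text{ through } x)$, because near $q$ the plaques of $\f$ are locally products of plaques of $\f_q$ with the directions of $P_q$. Hence the stratum $\Sigma$ of $L_q$ meets $S_q$, near $q$, exactly in the set $Z$ of point-leaves of $\f_q$. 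By item (iii) of Theorem~\ref{SliceThm}, it suffices to understand the set $Z$ of $0$-dimensional leaves of a singular Finsler foliation on a neighborhood of the origin in a Minkowski space $(V,\widehat F_q)$, where the origin is itself a point-leaf.

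The key step is to show that $Z$ is (locally) a linear subspace of $V$. In a Minkowski space the geodesics are straight lines. Take $x \in Z$, and let $\gamma$ be the line through $0$ and $x$, parametrized by $\widehat F_q$-arclength with $\gamma(0)=0$ and $\gamma(a)=x$. Consider any point $y=\gamma(t)$, with $t\in(0,a)$, with $t<0$ (beyond $0$), or with $t>a$ (beyond $x$). By local forward/backward equidistance of leaves from a point-leaf (the characterization recalled before Lemma~\ref{HomotheticLemma}, cf.\ \cite[Lemma~3.7]{Alexandrino2019singular}), the leaf $L_y$ lies in the appropriate forward or backward $\widehat F_q$-sphere around $0$ and also in the one around $x$. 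The radii of these two spheres are such that the triangle inequality is an equality along $\gamma$. Strong convexity of $\widehat F_q$ then forces the equality case to be realized only on the line $\gamma$. So $L_y \subset \gamma$, and since leaves near $y$ are connected and orthogonal to the horizontal geodesic $\gamma$, $L_y=\{y\}$. Thus $Z$ contains, locally, the whole line through any two of its points, and, since $0\in Z$, it is a neighborhood of $0$ in a linear subspace. Pulling back by the foliated diffeomorphism of Theorem~\ref{SliceThm}(iii), and saturating by the plaques of $L_q$, shows that $\Sigma \cap U_q$ is an embedded submanifold. Distinct components arise only from global disjointness, which gives the disjoint-union statement.

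The stratification property then follows from the lower semicontinuity of $x\mapsto \dim L_x$: this is immediate from the definition of singular foliation, since the vector fields realizing $T_qL_q$ stay linearly independent nearby. It gives that the closure of a stratum consists of it together with strata of strictly smaller dimension, together with local finiteness coming from the finitely many possible dimensions. For the last claim, $\f\vert_\Sigma$ is regular because all its leaves have the same dimension. To see that it is a Finsler foliation for $F\vert_{T\Sigma}$, I would use again the equidistance characterization \cite[Lemma~3.7]{Alexandrino2019singular}. The argument above shows that a minimizing $F$-geodesic orthogonal to a plaque in $\Sigma$ and ending in $\Sigma$ stays inside $\Sigma$, being a line inside $Z$ in the model. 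Hence intrinsic distances in $\Sigma$ between nearby plaques agree with ambient ones, so plaques of $\f\vert_\Sigma$ are locally forward and backward equidistant.

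The main obstacle is that the diffeomorphism in Theorem~\ref{SliceThm}(iii) is not the exponential map, so ``straight lines'' in the model need not correspond to $F$-geodesics in $M$. I would handle this by arguing purely with the model foliation, which is a genuine SFF on the Minkowski space, to obtain the linear structure of $Z$. I would then transfer only diffeomorphism-invariant conclusions (submanifold, dimension of leaves) and the leaf distances, which coincide by Theorem~\ref{SliceThm}(ii). For the geodesic statement in the last step, I would argue directly in $M$ with the future/past cylinders around a point-leaf of $\f_q$ rather than through the model.
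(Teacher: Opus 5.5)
Your reduction to the slice is sound, and so is your proof that in the Minkowski model the point-leaves form, locally, a linear subspace. This is the route the paper indicates. It defers the proof to \cite[Proposition~3.22]{Alexandrino2019singular}, notes that the Slice Theorem~\ref{SliceThm} yields the stratification, and uses exactly this linearity of the minimal stratum in Lemma~\ref{LinearTangencyMinStrat}. The equality case of the triangle inequality for the strictly convex norm $\widehat F_q$ correctly handles all three positions of $y$. The size of the neighborhoods causes no trouble either: every line through a point-leaf $x$ is horizontal, so $\widehat F_q(\cdot-x)$ and $\widehat F_q(x-\cdot)$ are constant on plaques throughout a convex neighborhood of $0$.

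The gap is the final claim, that $\f\vert_{\Sigma}$ is a Finsler foliation for $F\vert_{T\Sigma}$. Your first justification (the minimizing geodesic is a line inside $Z$) is invalid, as you note yourself, and the replacement is only announced. More importantly, the mechanism of the model argument does not transfer. In $M$ the relevant cylinders are around the plaques $P_q$ and $P_x$, not around points. The equality $d(P_q,z)+d(z,P_x)=d(P_q,P_x)$ then does not force $z$ onto $\gamma$; it only puts $z$ on \emph{some} minimizing segment from $P_q$ to \emph{some} point of $L_x$. Strict convexity gives nothing more, so a dimension count is what is missing.

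Here is how it goes:
\begin{enumerate}[label=\textup{(\arabic*)}]
\item Let $\gamma$ be the minimizing segment from $P_q$ to $x\in\Sigma$ near $q$, of length $a$, and let $y=\gamma(t)$.
\item All points of $P_x$ are at distance $a$ from $P_q$, so $d(P_q,y)=t$ and $d(y,P_x)=a-t$.
\item Forward equidistance around $P_q$ and backward equidistance around $P_x$ then give $d(P_q,z)=t$ and $d(z,P_x)=a-t$ for every $z$ in the plaque of $y$.
\item If $x'\in P_x$ realizes $d(z,P_x)$, then $d(P_q,x')=a$. So the broken curve from $P_q$ through $z$ to $x'$ is minimizing, hence it is the unique minimizing segment from $P_q$ to $x'$, i.e.\ $z=h^{+}_{t/a}(x')$.
\item The plaque of $y$ is therefore contained in the image of a $(\dim L_q)$-dimensional manifold under the smooth map $h^{+}_{t/a}$, which forces $\dim L_y\leq\dim L_q$.
\item Lower semicontinuity of leaf dimension at $q$ gives the reverse inequality, hence $\gamma\subset\Sigma$.
\end{enumerate}

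Only with $\gamma\subset\Sigma$ in hand do the remaining conclusions follow. The segment $\gamma$ is then a geodesic of $(\Sigma,F\vert_{T\Sigma})$, and intrinsic and ambient distances from $P_q$ agree on $\Sigma$ near $P_q$. Running the same argument for the reversed metric, the plaques of $\f\vert_{\Sigma}$ are locally forward and backward equidistant, so \cite[Lemma~3.7]{Alexandrino2019singular} applies.
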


At last, the stratification structure allows us to create a criterion to determine if $\f$ is a SFF with respect to another Finsler metric.

\begin{proposition}\cite[Proposition~3.23]{Alexandrino2019singular}
\label{SFFstratCriterion}
Suppose there exists a complete Finsler metric $\widetilde F$ on $M$ such that the foliation restricted to each stratum is a (regular) Finsler foliation with respect to $\widetilde{F}$. Then $\f$ is a singular Finsler foliation on $(M, \widetilde F)$.    
\end{proposition}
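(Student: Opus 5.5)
The plan is to use the local equidistance characterization of singular Finsler foliations recalled before Lemma~\ref{HomotheticLemma}, namely \cite[Lemma~3.7]{Alexandrino2019singular}. It suffices to show that, for every $q\in M$ and a sufficiently small trivializing neighborhood $U_q$, the $\widetilde F$-future distance $\tilde f^{+}_q=\tilde d(P_q,\cdot)$ is constant on each plaque of $\f\cap U_q$, and likewise for the past distance $\tilde f^{-}_q$. Equivalently, I want to show that an $\widetilde F$-geodesic leaving $P_q$ $\widetilde F$-orthogonally stays $\widetilde F$-orthogonal to every plaque it meets in $U_q$. The argument is local, so I would work in $U_q$ and proceed by induction on the depth of the stratification, using Proposition~3.22 of \cite{Alexandrino2019singular}. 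Since $\f$ has finitely many strata near $q$, the stratum $\Sigma_q$ through $q$ is minimal in $U_q$, and the other strata meeting $U_q$ have larger leaf dimension. The inductive hypothesis would give the conclusion on $U_q\setminus\Sigma_q$, since every point there lies in a stratum of smaller depth.

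The key step is the following. Let $x\in U_q\setminus P_q$ and let $\gamma$ be the minimizing $\widetilde F$-geodesic from $P_q$ to $x$, so that $\gamma'(0)\in\nu_{\gamma(0)}(P_q)$ with respect to $\widetilde F$. The first variation formula gives $d\tilde f^{+}_q(w)=\g^{\widetilde F}_{\gamma'}(\gamma',w)$ at $x$. Hence $\tilde f^{+}_q$ is constant on $L_x\cap U_q$ exactly when $\gamma'$ at $x$ is $\widetilde F$-orthogonal to $T_xL_x$. To prove this, I would split $\gamma$ into the finitely many sub-arcs along which it stays in a single stratum; analyticity is not available, so I would use the openness of higher strata together with a genericity argument along $\gamma$.

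\begin{itemize}
\item \emph{Arcs inside a stratum $\Sigma$.} Here $\f|_\Sigma$ is a regular $\widetilde F$-Finsler foliation by hypothesis, so it is locally given by Finsler submersions. By Lemma~\ref{GeodesicLift}, a geodesic of $\Sigma$ that starts orthogonally to the leaves stays orthogonal. It remains to check that an $\widetilde F$-geodesic of $M$ tangent to $\Sigma$ and orthogonal to the leaves stays in $\Sigma$. For this I would use that $\Sigma$ is totally geodesic, arguing via the reflection-type invariance furnished by the Slice Theorem~\ref{SliceThm}: horizontal directions tangent to $\Sigma$ correspond to the fixed set of the local model.
\item \emph{Arcs inside $U_q\setminus\Sigma_q$.} The inductive hypothesis applies there, so orthogonality propagates along these arcs.
\end{itemize}

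Finally I would pass to the closure. By the previous step, $\tilde f^{+}_q$ is constant on plaques of $\f$ contained in the open dense regular stratum. For a singular plaque $P_x$ and two points $x_1,x_2\in P_x$, I would use the local vector fields in the definition of singular foliation to approximate them by points $x_1^k,x_2^k$ on common regular plaques. Continuity of $\tilde f^{+}_q$ then gives $\tilde f^{+}_q(x_1)=\tilde f^{+}_q(x_2)$. The same argument with past distances, together with the completeness of $\widetilde F$ (so that geodesics are defined for all time and the local statements globalize along any geodesic by connectedness of the set of horizontal times), yields that $\f$ is a SFF on $(M,\widetilde F)$.

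The step I expect to be the main obstacle is the approximation in the closure step. It must be done inside a single plaque rather than a leaf, and the regular plaques approaching $P_x$ must be shown to intersect small neighborhoods of both $x_1$ and $x_2$. I would first try to obtain this from local closedness and the Homothetic Transformation Lemma~\ref{HomotheticLemma}: the homothety $h^{+}_\lambda$ centered at $P_x$ (for the original metric $F$) maps a regular plaque near $P_x$ onto regular plaques that project submersively onto all of $P_x$, by \cite[Lemma~3.10]{Alexandrino2019singular}.
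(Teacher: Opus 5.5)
There is a genuine gap, and it is in your key step, not in the closure step you single out. The reduction via \cite[Lemma~3.7]{Alexandrino2019singular} and the first variation formula is fine. The final approximation is routine: the $F$-metric projection of \cite[Lemma~3.10]{Alexandrino2019singular} directly supplies the points $x_i^k$.

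The problem is that your two bullets only \emph{propagate} $\widetilde F$-orthogonality. They work inside a single stratum, or inside $U_q\setminus\Sigma_q$ starting from a point of that set where orthogonality is already known. For the minimizing geodesic $\gamma$ from $P_q$ to $x$, the only orthogonality you have is at $t=0$, where $\gamma(0)\in P_q\subset\Sigma_q$. For $t>0$ the curve typically lies in $U_q\setminus\Sigma_q$, and nothing in your argument gives $\g^{\widetilde F}_{\gamma'(t)}(\gamma'(t),w)=0$ for $w\in T_{\gamma(t)}L_{\gamma(t)}$ at any such $t$. Orthogonality to the leaves is a closed condition. It therefore passes in the limit from higher strata down to a lower one, but not from a lower stratum up to the adjacent higher ones. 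The same issue reappears every time $\gamma$ leaves a lower stratum. This transfer is precisely the content of the proposition, since the hypotheses are stratum-by-stratum and say nothing about how strata interact. So the inductive step is never actually carried out.

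The auxiliary claims do not fill the hole.
\begin{itemize}
\item Strata are in general not totally geodesic, even for SRFs. For the conjugation action of $SO(3)$ on traceless symmetric $3\times 3$ matrices, the stratum $\{a(I-3uu^{T}) : a\neq 0\}$ contains the orbit direction $uw^{T}+wu^{T}$ (with $w\perp u$) at $a(I-3uu^{T})$. Yet the straight line in that direction leaves the stratum immediately.
\item The Slice Theorem~\ref{SliceThm} concerns $F$. It provides a foliated diffeomorphism that is neither an $\widetilde F$-isometry nor an exponential map, so it gives no information about $\widetilde F$-geodesics. That horizontal $\widetilde F$-geodesics tangent to a stratum stay in it is part of what has to be proved.
\item A geodesic not yet known to be horizontal may meet a lower stratum in a set of times with accumulation points. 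So the finite splitting of $\gamma$ is also unjustified.
\end{itemize}

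What is missing is a way to produce $\widetilde F$-orthogonality at a point \emph{outside} $\Sigma_q$, so that the hypotheses on the higher strata can be used. A natural source is variational; the following is a sketch of one possible route, not a complete argument.
\begin{itemize}
\item Take a regular plaque $P_y$ near $P_q$ and a point $y_0\in P_y$ where $\tilde f^{+}_q|_{P_y}$ is minimal. By the first variation formula, the minimizing geodesic from $P_q$ to $y_0$ is $\widetilde F$-orthogonal to $P_y$ at $y_0$.
\item Horizontality can then be propagated \emph{backwards} along this geodesic through the regular part, modulo the other strata it may cross.
\item Lift its transverse projection horizontally through the other points $z\in P_y$ (Lemma~\ref{GeodesicLift}), and use the $F$-tubular structure to see that these lifts end on $\overline{P_q}$. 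This gives $\tilde f^{+}_q(z)\le \tilde f^{+}_q(y_0)$, hence constancy of $\tilde f^{+}_q$ on $P_y$.
\end{itemize}
You need some ingredient of this kind, which exploits minimization rather than only forward propagation from $P_q$. As written, your argument does not prove the proposition.
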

\section{SFF in \texorpdfstring{$(\alpha,\beta)$}{alpha,beta}-spaces: Proof of Theorem~\ref{AlphaBetaSFF}}

Theorem~\ref{AlphaBetaSFF} will stem from Proposition~\ref{SFFstratCriterion} above, and Propositions~\ref{AlphaBetaSFFHoriz}, \ref{AlphaBetaSFFVert} below. To prove the latter, Proposition~\ref{AlphaBetaSubm} will be  necessary, though not enough. We begin with the following auxiliary result.

\begin{lemma}
\label{LinearTangencyMinStrat}
Let $F = \alpha \phi \big( \tfrac{\beta}{\alpha} \big)$ be an $(\alpha,\beta)$-Minkowski norm on a real $n$-dimensional vector space $V$, and let $\f$ be a singular Finsler foliation of $(V, F)$. Suppose $\phi$ has nowhere vanishing derivative, and the zero vector is a singular leaf; i.e. $L_0 = \{0\}$. Then the vector $\bb$, $\alpha$-dual to the linear functional $\beta$, is tangent to the minimal stratum; i.e. the stratum of dimension zero.
\end{lemma}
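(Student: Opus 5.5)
The plan is to show that the minimal stratum through the origin is a linear subspace $W\subset V$ whose points are all point leaves. Then I will show that $\alpha$, $\beta$ and the $\alpha$-orthogonal projection onto $W$ are constant along leaves. Finally, I will use this to force a point of the line through the $W^{\perp}$-component of $\bb$ to be a point leaf, which gives a contradiction unless $\bb\in W$.

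\emph{Step 1: the minimal stratum is a subspace.} Geodesics of $(V,F)$ are straight lines. Since $L_0=\{0\}$, every ray from $0$ is orthogonal to $L_0$, so every line through $0$ is horizontal. By the Homothetic Transformation Lemma~\ref{HomotheticLemma}, applied with the plaque $\{0\}$, the dilations $x\mapsto \lambda x$ send leaves to leaves near $0$. Hence the stratum $\Sigma$ of zero-dimensional leaves through $0$ is locally a cone. By the stratification result (Proposition~3.22 of \cite{Alexandrino2019singular} quoted above), $\Sigma$ is an embedded submanifold. A smooth submanifold through $0$ that is invariant under dilations coincides near $0$ with $W:=T_0\Sigma$. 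So every $w\in W$ near $0$ is a point leaf. (If $W=V$ there is nothing to prove, so I assume $W\neq V$.)

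\emph{Step 2: invariants along leaves.} For each $w\in W$, the leaf $\{w\}$ is a point, so the lines from $w$ are horizontal. By local forward equidistance, $y\mapsto F(y-w)$ is constant on the leaf $L_x$ near $0$, for every $w\in W$.
\begin{itemize}
\item Differentiating this along a tangent vector $z\in T_yL_y$ gives $\ell^F_{y-w}(z)=0$. By Lemma~\ref{AlphaBetaLegendre} this reads $\ga\big(\rho(s_w)(y-w)+F(y-w)\phi'(s_w)\bb,\,z\big)=0$ for all $w\in W$, where $s_w=\beta(y-w)/\alpha(y-w)$.
\item Taking $w=0$ and then differentiating in $w$ along $e\in W$, the vectors obtained span $e$ modulo $\mathrm{span}(y,\bb)$. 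Here I use $\rho>0$ and $\phi'\neq 0$, so that the coefficient of $\bb$ does not degenerate.
\item From this I expect to extract $\ga(z,y)=\ga(z,\bb)=\ga(z,e)=0$. Thus $\alpha$, $\beta$ and $\ga(\cdot,e)$ for $e\in W$ are all constant on leaves.
\end{itemize}
A cleaner route to the first two would be as follows. Constancy of $F(y)$ and $\beta(y)$, together with $\partial_\alpha\big(\alpha\phi(\beta/\alpha)\big)=\lambda(\beta/\alpha)>0$, gives constancy of $\alpha(y)$. Then constancy of $F(y-w)$ and $\beta(y-w)=\beta(y)-\beta(w)$ gives constancy of $\alpha(y-w)$, hence of $\ga(y,w)$. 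So really only $\beta$-constancy needs the derivative argument.

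\emph{Step 3: contradiction.} Write $\bb=\bb_W+\bb'$ with $\bb'\perp_\alpha W$, and suppose $\bb'\neq 0$. Take $x=t\bb'$ with $t>0$ small. For $y\in L_x$, Step 2 gives three facts:
\begin{itemize}
\item $\ga(y,e)=\ga(x,e)=0$ for all $e\in W$, so $y\in W^{\perp}$;
\item $\alpha(y)=\alpha(x)$;
\item $\ga(y,\bb')=\beta(y)=\beta(x)=t\,\alpha(\bb')^2$.
\end{itemize}
On the sphere $\{y\in W^\perp:\alpha(y)=t\,\alpha(\bb')\}$, the function $\ga(\cdot,\bb')$ attains its maximum $t\,\alpha(\bb')^2$ only at $x$, by the equality case of Cauchy--Schwarz. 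Hence $L_x=\{x\}$ is a point leaf. Therefore $x$ lies in the minimal stratum, i.e. $x\in W$. But $x\in W^{\perp}\setminus 0$, a contradiction. So $\bb\in W$.

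The main obstacle is Step 2, namely proving $\beta$ is constant on leaves. The difficulty is guaranteeing that the family of Legendre vectors $\rho(s_w)(y-w)+F(y-w)\phi'(s_w)\bb$, $w\in W$, together with their $w$-derivatives, really spans enough to contain $\bb$ modulo the vectors already known to be orthogonal to $T_yL_y$. This is where $\phi'\neq 0$ is indispensable, consistent with Example~\ref{AlphaBetaSFnotR}. If the spanning argument degenerates, I would instead combine forward and backward equidistance: use constancy of both $F(y-w)$ and $F(w-y)$, then Proposition~\ref{AlphaBeta!Phi} to separate $\alpha$ and $\beta$.
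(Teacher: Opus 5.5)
Your argument is correct once you commit to the fallback you mention at the end of Step~2. The $w$-derivative ``spanning'' route should be discarded. When $W=\{0\}$ (precisely the configuration the lemma must exclude when $\bb\neq 0$) there is no $w$ to vary. The single relation at $w=0$, namely $\ga\big(\rho(s_0)y+F(y)\phi'(s_0)\bb,z\big)=0$, cannot separate $\ga(z,y)$ from $\ga(z,\bb)$.

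The fallback is the integrated form of the paper's first step. It uses constancy of $F(y)$ and $F(-y)$ on leaves near $0$, then injectivity of $s\mapsto\phi(s)/\phi(-s)$ exactly as in the proof of Proposition~\ref{AlphaBeta!Phi}. The paper argues infinitesimally instead:
\begin{itemize}
\item the lines $t\mapsto(1+t)v$ and $t\mapsto(1-t)v$ pass through the point leaf $\{0\}$, so they are horizontal;
\item hence both $\ell^F_v$ and $\ell^F_{-v}$ annihilate $T_vL_v$;
\item adding the two identities from Lemma~\ref{AlphaBetaLegendre} isolates $\ga(\bb,w)$ with coefficient $\phi'(r)/\lambda(r)+\phi'(-r)/\lambda(-r)$, which is nonzero since $\phi'$ has constant sign.
\end{itemize}
The paper's version gives $T_vL_v\subset\ker\beta$ on all of $V$ directly from the definition of an SFF. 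Yours gives constancy of both $\alpha$ and $\beta$ at once, but only near $0$. So at the end you need connectedness of leaves to pass from $L_x\cap U_0=\{x\}$ to $L_x=\{x\}$; the paper invokes this too.

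The endgames differ mildly. The paper uses only constancy of $F$ and $\beta$. The leaf of $v_\delta=\delta\bb/\alpha(\bb)$ lies in an $F$-sphere about $0$ and in the affine hyperplane $\{\beta=\beta(v_\delta)\}$. That hyperplane touches this strictly convex hypersurface of revolution only at its pole, so $L_{v_\delta}=\{v_\delta\}$. Your Cauchy--Schwarz argument on the round $\alpha$-sphere does the same job more elementarily.

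However, the decomposition $\bb=\bb_W+\bb'$ is superfluous, and so are the extra invariants $\ga(\cdot,e)$ for $e\in W$, which need equidistance from every point leaf $w\in W$, not just from $0$. With $\alpha$ and $\beta$ constant on leaves, apply Cauchy--Schwarz directly to $x=t\bb$: the conditions $\alpha(y)=t\alpha(\bb)$ and $\ga(\bb,y)=t\alpha(\bb)^2$ force $y=t\bb$. So $t\bb$ is a point leaf for all small $t>0$, and $\bb\in W$ by Step~1. This is exactly the structure of the paper's proof.
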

\begin{proof}
We will first show that $\bb$ is $\alpha$-horizontal, i.e.
\begin{equation}
\label{B-alpha-horiz}
\ga (\bb , w) = 0 \, \text{ for all } \, w \in T_v L_v \, \text{ and } \, v \in V ,
\end{equation}
where the isomorphism $T_v V \cong V$ is tacitly assumed.

The statement is trivial when $v=0$, because $T_0 L_0 = \{0\}$. For each $v \in V \setminus 0$, the curves $\gamma_v: t \mapsto (t+1)v$ and $\gamma_{-v}: t \mapsto (1-t)v$ are horizontal geodesics, because they pass through the singular leaf $L_0 = \{0\}$. In particular, the Legendre transformations $\ell_v$ and $\ell_{-v}$ vanish on $T_v L_v$. By equation~\eqref{Lambda}, $\lambda(s) := \phi(s) - s\phi^{\prime}(s) > 0$ for all $\vert s \vert \leq \alpha^{\ast}(\beta)$. So it follows from Lemma~\ref{AlphaBetaLegendre} that
\begin{equation*}
\begin{aligned}
\ga \left( \frac{v}{\alpha(v)} + \frac{\phi^{\prime}(r)}{\lambda(r)} \bb , w \right) &= 0 , \\
\ga \left( -\frac{v}{\alpha(v)} + \frac{\phi^{\prime}(-r)}{\lambda(-r)} \bb , w \right) &= 0 ,
\end{aligned}
\end{equation*}
for any $w \in T_v L_v$, and $r := \tfrac{\beta(v)}{\alpha(v)}$. Adding these two equations, we obtain
\begin{equation}
\label{B-alpha-horiz-cond}
\left( \frac{\phi^{\prime}(r)}{\lambda(r)} + \frac{\phi^{\prime}(-r)}{\lambda(-r)} \right) \ga (\bb , w) = 0 .
\end{equation}
When $\phi^{\prime}$ is nonvanishing, we claim that
\begin{equation}
\label{ineqPositive}
\left( \frac{\phi^{\prime}(r)}{\lambda(r)} + \frac{\phi^{\prime}(-r)}{\lambda(-r)} \right) > 0 .
\end{equation}
Indeed, $\phi(s) > s\phi^{\prime}(s)$ and $\phi(-s) > -s\phi^{\prime}(-s)$ for all $\vert s \vert \leq \alpha^{\ast}(\beta)$; recall~\eqref{Lambda}. Hence,
\begin{equation*}
\begin{aligned}
\frac{\phi^{\prime}(r)}{\lambda(r)} + \frac{\phi^{\prime}(-r)}{\lambda(-r)} &= \frac{\lambda(-r)\phi^{\prime}(r) + \lambda(r)\phi^{\prime}(-r)}{\lambda(r) \lambda(-r)} = \frac{\phi(-r)\phi^{\prime}(r) + \phi(r)\phi^{\prime}(-r)}{\lambda(r) \lambda(-r)} \\
&> \frac{-r\phi^{\prime}(-r)\phi^{\prime}(r) + r\phi^{\prime}(r)\phi^{\prime}(-r)}{\lambda(r) \lambda(-r)} = 0 .
\end{aligned}
\end{equation*}
Equations \eqref{B-alpha-horiz-cond} and~\eqref{ineqPositive} imply \eqref{B-alpha-horiz}, just as wanted.

Once $\bb$ is $\alpha$-horizontal,
$$ T_v L_v \subset \{ w \in T_v V : \beta (w) = \ga (\bb,w) = 0 \} .$$
As an element of $\mathfrak{X}(V)$, $\bb$ is a constant vector field. Then, for each $v \in V$,
\begin{equation}
\label{AffineHyperplane}
L_v \subset \mathcal{T}_{v} := \{ y \in V : \beta (y) = \beta (v) \} .
\end{equation}
In other words, $L_v$ is contained in the affine hyperplane $\mathcal{T}_{v}$, which contains $v$ and has $\bb$ as a normal vector.

We can, at last, prove $\bb$ is tangent to the minimal stratum $\Sigma_0$, containing zero.

If $\beta$ is identically null there is nothing to be done. Assume $\bb\neq 0$. By the stratification structure, $\Sigma_0$ is a vector subspace of $V$, so it suffices to show there is some $\delta > 0$ such that $v_{\delta} := \delta \frac{\bb}{\alpha(\bb)} \in \Sigma_0$, i.e. $L_{v_{\delta}} = \{v_{\delta}\}$.

Let $U_0$ be a tubular neighborhood of the singular leaf $L_{0} = \{ 0\}$. Choose $\delta>0$ small enough to consider a future cylinder $C^{+}_{\delta}(0) \subset U_0$. Then $L_{v_{\delta}} \cap U_0 \subset C^{+}_{\delta}(0)$ and, by \eqref{AffineHyperplane}, $L_{v_{\delta}}$ is contained in the affine hyperplane $\mathcal{T}_{v_{\delta}}$. Since $C_{\delta}^{+}(0)$ is invariant by the rotations of $(V,\alpha)$ fixing $\bb$ (recall Corollary~\ref{AlphaBetaIsometry}), the hyperplane $\mathcal{T}_{v_{\delta}}$ is tangent to $C_{\delta}^{+}(0)$ at $v_{\delta}$. Thus,
\begin{equation*}
L_{v_{\delta}} \cap U_0 \subset C_{\delta}^{+}(0) \cap \mathcal{T}_{v_{\delta}} = \{v_{\delta}\} ,
\end{equation*}
and the connectedness of leaves finishes the proof.
\end{proof}

As mentioned in Remark~\ref{TangencyToStrata}, by Lemma~\ref{LinearTangencyMinStrat} and the Slice Theorem~\ref{SliceThm}, if $\phi$ has nowhere zero derivative, then $\bb$ is tangent to the stratum. Proposition~\ref{AlphaBetaSFFHoriz} now follows directly from Proposition~\ref{AlphaBetaSubm}.

\begin{proposition}
\label{AlphaBetaSFFHoriz}
Let $\f$ be a SFF on the $(\alpha,\beta)$-Finsler manifold $(M,F)$, where $F = \alpha \phi \big( \tfrac{\beta}{\alpha} \big)$. Suppose $\phi$ has nowhere zero derivative and $\bb$ ($\alpha$-dual to $\beta$) is $\alpha$-orthogonal to the leaves of $\f$. Then $\bb$ is foliated and horizontal, and $\f$ restricted to each stratum is a Riemannian foliation with respect to the norm $\alpha$.
\end{proposition}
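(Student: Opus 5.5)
The plan is to reduce to the submersion case, stratum by stratum, and then apply item~\ref{s2} of Proposition~\ref{AlphaBetaSubm}. Fix a stratum $\Sigma$. By \cite[Proposition~3.22]{Alexandrino2019singular}, $\Sigma$ is an embedded submanifold and $\f\vert_{\Sigma}$ is a regular Finsler foliation with respect to the induced metric $F\vert_{T\Sigma}$. Around each point, $\f\vert_\Sigma$ is described by the fibers of a Finsler submersion $\pi:(U,F\vert_{T\Sigma})\to (S,F_2)$.

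The first step is to check that $F\vert_{T\Sigma}$ is again an $(\alpha,\beta)$-metric with the same $\phi$, and that its data are $\alpha\vert_{T\Sigma}$ and $\beta\vert_{T\Sigma}$. For that one needs the $\alpha\vert_\Sigma$-dual of $\beta\vert_{T\Sigma}$ to agree with $\bb$. This holds because $\bb$ is tangent to $\Sigma$, which is exactly Remark~\ref{TangencyToStrata}: it follows from Lemma~\ref{LinearTangencyMinStrat} applied to the linearized slice foliation given by the Slice Theorem~\ref{SliceThm}, whose minimal stratum corresponds to $\Sigma$. I expect this to be the main obstacle. The Slice Theorem gives only a foliated diffeomorphism onto a SFF of the Minkowski space $(T_qS_q,\widehat F_q)$, so one must verify two things. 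First, $\widehat F_q$ is an $(\alpha,\beta)$-norm with nonvanishing $\phi'$; this uses item~\ref{s1} of Proposition~\ref{AlphaBetaSubm} (the flattened metric on the slice is the quotient norm), together with the computation of item~\ref{s2} when $\bb$ is horizontal. Second, the $\bb$ obtained on the slice is the horizontal projection of the original $\bb$. Granting this, the tangency of $\bb$ to the slice minimal stratum translates into $\bb(q)\in T_q\Sigma$.

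Next, the hypothesis that $\bb$ is $\alpha$-orthogonal to the leaves of $\f$ implies, in particular, that $\bb$ is $\alpha\vert_\Sigma$-orthogonal to the fibers of $\pi$. Since $\phi'$ never vanishes, item~\ref{s2} of Proposition~\ref{AlphaBetaSubm} then applies to $\pi$. It gives three conclusions. First, $\bb$ is foliated and horizontal for $\f\vert_\Sigma$. Second, $F_2$ is an $(\alpha,\beta)$-metric $\alpha_2\phi(\beta_2/\alpha_2)$. Third, $\pi:(U,\ga\vert_\Sigma)\to(S,\ga_2)$ is a Riemannian submersion. Hence $\f\vert_\Sigma$ is a Riemannian foliation for $\alpha$.

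It remains to pass from the strata to all of $M$ for the claims about $\bb$. Horizontality in the Finsler sense at any point follows from Lemma~\ref{AlphaBetaLegendre}. If $\ga(\bb,w)=0$ for all $w$ tangent to the leaf, then $\ell^F_{\bb}(w)=\ga(\rho\,\bb+F(\bb)\phi'\,\bb,w)=0$, so $\bb$ is $F$-orthogonal to the leaves. Foliatedness holds on each stratum by the above. Since $\bb$ is tangent to each stratum, its flow preserves the strata, and therefore it maps leaves to leaves globally.
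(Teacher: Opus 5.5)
Your proposal is correct and follows the paper's own route. The paper also gets tangency of $\bb$ to the strata from Lemma~\ref{LinearTangencyMinStrat} and the Slice Theorem~\ref{SliceThm} (Remark~\ref{TangencyToStrata}), and then applies item~\ref{s2} of Proposition~\ref{AlphaBetaSubm} to the regular Finsler foliation on each stratum; your added details (the flattened slice norm $\widehat F_q$ is an $(\alpha,\beta)$-norm with the same $\phi$, the Legendre-map check of $F$-horizontality, and the passage from stratumwise to global foliatedness of $\bb$) fill in what the paper leaves implicit.
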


Next, the proof of Proposition~\ref{AlphaBetaSFFVert} is neither long nor involving, but it does have some subtlety.

\begin{proposition}
\label{AlphaBetaSFFVert}
Let $\f$ be a SFF on the $(\alpha,\beta)$-Finsler manifold $(M,F)$, where $F = \alpha \phi \big( \tfrac{\beta}{\alpha} \big)$. Assume $\bb$ ($\alpha$-dual to $\beta$) is a nowhere vector field tangent to the leaves of $\f$. There exist a smooth function $\kappa: M \to \mathbb{R}$ such that $\f$ restricted to each stratum is a Riemannian foliation with respect to the Riemannian metric $\kappa\ga$, where $\alpha(\cdot) = \sqrt{\ga(\cdot,\cdot)}$.
\end{proposition}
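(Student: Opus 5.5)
The plan is to define a single global function $\kappa$ from the data $(\alpha,\beta)$ and the foliation, and then to apply Proposition~\ref{AlphaBetaSubm}\ref{s3} stratum by stratum. Concretely, for each $p\in M$ the tangent space $\vt_p := T_pL_p$ contains $\bb(p)$ by hypothesis. Lemma~\ref{BetaOverAlphaCte} then shows that the ratio $\beta/\alpha$ is constant on the $F$-orthogonal cone $\nu_p(L_p)$. Call this constant $h(p)$ and set $\kappa := \rho\circ h$, where $\rho=\phi\lambda>0$. This is exactly the function produced in the proof of item~\ref{s3}. There is one subtlety: that proof defines $h$ fiberwise from a local Finsler submersion. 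Here, on a given stratum $\Sigma$, the orthogonal cone relevant to the restricted foliation $\f\vert_\Sigma$ lies inside $T\Sigma$, not inside $TM$. So I must check that the constant obtained from $\nu_p(L_p)\subset T_pM$ agrees with the one obtained from the cone of $L_p$ inside $T_p\Sigma$.

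First, on each stratum $\Sigma$, the restriction $\f\vert_\Sigma$ is a regular Finsler foliation with respect to $F\vert_{T\Sigma}$. Locally it is given by a Finsler submersion $\pi:(U\cap\Sigma,F\vert_{T\Sigma})\to (S,F_2)$. The restriction $F\vert_{T\Sigma}$ is again an $(\alpha,\beta)$-metric, namely $\alpha\vert_{T\Sigma}\,\phi(\beta\vert_{T\Sigma}/\alpha\vert_{T\Sigma})$, with the same $\phi$. Its dual vector field is the $\alpha$-orthogonal projection of $\bb$ onto $T\Sigma$, and this projection is $\bb$ itself, since $\bb\in TL\subset T\Sigma$. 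Hence Proposition~\ref{AlphaBetaSubm}\ref{s3} applies. It gives that $\pi:(U\cap\Sigma,\kappa_\Sigma\ga)\to(S,\ga_2)$ is a Riemannian submersion, where $\kappa_\Sigma=\rho\circ h_\Sigma$ and $h_\Sigma(p)$ is the value of $\beta/\alpha$ on the $F\vert_{T\Sigma}$-orthogonal cone to $T_pL_p$ inside $T_p\Sigma$. A Riemannian submersion locally describes a Riemannian foliation, so $\f\vert_\Sigma$ is Riemannian for $\kappa_\Sigma\ga$ on $\Sigma$.

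Second, I show that $h_\Sigma=h\vert_\Sigma$. Any $v\in T_p\Sigma$ that is $F$-orthogonal to $T_pL_p$ inside $T_pM$ is in particular $F\vert_{T\Sigma}$-orthogonal to it. So the intersection $\nu_p(L_p)\cap T_p\Sigma$ is contained in the smaller cone. It is nonempty: geodesics of $\Sigma$ that start orthogonally to a leaf stay horizontal, and by the stratification of SFFs, horizontal directions tangent to the stratum exist whenever $\dim\Sigma>\dim L$. Since $\beta/\alpha$ is constant on each of the two cones and they share a vector, the constants agree. In the degenerate case $\dim\Sigma=\dim L$ the stratum consists of a single leaf locally, and nothing needs to be proved there.

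Third, I check that $\kappa$ is smooth on all of $M$. This is the main obstacle, since $h$ is defined via a cone whose dimension jumps across strata. The first thing I would try is to evaluate $h$ on a specific vector. The vector $v\in T_pM$ with $\ell^F_v=\ga(\bb^\perp\text{-part},\cdot)$ is awkward, so instead I use Lemma~\ref{AlphaBetaLegendre}: for $v\in\nu_p(L_p)$ we have $\rho(s)v+F(v)\phi'(s)\bb\in\hz_p$, where $\hz_p$ is the $\alpha$-normal space. Pairing this with $\bb\in\vt_p$ gives $\rho(s)\beta(v)+F(v)\phi'(s)\alpha(\bb)^2=0$. Dividing by $\alpha(v)$, the value $s=h(p)$ solves
\[
\lambda(s)\,s+\phi'(s)\,\alpha(\bb(p))^2=0 .
\]
This equation depends only on $\alpha(\bb(p))^2$, which is smooth in $p$, and not on the strata. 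If $s\mapsto \lambda(s)s+\phi'(s)b^2$ has nonzero derivative at the root, the implicit function theorem makes $h$ a smooth function of $b^2$, and hence smooth on $M$. I would verify this derivative condition using the Minkowski condition $\Phi(s,b)>0$ of Lemma~\cite[Lemma~1.1.2]{Chern2005riemann}. Indeed, the derivative equals $\lambda(s)+s\lambda'(s)+\phi''(s)b^2=\lambda(s)+(b^2-s^2)\phi''(s)=\Phi(s,b)>0$. This also gives uniqueness of the root, so $h$ is well defined and smooth, and the proposition follows with $\kappa=\rho\circ h>0$.
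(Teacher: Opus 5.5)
Your proposal is correct. For the key smoothness step it takes a genuinely different route from the paper.

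The paper argues locally near $q$. It chooses vector fields $X_k$ tangent to $\f$ which, together with $\bb$, span a constant-rank distribution $\mathcal{V}$ with $\mathcal{V}(q)=T_qL_q$ and $\mathcal{V}(p)\subset T_pL_p$. Since $\nu_p(L_p)\subset\nu_p(\mathcal{V}(p))$ and $\bb(p)\in\mathcal{V}(p)$, Lemma~\ref{BetaOverAlphaCte} identifies $h(p)$ with the constant value of $\beta/\alpha$ on $\nu_p(\mathcal{V}(p))$. That value is then read off along a smooth section $Y$ of $\nu^1(\mathcal{V})$.

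You instead pair the Legendre formula of Lemma~\ref{AlphaBetaLegendre} with $\bb$. This gives the scalar equation $s\lambda(s)+\phi'(s)b^2=0$, with $b=\alpha(\bb(p))$. Its $s$-derivative is exactly $\Phi(s,b)>0$, so $h=g(\alpha(\bb)^2)$ for a smooth function $g$ determined by $\phi$ alone.

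Your route buys several things:
\begin{itemize}
\item It gives an explicit formula for $\kappa$.
\item It shows that $\kappa$ does not depend on the foliation at all, only on $\phi$ and $\alpha(\bb)$.
\item It makes your Step~2 redundant. Since the $\alpha\vert_{T\Sigma}$-dual of $\beta\vert_{T\Sigma}$ is $\bb$ itself, $h_\Sigma(p)$ and $h(p)$ are roots of the same equation with the same $b$.
\end{itemize}
It is also to your credit that you address explicitly the compatibility between the cone in $T_pM$ and the cone in $T_p\Sigma$. The paper passes over this when it applies Proposition~\ref{AlphaBetaSubm} on strata. The paper's route, for its part, never uses the Minkowski condition $\Phi>0$. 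It relies only on the symmetry lemma and the defining vector fields of the singular foliation.

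Two small tightenings are worth making.
\begin{itemize}
\item For the implicit-function branch to coincide with $h$ near a point, you need the root to lie strictly inside $(-b,b)$. This holds because $s\lambda(s)+\phi'(s)b^2$ equals $\pm b\,\phi(\pm b)\neq 0$ at $s=\pm b$ when $b>0$. The sign change also gives existence, and monotonicity on $[-b,b]$ gives uniqueness.
\item In Step~2 the geodesic argument for nonemptiness is unnecessary. The set $\nu_p(L_p)\cap T_p\Sigma$ is exactly the $F\vert_{T\Sigma}$-orthogonal cone of $T_pL_p$ inside $T_p\Sigma$, since orthogonality only involves $\g^F_v(v,\cdot)$ on $T_pL_p$. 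That cone is nonempty whenever $\dim\Sigma>\dim L_p$.
\end{itemize}
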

\begin{proof}
In the proof of Proposition~\ref{AlphaBetaSubm} item~\ref{s3}, we have seen that the (intrinsic) function $h: M \to \mathbb{R}$ given by
\begin{equation}
\label{equation-definition-h-sff}
h(p) := \frac{\beta(v)}{\alpha(v)} \, , \, \text{ for any } \, v \in \nu_{p}(L_p) , 
\end{equation}
is well-defined; recall \eqref{BetaOverAlphaFunct}. Also due to Proposition~\ref{AlphaBetaSubm}, we know the function $\kappa: M \to \mathbb{R}$ is defined as
$$ \kappa = \rho \circ h ,$$
where $\rho(s) = \phi(s) (\phi(s) - s\phi^{\prime}(s))$, and it is enough to check that $h$ is smooth.

Consider a (possibly singular) point $q \in M$ and a tubular neighborhood $U_q$ of a plaque $P_q$. The definition of singular foliation allow us to choose a set of vector fields $\{X_k\}$ tangent to $\f$ so that $\{X_k(q)\} \cup \{\bb(q)\}$ is a basis  of $T_{q}L_q$. Reducing  the neighborhood $U_q$ if necessary, we may assume the vectors $\{X_k(p)\} \cup \{\bb(p)\}$ are linearly independent for all $p \in U_q$. Let $\mathcal{V}$ be the distribution spanned by the vector fields $\{X_k\}$; in particular, $\mathcal{V}(q) = T_q L_q$.

By construction,
\begin{equation}
\label{equation-normalV-normalL-Bvertical}
\nu_p (L_p) \subset \nu_p (\mathcal{V}(p)) \, \text{ and } \, \bb(p) \in \mathcal{V}(p) \, \text{ for all } p \in U_q .   
\end{equation}
Equations~\eqref{equation-definition-h-sff}, \eqref{equation-normalV-normalL-Bvertical}, and Lemma~\ref{BetaOverAlphaCte} imply
\begin{equation}
\label{equation-h-V-SFF}
h(p) = \frac{\beta(v)}{\alpha(v)} \, \text{ for all } v \in \nu_p (\mathcal{V}(p)) .
\end{equation}

Finally, consider a vector field $Y \in \mathfrak{X}(U_q)$ so that  
\begin{equation}
\label{equation-vectorY-SFF}
Y(p) \in \nu^{1}_{p}(\mathcal{V}(p))  
\end{equation}
Equations~\eqref{equation-h-V-SFF}, and~\eqref{equation-vectorY-SFF} imply
$$ h(p) = \frac{\beta(Y(p))}{\alpha(Y(p))} ,$$
which concludes the proof that $h$ is smooth. 
\end{proof}

Now, we are ready to prove the theorem.

\begin{proof}[Proof of Theorem~\ref{AlphaBetaSFF}]
From Proposition~\ref{AlphaBetaSFFHoriz} or Proposition~\ref{AlphaBetaSFFVert}, the restriction of $\f$ to each stratum is a Riemannian foliation with respect to the metric $\ga$ or $\kappa \ga$, respectively. Thus, Proposition~\ref{SFFstratCriterion} affirms $\f$ is a SRF on $M$ with respect to $\ga$ or $\kappa \ga$, respectively.
\end{proof}

\begin{remark}
The proof of Remark~\ref{AlphaBetaConverse} is analogous to the argument above, taking into account that a foliated horizontal vector field $\bb$ of a SRF $\f$ must automatically be tangent to the strata.
\end{remark}
\section{Equifocality: Proof of Theorem~\ref{FinslerEquifocal}}

Let us recall the hypotheses of Theorem~\ref{FinslerEquifocal}. On a complete Finsler manifold $(M,F)$, we have a singular Finsler foliation (SFF) $\f = \{ L \}$ such that:
\begin{enumerate}[label=\textup{(\roman*)}]
\item $\f$ is a singular Riemannian foliation (SRF) with respect to a complete Riemannian norm $\alpha$; and

\item $\f$ has (locally) closed leaves.
\end{enumerate}
We summarize the main idea of the theorem's proof in the following lemma.

\begin{lemma}
\label{EquifocalMain}
Under the hypotheses of Theorem~\ref{FinslerEquifocal}, let $U_q$ be a trivializing neighborhood of some plaque $P_q \subset L_q$, as defined in Remark~\ref{TrivNbhd}. Consider a family of unit speed horizontal segments of geodesics $\gamma_{\theta}: [-\delta,\delta] \to U_q \subset (M,F)$, with $\theta \in [-\theta_0,\theta_0]$ for some $\theta_0 > 0$, such that:
\begin{enumerate}[label=\textup{(\roman*)}]
\item $\gamma_{\theta} (-\delta) \in L_{\gamma_{0}(-\delta)}$ for all $\theta$;

\item $q_{\theta} := \gamma_{\theta}(0) \in L_{q}$ for all $\theta$; and

\item $q_{0} = q$ is the only (possibly) singular point on $\gamma_{0}\vert_{[-\delta,\delta]}$, i.e. $L_q$ is may not be in the regular stratum.
\end{enumerate}
Then $\gamma_{\theta}(r) \in L_{\gamma_{0}(r)}$ for all $\theta$ and for $r>0$ small enough.
\end{lemma}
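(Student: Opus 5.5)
We plan to reduce the statement to the Homothetic Transformation Lemma~\ref{HomotheticLemma} applied to the \emph{future} tube around the plaque through $\gamma_0(-\delta)$, together with uniqueness of horizontal geodesics. Set $x_\theta := \gamma_\theta(-\delta)$, all lying on the leaf $L_{x_0}$, which is regular because $q$ is the only possibly singular point of $\gamma_0|_{[-\delta,\delta]}$. Shrinking $\delta$ and $\theta_0$ if necessary, each $\gamma_\theta|_{[-\delta,\delta+\varepsilon]}$ is a minimizing segment starting orthogonally from a small plaque $P_{x_\theta}\subset L_{x_0}$. Hence $\gamma_\theta(t)$ lies on the future cylinder $C^{+}_{t+\delta}(P_{x_0})$ for a suitable plaque $P_{x_0}$ containing all $x_\theta$.

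First, we show that the map $\theta\mapsto\gamma_\theta'(-\delta)$ selects, at each $x_\theta$, a horizontal vector that is ``parallel'' to $\gamma_0'(-\delta)$ in the sense of the foliation. Since $\gamma_\theta(0)\in L_q$ for all $\theta$, the points $\gamma_\theta(0)$ all lie in $L_q\cap C^+_{\delta}(P_{x_0})$. Applying the future homothetic transformation $h^+_\lambda$ centred at $P_{x_0}$ (Lemma~\ref{HomotheticLemma}), the points $\gamma_\theta(-\delta+\lambda\delta)$ lie in a common plaque for each $\lambda\in(0,1]$. Thus on $(-\delta,0]$ all the $\gamma_\theta$ pass through the same leaves as $\gamma_0$.

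Next, we push past $q$. For $r>0$ small, choose the future tube of radius $\delta+r$ around $P_{x_0}$. The point $\gamma_0(r)$ is regular. Consider the plaque $P_{\gamma_0(r)}$; its future/past metric projection onto $P_{x_0}$ along horizontal geodesics is a surjective submersion (the metric projection lemma). So for each $\theta$ there is a point $y_\theta\in P_{\gamma_0(r)}$ whose minimizing segment from $P_{x_0}$ starts at $x_\theta$. The homothety $h^+_\lambda$ with $\lambda=\delta/(\delta+r)$ sends that segment's point to the plaque through $h^+_\lambda(\gamma_0(r))=q$, that is, into $L_q$.

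The key step is to conclude that this segment is exactly $\gamma_\theta$. At a singular point $q$ several horizontal geodesics from $x_\theta$ can reach $L_q$ at the same distance, so Finsler uniqueness alone is insufficient. This is the main obstacle, and it is where hypothesis (i) of Theorem~\ref{FinslerEquifocal} enters. Since $\f$ is also an SRF for $\alpha$, the Riemannian theory gives a local picture near $q$: the slice foliation is infinitesimally homogeneous, and the leaf space near $q$ is an orbifold-like quotient in which the $L_q$-directions are well organized. We will use the Slice Theorem~\ref{SliceThm} to transport the problem into the Minkowski slice $(T_qS_q,\widehat F_q)$. There the $\alpha$-SRF structure forces the foliated diffeomorphism to be compatible with a linear (homogeneous) picture. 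We then argue that the initial vectors $\gamma_\theta'(-\delta)$ form a foliated horizontal field along $\{x_\theta\}$, since they project to the same vector in the local quotient. Uniqueness of the foliated lift then forces $\gamma_\theta$ to coincide with the geodesic reaching $y_\theta$. Finally, a continuity-in-$\theta$ argument (connectedness of $[-\theta_0,\theta_0]$ together with local closedness of leaves) rules out jumping between the finitely many candidate branches. This gives $\gamma_\theta(r)=y_\theta\in L_{\gamma_0(r)}$.
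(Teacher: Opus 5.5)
Your argument on $(-\delta,0)$ is essentially right, though it is cleaner to run it with the past tube around $P_q$, since $q$ sits exactly where the future tube of $P_{x_0}$ degenerates. The crossing of $L_q$, however, which is the whole content of the lemma, is not established, and the framework you set up for it does not exist. When $q$ is singular, it is a focal point of $P_{x_0}$ along $\gamma_0$: the endpoint map of a foliated horizontal field along $P_{x_0}$ collapses the plaque into the lower-dimensional leaf $L_q$ at time $\delta$. Past a focal point, a geodesic stops minimizing the distance to the submanifold. Consequently:
\begin{itemize}
\item for $t>0$ the point $\gamma_\theta(t)$ is not on $C^{+}_{t+\delta}(P_{x_0})$;
\item there is no future tube of radius $\delta+r$ around $P_{x_0}$, and the metric projection onto $P_{x_0}$ is undefined near $\gamma_0(r)$;
\item Lemma~\ref{HomotheticLemma} with $\lambda=\delta/(\delta+r)$ is unavailable.
\end{itemize}
Concentric circles in the Euclidean plane already show this. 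Take a small arc $P_{x_0}$ of the circle of radius $\delta$ around $(\delta,0)$, and $\gamma_0(s)=(-s,0)$. The squared distance from $\delta(\cos t,\sin t)$ to $\gamma_0(r)=(-r,0)$ is $\delta^2+2r\delta\cos t+r^2$, which is \emph{maximal} at $t=0$. So the infimum over the arc is not attained, and your point $y_\theta$ with ``its minimizing segment from $P_{x_0}$'' does not exist.

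There is also a warning sign. Had that tube existed, your argument would close using only SFF tools (metric projection, homothety, uniqueness of horizontal lifts near the regular leaf $L_{x_0}$). Hypothesis (i) of Theorem~\ref{FinslerEquifocal} would then be superfluous. So the real difficulty is not that several geodesics from $x_\theta$ reach $L_q$. It is that tubes and homotheties work on one side of $L_q$ at a time, and say nothing about how the way a horizontal geodesic enters $L_q$ determines the way it leaves.

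Where you do invoke (i), the argument is only a sketch, and its ingredients are false or unusable:
\begin{itemize}
\item The foliated diffeomorphism of the Slice Theorem~\ref{SliceThm} is not an exponential map, so it does not turn $F$-geodesics into anything linear in $(T_qS_q,\widehat F_q)$. The paper points this out explicitly.
\item SRFs need not be infinitesimally homogeneous; Radeschi's Clifford foliations are counterexamples.
\item There is nothing to select among ``branches'', because $\gamma_\theta'(0)=v_\theta$ is already determined.
\end{itemize}
What must be shown is that the incoming relation between $v_\theta$ and $v_0$ (their backward geodesics stay in common leaves) forces the outgoing one. The paper gets this from the Riemannian structure, in three steps.
\begin{enumerate}
\item It uses the foliated diffeomorphisms $\varphi^{-}_\theta\colon S^{-}_{q_\theta}\to S_{q_\theta}$ and $\varphi^{+}_\theta\colon S_{q_\theta}\to S^{+}_{q_\theta}$ between the Finsler past/future slices and the $\alpha$-slice, induced by the plaque projection $\pi$ of $\mathcal{F}^{\pi}$. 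It approximates by $\alpha$-geodesics from $q_\theta$ through $\varphi^{-}_\theta(\gamma_\theta(-\varepsilon))$. Using $\alpha$-equidistance, the homothetic lemma and local closedness as $\varepsilon\to 0$, it obtains $\alpha$-geodesics from $q_\theta$, in the direction of the $\alpha$-projection of $-v_\theta$ onto $\nu^{\alpha}_{q_\theta}L$, whose points stay in common leaves.
\item Equifocality of the SRF for $\alpha$ lets the reversed $\alpha$-geodesics cross $L_q$.
\item The symmetric construction with $\varphi^{+}_\theta$ and $F$-geodesics in $S^{+}_{q_\theta}$ transfers the conclusion back to $\gamma_\theta$. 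This needs a separate argument identifying the limiting initial velocity with $v_\theta$, via the $\mathcal{F}^{\pi}$-saturation of the projected line.
\end{enumerate}
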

\begin{proof}
We start by defining several mathematical objects. Without loss of generality (reducing $\delta> 0$ if necessary), assume $\exp^{F}_{q_{\theta}}(B_{\delta}(0))$, $\exp^{F(-)}_{q_{\theta}}(B_{\delta}(0))$, and $\exp^{\alpha}_{q_{\theta}}(B_{\delta}(0))$ are subsets of $U_q$ for all $\theta$.
\begin{enumerate}[label=\textbf{\roman*.}]
\item $S_{q_{\theta}}^{+} := \exp_{q_\theta}^{F}\big(\ \nu_{q_\theta}^{F} L \cap B_{\delta}(0) \big);$

\item $S_{q_{\theta}}^{-} :=\exp_{q_\theta}^{F(-)} \big(\nu_{q_\theta}^{F(-)} L \cap B_{\delta}(0) \big);$

\item $S_{q_\theta} := \exp_{q_\theta}^{\alpha}\big( \nu_{q_\theta}^{\alpha} L \cap B_{\delta}(0) \big);$

\item $v_{\theta}:=\gamma^{\prime}_{\theta}(0);$

\item $\widehat{\gamma}_{\theta}(r) := \exp_{q_\theta}^{F(-)}(-r v_{\theta})$, where $r>0$.
\end{enumerate}
Note that $S_{q_{\theta}}^{+}, S_{q_{\theta}}^{-}, S_{q_{\theta}} \subset U_q$ for all $\theta$, and $\widehat{\gamma}_{\theta}(r)=\gamma_{\theta}(-r)$ for all $r \in (0,\delta]$.

We also need to define two families of foliated diffeomorphisms.
\begin{enumerate}[label=\textbf{\roman*.},start=6]
\item $\varphi_{\theta}^{-}: \big( S_{q_\theta}^{-} \setminus \{ q_\theta \} , S_{q_\theta}^{-} \cap \f \big) \to \big( S_{q_\theta} \setminus \{ q_\theta \} , S_{q_\theta} \cap \f \big)$;

\item $\varphi_{\theta}^{+}: \big( S_{q_\theta} \setminus \{ q_\theta \} , S_{q_\theta} \cap \f \big) \to \big( S_{q_\theta}^{+} \setminus \{ q_\theta \} , S_{q_\theta}^{+} \cap \f \big)$.
\end{enumerate}
These maps will extend to $q_{\theta}$ as homeomorphisms. For their constructions, recall that, by Remark~\ref{TrivNbhd}, there exists a regular foliation $\f^{\pi}$ on $U_q$ such that:
\begin{itemize}
\item $\f^{\pi}$ is a subfoliation of $\f_{U_q}:= \f \cap U_q$, i.e. the leaves of $\f^{\pi} = \{ L^{\pi} \}$ are contained in the leaves of $\f_{U_q}$; and

\item the leaf $L_{q}^{\pi}$ is an open subset of $L_q$.
\end{itemize}
From the construction of $\f^{\pi}$, for each $x \in S_{q_\theta}$, we have
\begin{equation*}
L^{\pi}_x \cap S_{q_\theta}^{-} = \{ x^{-} \} , \, \text{ and } \, L^{\pi}_x \cap S_{q_\theta}^{+} = \{ x^{+} \} .
\end{equation*}
So we define the foliated diffeomorphisms as:
\begin{equation}
\label{EquifocalEq1Diffeos}
\varphi_{\theta}^{-} (x^{-}) = x , \, \text{ and } \, \varphi_{\theta}^{+} (x) = x^{+} .
\end{equation}

The proof will be divided into 3 steps.

\begin{step}
\label{step1}
Consider the variation $\theta \mapsto \varphi_{\theta}^{-} \circ \widehat{\gamma}_{\theta}$. It has the interesting property that, for each $r \in (0,\delta]$, the points $\varphi_{\theta}^{-} \circ \widehat{\gamma}_{\theta} (r)$ lie in the same leaf of $\f$ for all $\theta$, because $\varphi^{-}_{\theta}$ are foliated diffeomorphisms and, by the hypotheses, $L_{\widehat{\gamma}_{\theta} (r)} = L_{\widehat{\gamma}_{0} (r)}$ for all $\theta$ and each $r \in (0,\delta]$. However, the curves in this variation are not necessarily geodesics with respect to $\alpha$. Therefore, we need to consider a variation of $\alpha$-geodesics. For $v_{\theta}^{0} := - (\varphi_{\theta}^{-} \circ \widehat{\gamma}_{\theta})^{\prime} (0)$, we define the variation
\begin{equation*}
\theta \mapsto \gamma_{\theta}^{0} : r \mapsto \exp_{q_\theta}^{\alpha} \left( - r \frac{v_{\theta}^{0}}{\alpha(v_{\theta}^{0})} \right) .
\end{equation*}
We will prove, vide Equation~\eqref{EquifocalEq5}, the new variation still has the property that $\theta \mapsto \gamma_{\theta}^{0} (r)$ always lies in the same leaf of $\f$ (for each $r \in (0,\delta]$); see Figure~\ref{EquifocalFig1}.
\end{step}

\begin{figure}
\centering
\includegraphics[scale=0.5]{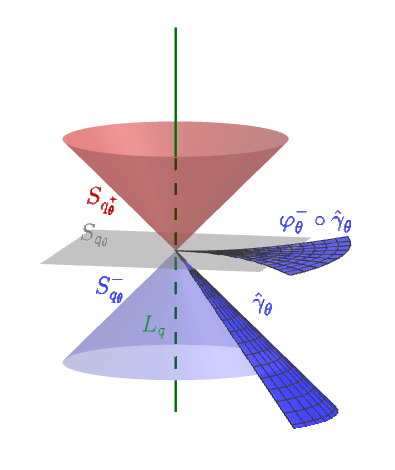}
\caption{Diagram illustrating Step~\ref{step1} of the proof of Lemma~\ref{EquifocalMain} in the particular case when $q_{\theta}=q$.}
\label{EquifocalFig1}
\end{figure}

We first claim that
\begin{equation*}
- v_{\theta}^{0} = (\varphi_{\theta}^{-} \circ \widehat{\gamma}_{\theta})^{\prime} (0)
\end{equation*}
is the orthogonal projection of $- v_{\theta}$ onto $\nu^{\alpha}_{q_\theta} L$ with respect to the Riemannian norm $\alpha$. In order to verify it, consider the projection $\pi: U_q \to S_q$ that describes the leaves of $\f^\pi$, i.e. $L^{\pi}_p = \pi^{-1}(s)$, where $s=\pi(p)$. For each $\theta$, $\pi$ restricts to a projection from $\exp_{q_{\theta}}^{F(-)} (B_{\delta}(0))$ onto $S_{q_{\theta}}$. Moreover, $\pi\vert_{S_{q_\theta}^{-}} = \varphi^{-}_{\theta}$. So the $\alpha$-orthogonal projection of $- v_{\theta}$ onto $\nu^{\alpha}_{q_\theta} L$ is $d \pi (- v_{\theta}) = (\pi \circ \widehat{\gamma}_{\theta})^{\prime} (0) = (\varphi_{\theta}^{-} \circ \widehat{\gamma}_{\theta})^{\prime} (0) = -v^0_{\theta}$, as wanted.

Next, we construct a family of unit speed segments of geodesics $\gamma_{\theta}^{\varepsilon}: [0,\delta] \to (S_{q_\theta}, \alpha)$ for $0 < \varepsilon \leq \delta$ by requiring that $\gamma_{\theta}^{\varepsilon}(0) = q_{\theta}$ and
\begin{equation}
\label{EquifocalEq2}
\gamma_{\theta}^{\varepsilon} (r^{\varepsilon}_{\theta}) =     \varphi_{\theta}^{-} \circ \widehat{\gamma}_{\theta} (\varepsilon) \, \text{ for some } \, r^{\varepsilon}_{\theta} \in (0,\delta] .
\end{equation}
They are well-defined because $\exp_{q_\theta}^{\alpha}:(\nu_{q_\theta}^{\alpha}L\cap B_{\delta}(0))\to S_{q_\theta}$ is  a bijective mapping. The definition of $\varphi_{\theta}^{-}$ in \eqref{EquifocalEq1Diffeos} gives that
\begin{equation}
\label{EquifocalEq3}
\varphi_{\theta}^{-} \circ \widehat{\gamma}_{\theta}(\varepsilon) \in L_{\widehat{\gamma}_{\theta}(\varepsilon)} \cap S_{q_\theta} .
\end{equation}
The hypotheses of the lemma assured us that
\begin{equation}
\label{EquifocalEq4} 
L_{\widehat{\gamma}_{\theta} (\varepsilon)} = L_{\widehat{\gamma}_{0}(\varepsilon)} \, \text{ for all } \, \theta .
\end{equation}
Equations~\eqref{EquifocalEq2}, \eqref{EquifocalEq3}, and~\eqref{EquifocalEq4} imply that $r^{\varepsilon} := r_{\theta}^{\varepsilon}$ is independent of $\theta$, and
\begin{equation*}
\gamma^{\varepsilon}_{\theta} (r^{\varepsilon}) \in L_{\widehat{\gamma}_{0}(\varepsilon)} \, \text{ for all } \, \theta . 
\end{equation*}
The above equation and the Homothetic Transformation Lemma~\ref{HomotheticLemma} guarantee the existence of some $x^{\varepsilon,r} \in C_{r}^{\alpha}(P_q)$, the cylinder of radius $r \in (0,\delta]$ around the plaque $P_q$ with respect to the Riemannian metric $\alpha$, such that
\begin{equation*}
\gamma_{\theta}^{\varepsilon} (r) \in L_{x^{\varepsilon, r}} \, \text{ for all } \, \theta .
\end{equation*}
Once $\displaystyle \lim_{\varepsilon \to 0}(\gamma_{\theta}^{\varepsilon})^{\prime}(0) = - \tfrac{v_{\theta}^{0}}{\alpha(v_{\theta}^{0})}$, and $\f$ is locally closed, it follows that
\begin{equation}
\label{EquifocalEq5}
\gamma_{\theta}^{0}(r)\in L_{\gamma_{0}^{0}(r)} \, \text{ for all } \, \theta ,
\end{equation}
where $r \mapsto \gamma_{\theta}^{0}(r):= \exp_{q_\theta}^{\alpha} \left( - r \frac{v_{\theta}^{0}}{\alpha(v_{\theta}^{0})} \right)$ for $r\in (0,\delta]$.

\begin{step}
\label{step2}
We now consider the reverse Riemannian geodesics $r \mapsto \gamma^{\alpha}_{\theta} (r) := \gamma^0_{\theta}(-r)$ for $r \in [0,\delta]$ and check that, for each $r \in (0,\delta]$, $\theta \mapsto \gamma_{\theta}^{\alpha}(r)$ is always contained in the same leaf of $\f$; see Figure~\ref{EquifocalFig2}.
\end{step}

\begin{figure}
\centering
\includegraphics[scale=0.5]{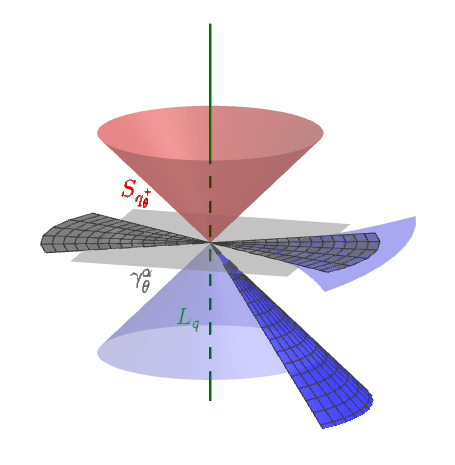}
\caption{Diagram illustrating Step~\ref{step2} of the proof of Lemma~\ref{EquifocalMain} in the particular case when $q_{\theta}=q$.}
\label{EquifocalFig2}
\end{figure}

Indeed, $\f$ is a singular Riemannian foliation (SRF) with respect to the Riemannian norm $\alpha$, and regular leaves of SRFs are equifocal. So:
\begin{equation}
\label{EquifocalEq6}
\gamma_{\theta}^{\alpha}(r)= \exp_{q_\theta}^{\alpha} \left( r \frac{v_{\theta}^{0}}{\alpha(v_{\theta}^{0})} \right) \in L_{\gamma_{0}^{\alpha}(r)} \cap S_{q_{\theta}} .
\end{equation}

\begin{step}
\label{step3}
We can proceed to our main goal: showing that
\begin{equation}
\label{EquifocalEq7}
\gamma_{\theta}(r) = \exp_{q_\theta}^{F}(r v_{\theta}) \in L_{\gamma_{0}(r)} , \, \text{ for all } \theta \text{ and small } r>0 .
\end{equation}
Here, the arguments and strategies are similar to those of Step~\ref{step1}; see Figure~\ref{EquifocalFig3}.
\end{step}

\begin{figure}[ht]
\centering
\includegraphics[scale=0.5]{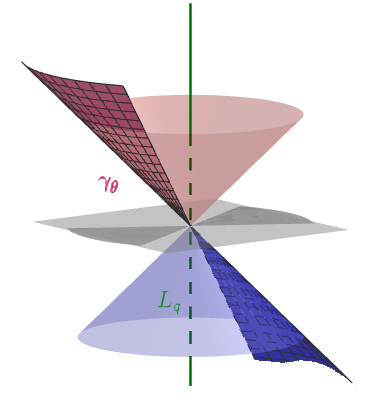}
\caption{Diagram illustrating Step~\ref{step3} of the proof of Lemma~\ref{EquifocalMain} in the particular case when $q_{\theta}=q$.}
\label{EquifocalFig3}
\end{figure}

Define a family of unit speed  segments of geodesics $\gamma_{\theta}^{+,\varepsilon}: [0,\delta] \to (S_{q_\theta}^{+},F)$ for $0 < \varepsilon \leq \delta$ by the conditions $\gamma_{\theta}^{+,\varepsilon}(0) = q_{\theta}$ and
\begin{equation}
\label{EquifocalEq8}
\gamma_{\theta}^{+,\varepsilon}(\tilde{r}^\varepsilon_{\theta}) =     \varphi_{\theta}^{+}\circ\gamma_{\theta}^{\alpha}(\varepsilon)  \, \text{ for some } \, \tilde{r}^{\varepsilon}_{\theta} \in (0,\delta] .
\end{equation}
From the definition of $\varphi_{\theta}^{+}$ in \eqref{EquifocalEq1Diffeos}, we have
\begin{equation}
\label{EquifocalEq9}
\varphi_{\theta}^{+}\circ\gamma_{\theta}^{\alpha}(\varepsilon)\in L_{\gamma_{\theta}^{\alpha}(\varepsilon)} \cap S_{q_\theta}^{+} .
\end{equation}
Equations~\eqref{EquifocalEq8}, \eqref{EquifocalEq9}, and \eqref{EquifocalEq6} imply that $\tilde{r}^{\varepsilon}_{\theta} := \tilde{r}^{\varepsilon}$ is independent of $\theta$, and
\begin{equation*}
\gamma_{\theta}^{+,\varepsilon}(\tilde{r}^\varepsilon)\in L_{\gamma_{0}^{\alpha}(\varepsilon)} \, \text{ for all } \, \theta .
\end{equation*}
The above equation and the Homothetic Transformation Lemma~\ref{HomotheticLemma} now garantee the existence of $x^{\varepsilon,\tilde{r}} \in C_{\tilde{r}}^{F}(P_{q})$, the cylinder of radius $\tilde r \in (0,\delta]$ around the plaque $P_q$ with respect to the Finsler metric $F$, such that
\begin{equation}
\label{EquifocalEq10}
\gamma_{\theta}^{+,\varepsilon}(\tilde{r})\in L_{x^{\varepsilon,\tilde{r}}} .
\end{equation}

We claim that
\begin{equation}
\label{EquifocalEq11}
\lim_{\varepsilon\to 0} (\gamma_{\theta}^{+,\varepsilon})^{\prime} (0) = v_{\theta} = \gamma_{\theta}^{\prime} (0) .
\end{equation}
Equations~\eqref{EquifocalEq10}, \eqref{EquifocalEq11}, and the fact that $\f$ is locally closed will imply the goal \eqref{EquifocalEq7}, concluding the proof. So it only remains to prove \eqref{EquifocalEq11}.

By contradiction, suppose \eqref{EquifocalEq11} fails. Then, by compactness, there exists a subsequence $\varepsilon_{n} \to 0$ such that
\begin{equation}
\label{EquifocalEq12}
\lim_{n\to 0} (\gamma_{\theta}^{+,\varepsilon_n})^{\prime} (0 )=: w_{\theta} \neq v_{\theta} ,
\end{equation}
where $w_\theta \in \nu_{q_\theta}^{F} L$, and $F(w_{\theta}) = 1$. Since $\f^\pi$ is a regular foliation, we can assume (using a trivialization of the foliation) that the leaves of $\f^\pi$ are preimages of the canonical linear submersion $\pi: \mathbb{R}^{l+k} \to \mathbb{R}^k$, defined as $ \pi(x,y) = x$, i.e. $L_{y}^{\pi} = \pi^{-1}(x)$. Using a diffeomorphism on $\mathbb{R}^{k}$, it is also possible to assume that $\pi \circ \gamma_{\theta}^{\alpha}$ is the line $t \mapsto \sigma(t) := t \tfrac{v_{\theta}^{0}}{\alpha(v_{\theta}^{0})}$. Consider the $\f^{\pi}$-saturation of $\sigma$, i.e. the subspace
\begin{equation}
\label{EquifocalEq13}
\Sigma := \bigcup_{y\in\sigma} L^{\pi}_{y} = \pi^{-1}(\sigma).  
\end{equation}
On one hand, the definition of $\varphi_{\theta}^{+}$ in \eqref{EquifocalEq1Diffeos} gives that
\begin{equation}
\label{EquifocalEq14}
\varphi^{+}_{q_\theta} \circ \gamma_{\theta}^{\alpha} \subset \Sigma \cap S_{q_\theta}^{+} .   
\end{equation}
At the same time,
\begin{equation}
\label{EquifocalEq15}
w_{\theta} \notin T_{q_\theta} \Sigma ;
\end{equation}
otherwise, its $\alpha$-projection $w^0_{\theta}$ would be a multiple of $v^{0}_{\theta}$ by \eqref{EquifocalEq13}, which would mean that the $F$-unit vector $w_{\theta}$ should be a multiple of the $F$-unit vector $v_{\theta}$, contradicting \eqref{EquifocalEq12}. From \eqref{EquifocalEq15}, there must exist $N, \tilde \delta> 0$ such that
\begin{equation}
\label{EquifocalEq16}
\gamma_{\theta}^{+,\varepsilon_n} (\tilde r) \notin \Sigma \, \text{ for } \, n > N \, \text{ and } 0 < \tilde r < \tilde \delta .
\end{equation}
Equations~\eqref{EquifocalEq14} and~\eqref{EquifocalEq16} contradict the construction of $\gamma_{\theta}^{+,\varepsilon}$; recall~\eqref{EquifocalEq8}. Equation~\eqref{EquifocalEq11} is thusly verified.
\end{proof}

Call to mind the concept of endpoint map from Definition~\ref{DefEquifocal}, i.e. the map $\eta_{r\xi}: P_q \to M$, for a plaque $P_q$ of a regular leaf, a foliated horizontal unit vector field $\xi$, and $r > 0$, which is simply given by $\eta_{r\xi}(x) = \exp_x (r\xi)$. The next result basically renders Lemma~\ref{EquifocalMain} into properties of endpoint maps.

\begin{lemma}
\label{EquifocalLemma}
Under the hypotheses of Theorem~\ref{FinslerEquifocal}, let $\gamma: \mathbb{R} \to (M,F)$ be a unit speed horizontal geodesic starting at regular leaf $L_{q}$; to be precise, $\gamma(0) = q$. For each $t_0 > 0$, there exists a neighborhood $U$ of $\gamma(t_0)$ in $M$ and $\delta > 0$ such that:
\begin{enumerate}[label=\textup{(\roman*)}]
\item\label{e1} $\eta_{r\xi} (P_{\gamma(t)}) \subset L_{\gamma(t+r)}$ for all $t \in (t_0 - \delta , t_0)$, $t+r \in (t_0 - \delta, t_0 + \delta)$ with $r>0$, where $P_{\gamma(t)}$ is the connected component of $L_{\gamma(t)} \cap U$ that contains $\gamma(t)$;

\item\label{e2} $d\eta_{r\xi}: TP_{\gamma(t)} \to TM$ has constant rank.
\end{enumerate}
\end{lemma}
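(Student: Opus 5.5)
We will derive both items from Lemma~\ref{EquifocalMain}, applied to the families of geodesics $\theta \mapsto \eta_{r\xi}$ restricted to curves in a plaque, with the possibly singular point $\gamma(t_0)$ playing the role of $q$.

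\textbf{Setup.} Fix $t_0>0$ and put $q_0:=\gamma(t_0)$. Choose a trivializing neighborhood $U=U_{q_0}$ as in Remark~\ref{TrivNbhd}, and choose $\delta>0$ small enough that $\gamma|_{[t_0-\delta,t_0+\delta]}\subset U$ and $q_0$ is the only possibly singular point on this segment. That this is possible follows from the stratification: singular points along a horizontal geodesic are isolated, by the Slice Theorem~\ref{SliceThm} together with the Homothetic Transformation Lemma~\ref{HomotheticLemma}. For $t\in(t_0-\delta,t_0)$ the leaf $L_{\gamma(t)}$ is regular. Let $\xi$ be the foliated horizontal unit field along $P_{\gamma(t)}$ extending $\gamma'(t)$. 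Because $\f\cap U$ near a regular point is described by a Finsler submersion, $\xi$ is the horizontal lift of one vector. Lemma~\ref{GeodesicLift} then shows that for small $r$ (staying inside the regular region), $x\mapsto\exp_x(r\xi)$ maps $P_{\gamma(t)}$ into $L_{\gamma(t+r)}$. This gives item~\ref{e1} for $t+r<t_0$.

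\textbf{Crossing the singular point.} Take $x\in P_{\gamma(t)}$ and join it to $\gamma(t)$ by a path $\theta\mapsto x_\theta$ in the plaque. Set $\gamma_\theta(s):=\exp_{x_\theta}((s-t)\xi)$, reparametrized so that it is centered at time $t_0$. By the regular case just established, these horizontal unit segments satisfy $\gamma_\theta(t_0-\delta')\in L_{\gamma(t_0-\delta')}$. By closedness of the leaves and a limit argument, $\gamma_\theta(t_0)\in L_{q_0}$ for all $\theta$.

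This is exactly the situation of hypotheses (i)--(iii) of Lemma~\ref{EquifocalMain}. That lemma yields $\gamma_\theta(t_0+r)\in L_{\gamma(t_0+r)}$ for small $r>0$. Beyond $t_0$ the leaves are again regular, so the submersion argument continues the inclusion up to $t_0+\delta$. Connectedness of $P_{\gamma(t)}$ then gives item~\ref{e1}.

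\textbf{Item~\ref{e2}.} We treat the two ranges of $t+r$ separately.

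\begin{enumerate}[label=\textup{(\alph*)}]
\item When $t+r\neq t_0$, the map $\eta_{r\xi}$ sends the regular plaque $P_{\gamma(t)}$ into a regular leaf. It is a local diffeomorphism between plaques, being the composition of holonomy-type maps of the Finsler submersion, so its rank equals $\dim L$.
\item When $t+r=t_0$, item~\ref{e1} gives $\eta_{r\xi}(P_{\gamma(t)})\subset L_{q_0}$. We then need constancy of the rank of a map into the singular leaf.
\end{enumerate}

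For case (b), write $\eta_{(t_0-t)\xi}$ as a composition. The first factor is $\eta_{(t_0-t-\epsilon)\xi}$, a diffeomorphism onto $P_{\gamma(t_0-\epsilon)}$. The second is the past-direction projection $P_{\gamma(t_0-\epsilon)}\to P_{q_0}$, which by \cite[Lemma~3.10]{Alexandrino2019singular} is a surjective submersion. A surjective submersion has constant rank $\dim L_{q_0}$, hence so does the composition.

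\textbf{Main obstacle.} The delicate step is identifying the map $x\mapsto\exp_x((t_0-t)\xi)$ with that metric projection. This requires knowing that the geodesic from $x$ hits $P_{q_0}$ $F$-orthogonally in the reversed sense. We expect to obtain this from the equidistance of leaves, specifically from the past cylinders $C^-_\epsilon(P_{q_0})$.
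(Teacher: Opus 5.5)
Your item~(i) follows the paper: you reduce to Lemma~\ref{EquifocalMain} with $\gamma(t_0)$ as the possibly singular point. Your cases $t+r<t_0$ and $t+r=t_0$ of item~(ii) coincide with the paper's first two cases. The step you single out as the main obstacle is actually harmless. Each geodesic $s\mapsto\exp_x(s\xi)$ is horizontal, so it meets $L_{\gamma(t_0)}$ orthogonally. For $t_0-t$ below the tube radius it is therefore exactly the segment that defines the past metric projection onto $P_{\gamma(t_0)}$.

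The genuine gap is the range $t_0<t+r<t_0+\delta$ of item~(ii), which you fold into case~(a). No Finsler submersion describes the foliation across the possibly singular leaf $L_{\gamma(t_0)}$, so $\eta_{r\xi}$ is not a composition of holonomy maps. Any factorization along $\gamma$ passes through $P_{\gamma(t_0)}$. The map $P_{\gamma(t)}\to P_{\gamma(t_0)}$ from your case~(b) has rank only $\dim L_{\gamma(t_0)}$, which may be smaller than the regular dimension. That the rank returns to the regular dimension after the crossing is the real content of this case, and it needs its own argument.

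The paper's argument runs as follows.
\begin{enumerate}
\item For $x\in P_{\gamma(t)}$, the future metric projection $\rho^{+}$ onto $P_{\gamma(t_0)}$ satisfies $\rho^{+}(\eta_{r\xi}(x))=\eta_{(t_0-t)\xi}(x)$. This again uses horizontality and uniqueness of short orthogonal geodesics.
\item The right-hand side is a submersion by your case~(b). Hence $\ker d\eta_{r\xi}$ is tangent to the fibres $L_{\gamma(t)}\cap S^{-}_{y}$, $y\in P_{\gamma(t_0)}$, of that submersion.
\item On such a fibre, $\eta_{r\xi}(x)=\exp_y\big((t+r-t_0)\,w(x)\big)$, where $w(x)\in T_yM$ is the velocity at $y$ of the geodesic issuing from $x$.
\item The map $x\mapsto w(x)$ is an immersion: it is the geodesic flow applied to the section $\xi$, with fixed base point $y$.
\item $\exp_y$ has no conjugate points on a small punctured ball, so the restriction of $\eta_{r\xi}$ to the fibre is an immersion.
\end{enumerate}
Therefore $\ker d\eta_{r\xi}=0$, and the rank is constantly the regular dimension. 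Without an argument of this kind, your item~(ii) is unproved precisely in the case that distinguishes singular from regular foliations.
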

\begin{proof}
Item~\ref{e1} follows directly from Lemma~\ref{EquifocalMain}.

To prove item~\ref{e2}, reduce the neighborhood $U$, if necessary, so that $L_{\gamma(t_0)}$ is the only (possibly) singular leaf among the leaves $L_{\gamma(t)}$ for $t \in (t_0 - \delta, t_0 )$. The existence of such neighborhood also follows from Lemma~\ref{EquifocalMain}.

Now we consider three cases.

First, suppose $r>0$ satisfies $t_0 - \delta < t + r < t_{0}$. In this case, $L_{\gamma (t + r)}$ is a regular leaf, and it follows from the theory of regular Finsler foliations that $\eta_{r\xi}:  P_{\gamma(t)} \to M$ is a diffeomorphism onto its image $L_{\gamma(t+r)}$. Hence, the dimension of the rank of $d \eta_{r\xi}: T P_{\gamma(t)} \to T M$ is always the dimension of the regular leaves.

Next, assume that $t + r = t_0$. From the past tubular structure of $L_{\gamma(t_0)}$, the map $\eta_{r\xi}:  P_{\gamma(t)} \to L_{\gamma(t_0)}$ is a submersion. Thus, the dimension of the rank of $d \eta_{r\xi}: T P_{\gamma(t)} \to T M$ is constant equal to the dimension of $L_{\gamma(t_0)}$.

Otherwise, $t_0 < t + r < t_0 + \delta$. As before, everywhere on the plaque, the dimension of the rank of $d \eta_{r\xi}: T P_{\gamma(t)} \to T M$ is the dimension of the regular leaves, by the claim that $\eta_{r\xi}: P_{\gamma(t)} \to M$ is again a diffeomorphism onto its image. To verify this fact, we begin by noticing that the future metric projection onto $P_{\gamma(t_0)}$ is a submersion. Then $\ker d\eta_{r \xi}$ is contained in the past slice $S^{-}_{\gamma(t_0)}$. But $\eta_{r\xi} \vert_{S_{\gamma(t_0)}^{-} \cap L_{\gamma(t)}}$ is a diffeomorphism onto its image, due to the absence of a conjugate point, with the (possible) exception of $\gamma(t_0)$. So $\dim \ker d\eta_{r\xi}=0$, and $\eta_{r\xi}:  P_{\gamma(t)} \to L_{\gamma(t+r)}$ is indeed a diffeomorphism.
\end{proof}

Finally, we can prove the equifocality of a regular leaf $L_q$.

\begin{proof}[Proof of Theorem~\ref{FinslerEquifocal}]

Let $P_q$ be the largest neighborhood in $L_q$ around $q$ such that the horizontal bundle $\nu L_q$ is trivial. In order to see it can be achieved, recall that the regular stratum is an open subset of $M$. Taking $U_{q}$ as the largest neighborhood of $q$ in which $\f\vert_{U_q}$ is described by a submersion, we define $P_q$ as the connected component of $ L_q\cap U_q$ containing $q$.

Consider the unit horizontal geodesic $t \mapsto \gamma(t) := \eta_{t\xi}(q)$. We employ the exact same argument presented in \cite[\S~3]{Alexandrino2008equifocality}. Roughly speaking, the proof follows by composing the ``small" endpoint maps from Lemma~\ref{EquifocalLemma} along the geodesic $\gamma$. For didactic purposes, let us briefly recall the argument.

Fixed $r>0$, let $\{U_i\}_{i=0}^{n}$ be a finite open covering of $\gamma_{[0,r]}$ by neighborhoods defined in Lemma~\ref{EquifocalLemma}; namely, $U_i$ is a neighborhood of $\gamma(t_i)$ satisfying Lemma~\ref{EquifocalLemma}, where $0 = t_0 < \cdots < t_n = r$. Then, using the lemma, we can find:
\begin{enumerate}[label=\textbf{\roman*.}]
\item a partition $\{ \delta_i \}_{i=1}^{n}$ of $[0,r]$ such that $t_{i-1} < \delta_i < t_i$ for $1 \leq i \leq n$;

\item open sets $P_i \subset L_{\gamma(\delta_i)}$ for $1 \leq i \leq n$, $P_{0} \subset L_q$, and $P_{n+1} \subset L_{\gamma(r)}$;

\item unit horizontal foliated vector fields $\xi_i$ along $P_{i}$ for $0 \leq i \leq n$; and

\item positive real numbers $\{r_{i}\}_{i=0}^{n}$;
\end{enumerate}
with the following properties:
\begin{itemize}
\item $P_{i} \subset U_{i-1} \cap U_{i}$ for $1 \leq i \leq n$;

\item  $\xi_i$ is tangent to $\gamma$ for $0 \leq i \leq n$;

\item $\eta_{r_i\xi_i}: P_{i} \to P_{i+1}$ is surjective for $0 \leq i \leq n$, and a diffeomorphism for $0 \leq i < n$;

\item $\eta_{r\xi} = \eta_{r_n\xi_{n}} \circ \eta_{r_{n-1}\xi_{n-1}} \circ \cdots \circ \eta_{r_{0}\xi_{0}}$.
\end{itemize}

The above construction ensures that the application $\eta_{r\xi}:P_{0}\to M$ meets the conditions of Definition~\ref{DefEquifocal}. By a connectedness argument, it is possible to show that the application $\eta_{r\xi}:P_q \to M$ also satisfies the conditions of Definition~\ref{DefEquifocal}; see \cite{Alexandrino2008equifocality} for more details. This concludes the proof.
\end{proof}
\section{Closure of a SFF: Proof of Proposition~\ref{MolinoSFF} }

\begin{proof}[Proof of Proposition~\ref{MolinoSFF}]

Since $\f$ is a singular Riemannian foliation (SRF) with respect to the Riemannian metric $\ga$ by assumption, it follows directly from \cite{Alexandrino2017closure} that $\overline{\f} = \{ \overline{L}_p \}_{p\in M}$ is a SRF. This is, in fact, the most complicated part of the demonstration.

Let $\Sigma$ be a fixed stratum of $\overline{\f}$. By Proposition~\ref{SFFstratCriterion}, it suffices to show $\overline{\f}\vert_{\Sigma}$ is a Finsler foliation. However, $\f\vert_{\Sigma}$ is a (regular) Finsler foliation on $\Sigma$, and as such it is locally described by Finsler submersions. Let $\pi: (U_1,F) \to (U_2,F_2)$ be a Finsler submersion that describes $\f\vert_{\Sigma}$ on a neighborhood $U_1 \subset M \cap \Sigma$. Now, it is enough to verify that $\overline{\f}_{U_1} := \overline{\f} \cap U_{1}$ is a Finsler foliation (SFF).

From the classical theory of foliations, if $q \in U_1$, then $\pi \big( L_q \cap U_1 \big)$ is an orbit of the pseudogroup of holonomy  $\mathrm{Hol}(\f\vert_{\Sigma})$ through $s := \pi(q)$. Since $\f\vert_{\Sigma}$ is a Finsler foliation, $\mathrm{Hol}(\f\vert_{\Sigma})$ is a pseudogroup of isometries for $(U_2,F_2)$.

Consider the average Riemannian metric on $U_2$, i.e. for each $s \in U_2$ and $u,w \in T_s U_2$,
\begin{equation*}
\g (u,w) := \int_{\mathcal{I}_s} \g_{v}^{F_2}(u,w)\, \vol_v ,
\end{equation*}
where $\mathcal{I}_s$ is the indicatrix of $F_2$ at $s$, and $\vol_v$ is the Riemannian volume on $T_v \mathcal{I}_s$ induced by the Riemannian metric $\g^{F_2}_v$. The average Riemannian metric $\g$ is invariant by isometries of $(U_2,F_2)$. Indeed, since $\g^{F_2}_v (u,w) = \g^{F_2}_{d\sigma_s(v)} (d\sigma_s(u), d\sigma_s(w))$ for any isometry $\sigma$ of $(U_2, F_2)$, we have that $\g (d\sigma_s(u), d\sigma_s(w)) = \g (u,w)$. In consequence, the pseudogroup $\mathrm{Hol}(\f\vert_{\Sigma})$ turns out to be a pseudogroup of isometries for $(U_2,\g)$. The fact that this transversal metric $\g$ is invariant under $\mathrm{Hol}(\f\vert_{\Sigma})$ makes it is possible (through standard arguments involving \emph{foliated atlases}) to construct a bundle like metric. As a result, we get a Riemannian metric for which $\f\vert_{\Sigma}$ is a Riemannian foliation.

From Salem \cite[Appendix~D, Proposition~2.3 and Theorem~3.1]{Molino1988riemannian}, we infer that $\overline{\mathrm{Hol}(\f\vert_{\Sigma})}$ is a Lie pseudogroup of isometries for $(U_2,\g)$. This allow us to find a set of vector fields $\{X_{i}\}$ whose orbits coincide with the orbits of the action of $\overline{\mathrm{Hol}(\f\vert_{\Sigma})}$ on $U_2$. For a dense set of parameters $t$, the flow $\mathrm{e}^{t X_{i}} \in \mathrm{Hol}(\f)$. Once $\mathrm{Hol}(\f)$ is a pseudogroup of isometries for $(U_2,F_2)$, we obtain that $\mathrm{e}^{t X_{i}}$ is an isometry of $(U_2,F_2)$ for every small $t$ (where the flow can be defined). So $\overline{\mathrm{Hol}(\f\vert_{\Sigma})}$ is a Lie pseudogroup of isometries for $(U_2,F_2)$.  Therefore, the partition of $U_2$ into orbits of the action of $\overline{\mathrm{Hol}(\f\vert_{\Sigma})}$ is a (regular) Finsler foliation of $(U_2,F_2)$. Together with the fact that $\pi:(U_1,F)\to (U_2,F_2)$ is a Finsler submersion, we conclude that $\overline{\f}_{U_1}$ is a Finsler foliation.
\end{proof}

\bibliography{references}

\end{document}